\newenvironment{enumerateA}{\begin{enumerate}[\bfseries\upshape(A)]}{\end{enumerate}}
\newtheorem{thm}{Theorem}[section]
\newtheorem{lem}[thm]{Lemma}
\newtheorem{prop}[thm]{Proposition}
\newtheorem{rem}[thm]{Remark}
\newtheorem{conj}[thm]{Conjecture}   
\newcommand{\tgo}{{\tilde{\omega}}}
\renewcommand{\le}{\leqslant} 
\renewcommand{\ge}{\geqslant}
\renewcommand{\leq}{\leqslant} 
\renewcommand{\geq}{\geqslant} 
\newcommand{\ra}{\rangle}
\newcommand{\la}{\langle}
\newcommand{\ind}{\mathds{1}}
\newcommand{\eps}{\varepsilon}
\newcommand{\abs}[1]{\left\vert#1\right\vert}
\newcommand{\ie}{\emph{i.e.,}}
\let\ga=\alpha \let\gb=\beta \let\gc=\gamma \let\gd=\delta 
     \let\gl=\lambda        \let\go=\omega  \let\gr=\rho \let\gs=\sigma  
  \let\gz=\zeta
 \let\gD=\Delta   
\let\gO=\Omega           
\newcommand{\cA}{\mathcal{A}}\newcommand{\cB}{\mathcal{B}}
\newcommand{\cL}{\mathcal{L}}
\newcommand{\cN}{\mathcal{N}}
\newcommand{\cP}{\mathcal{P}}
\newcommand{\cW}{\mathcal{W}}
\newcommand{\cZ}{\mathcal{Z}}  
\newcommand{\vB}{\mathbf{B}}
\newcommand{\vE}{\mathbf{E}}
\newcommand{\vP}{\mathbf{P}}
\newcommand{\vs}{\mathbf{s}}
\newcommand{\mvt}{\boldsymbol{t}}
\newcommand{\mvw}{\boldsymbol{w}}\newcommand{\mvx}{\boldsymbol{x}}
\newcommand{\bE}{\mathbb{E}}
\newcommand{\bN}{\mathbb{N}}
\newcommand{\bP}{\mathbb{P}}\newcommand{\bR}{\mathbb{R}}
\newcommand{\bV}{\mathbb{V}}
\newcommand{\bZ}{\mathbb{Z}}        
\newcommand{\dR}{\mathds{R}}
\newcommand{\dZ}{\mathds{Z}} 
\newcommand{\sA}{\mathscr{A}}
\newcommand{\sP}{\mathscr{P}}
\newcommand{\sS}{\mathscr{S}}
\DeclareMathOperator{\E}{\mathds{E}}
\DeclareMathOperator{\pr}{\mathds{P}}
\DeclareMathOperator{\sgn}{sgn}
\DeclareMathOperator{\var}{Var}
\newcommand{\dd}{\text{\rm d}}  
\begin{document}

\begin{frontmatter}
\title{High temperature limits  for $(1+1)$-dimensional directed polymer with heavy-tailed disorder.}
\runtitle{Heavy tailed random polymer}

\begin{aug}
\author{\fnms{Partha S.} \snm{Dey}\thanksref{m1}\ead[label=e1]{psdey@illinois.edu}\ead[label=u1,url]{http://math.uiuc.edu/$\sim$psdey/}},
\author{\fnms{Nikos} \snm{Zygouras}\thanksref{m2,t2}\ead[label=e2]{N.Zygouras@warwick.ac.uk}\ead[label=u2,url]{http://www2.warwick.ac.uk/fac/sci/statistics/staff/academic-research/zygouras/}}

\thankstext{t2}{Funded by EPSRC grant EP/L012154/1.}

\runauthor{Dey and Zygouras}

\affiliation{University of Illinois at Urbana-Champaign\thanksmark{m1} and University of Warwick\thanksmark{m2}}

\address{Department of Mathematics \\ University of Illinois at Urbana-Champaign\\ 1409 W.~Green St \\ Urbana, IL 61801 USA\\
\printead{e1}\\
\printead{u1}}
\address{Department of Statistics \\ University of Warwick\\ Gibbet Hill Road \\ Coventry, CV4 7AL, UK\\
\printead{e2}\\
\printead{u2}}
\end{aug}

\begin{abstract}
The directed polymer model at intermediate disorder regime was introduced by Alberts-Khanin-Quastel~\cite{AKQ12}. It was proved that at inverse temperature $\beta n^{-\gamma}$ with $\gamma=1/4$ the partition function, centered appropriately, converges in distribution and the limit is given in terms of the solution of the stochastic heat equation. This result was obtained under the assumption that the disorder variables posses exponential moments, but its universality was also conjectured under the assumption of six moments. We show that this conjecture is valid and we further extend it by exhibiting classes of different universal limiting behaviors in the case of less than six moments. We also explain the behavior of the scaling exponent for the log-partition function under different moment assumptions and values of $\gamma$.
\end{abstract}

\begin{keyword}[class=AMS]
\kwd[Primary ]{60F05, 82D60}
\kwd[; secondary ]{60G57, 60G70.}
\end{keyword}

\begin{keyword}
\kwd{Directed polymer}
\kwd{Phase transition}
\kwd{Heavy tail}
\kwd{Scaling limits.}
\end{keyword}

\end{frontmatter}

\section{Introduction}\label{sec:int}
\subsection{The model}
We consider the $(1+1)$-dimensional directed polymer in i.i.d.~random environment with high temperature. In particular, let $\gO:=\{\go_{v}\mid v\in\dZ^{2}\}$ be a collection of i.i.d.~random variables indexed by the vertices of $\dZ^{2}$. We will denote their joint law by $\bP$ and the corresponding expectation by $\bE$. Let $\vP_n(\cdot)$ be the measure corresponding to a nearest-neighbor simple random walk, starting at the origin at time $0$ and run up to time $n$. We will denote the set of nearest-neighbor paths by $\sS_0^n=\{((i,s_i))_{i=0}^n \mid s_{0}=0, \abs{s_{i}-s_{i-1}}=1, 1\le i\le n\}$) and by $\la\cdot\ra$ the expectation w.r.t.~$\vP_n$.  The energy of a path $\vs=((i,s_i))_{i=0}^n\in\sS_0^n$ is defined as 
$
H^\go(\vs):=\sum_{i=1}^{n} \go_{i,s_i}
$
and the polymer measure $\vP_{n,\gb_n}$ on $\sS_{0}^{n}$ is given by
\begin{align}
	\frac{\dd\vP_{n,\gb_n}}{\dd\vP_n}(\vs) = (Z^{\,\go}_{n,\gb_n})^{-1}\exp(\gb_n H^\go(\vs)), \quad \vs\in\sS_0^n,
\end{align}
where $Z^{\,\go}_{n,\gb_n}$ is the partition function
\begin{align}
	Z^{\,\go}_{n,\gb_n} :=\la\exp(\gb_n H^\go)\ra = 2^{-n}\sum_{\vs\in\sS_0^n} \exp(\gb_n H^\go(\vs))
\end{align}
and $\gb_n> 0$ is the inverse temperature that we will allow to depend on $n$. In particular, we will consider dependencies that will make $\gb_n$ go to zero, as $n$ tends to infinity, thus considering a {\it high temperature regime.}
The expectation with respect to the polymer measure $\vP_{n,\gb_n}$ will be denoted by $\vE_{n,\gb_n}$. 
\vskip 2mm
The directed polymer model was introduced in \cite{HH85} as a model for the interface of the two-dimensional Ising model with random interactions. Therein, numerical evidence was provided indicating interesting 
super diffusive fluctuation exponents of the interface. Soon after, a physical link to the fluctuation theory of Stochastic Burgers equation \cite{FNS77} was provided in \cite{HHF85}.
It is now well established that the directed polymer model is linked to the Kardar-Parisi-Zhang (KPZ) equation, since the logarithm of the partition function $Z^{\,\go}_{n,\gb_n}$ can be viewed as a discretization of the Hopf-Cole solution to the KPZ equation. Therefore, it provides a rigorous path to verify the predictions made by Kardar-Parisi-Zhang on the fluctuation exponents of the celebrated KPZ universality class.\vspace*{1mm}

In rough terms, the fluctuation exponent $\chi\in[0,1]$  characterizes the fluctuations of $\log Z^{\,\go}_{n,\gb_n}$ in the sense that
\[
\abs{ \log Z^{\,\go}_{n,\gb_n} - \E \log Z^{\,\go}_{n,\gb_n}}  \approx n^{\chi+o(1)}, \qquad \text{for}\,\,n\to\infty.
\]
The transversal exponent $\xi\in[0,1]$ characterizes the fluctuations of the end-point $(n,s_n)$ of a path $\vs\in\sS_0^n$ chosen from the measure $\vP_{n,\gb_n}$, that is
$$
 	\E\vE_{n,\gb_n}\abs{s_n}\approx n^{\xi+o(1)}, \qquad \text{for}\,\,n\to\infty.
$$
 So far, work in understanding the fluctuation exponents of the directed polymer has been constrained to the case where the random variables $\go$ have exponential moments. The prediction, when $\gb_n$ equals a constant $\gb$, is that $\chi=1/3$ and $\gz=2/3$. This has been confirmed, so far, only for the so-called log-gamma polymer, where $\exp(\,-\go)$ has gamma distribution \cite{S12}. In fact, in this case, the full scaling limit of the partition function, constrained to a fixed end point, e.g. $s_n=0$, is established to obey the Tracy-Widom GUE law, \cite{COSZ14}, \cite{OSZ14}, \cite{BCR13}. More precisely, if we centre the point-to-point partition function $\log Z_{n,\gb_n}^\go(0)$ by its free energy $f(\gb):=\lim_{n\to\infty} n^{-1}\log Z_{n,\gb_n}^\go(0)$ and scale with $c(\gb)n^{1/3}$ ($c(\gb)$ being a specific constant), then
  \begin{equation}\label{TWGUE}
  \frac{\log Z_{n,\gb}^\go(0) -n f(\gb)}{c(\gb) n^{1/3}} \xrightarrow[n\to\infty]{(d)} F_{GUE}.
  \end{equation}
 Besides the log-gamma polymer, similar scaling behavior has been established only for a handful of polymer models. Namely the continuum polymer (which is directly related to the Hopf-Cole solution of the KPZ equation) \cite{SS10}, \cite{ACQ11} and the O'Connell-Yor semi-discrete polymer \cite{O12}, \cite{BC14}. However, the scaling limit is conjectured to hold universally, independently of the particular distribution, as long as it possesses exponential moments. The only non-universal constants will be the free energy $f(\gb)$ and the scaling constant $c(\gb)$. In \cite{BBP07}, based on a Flory type argument, it is claimed that the $1/3,2/3$ exponents should be valid as long as disorder possesses more than five moments. Furthermore, based on numerical evidence, it is claimed therein, that the same Tracy-Widom limit theorem \eqref{TWGUE} should be valid. On the other hand, recent numerical studies \cite{GLBR14} have indicated that when disorder fails to have a fifth moment, then the $1/3, 2/3$ exponents should be replaced by exponents, which depend on the tails of the disorder. Furthermore, some guesses of the nature of the limit laws are presented, although a concrete guess is still elusive.
\vskip 2mm
 In \cite{AKQ10} and \cite{AKQ12} the notion of weak universality was introduced. In that work, the authors considered the case where $\beta_n=\beta n^{-1/4}$, i.e. a high temperature regime, and showed that
 under the assumption of exponential moments
 \begin{equation}\label{AKQeq}
 \log  Z^{\,\go}_{n,\gb_n} -n\gl(\gb_n) \xrightarrow[n\to\infty]{(d)} \log\cZ_{\sqrt{2}\gb},
 \end{equation}
   where $\cZ_{\sqrt{2}\gb}$ is the solution of the stochastic heat equation and $\gl(\gb_n):=\log \bE[e^{\gb_n \go}]$. Moreover, it was conjectured therein that the same limit behavior should be valid under only the assumption that disorder possesses more than six moments. Notice that in the case of finite moments $\gl(\gb_n)$ is not defined and therefore a different centering constant would be necessary.
\vskip 2mm
In this article we prove this conjecture. Moreover, motivated by a Flory type argument in \cite{BBP07}, we show that this conjecture is part of a larger picture. The latter is described by a phase diagram for the values of the exponents $(\chi,\xi)$ depending on $(\gc,\ga)$ where $\gb_{n}\approx \gb n^{-\gc}, \gc\ge 0$ and the disorder satisfies $\pr(\go>x)=x^{-\ga+o(1)}$, as $x\to\infty$ for  some $\ga>0$. Let us also mention that in  \cite{AKQ10}, based on Airy process heuristic, it was conjectured that for disorder with exponentially decaying tails, i.e. $\ga=\infty$,  and for $\gc\in[0,1/4]$ the scaling exponents should interpolate linearly between the Gaussian and the KPZ exponents like $\chi=(1-4\gc)/3,\xi=2(1-\gc)/3$. This conjecture, which also fits inside our picture, was recently proved in \cite{FSV14} for the stationary version of O'Connell-Yor, semi-discrete  polymer in Brownian environment. Another earlier work, which also fits the picture is \cite{AL11}, where the authors proved that when $\ga\in (0,2)$ and $\gc=2/\ga-1$ one has $\chi=\xi=1$ (see also \cite{HM07} for the corresponding zero-temperature result).
\vskip 2mm 
Roughly speaking, the picture we propose (see Figure~\ref{fig1}) can be described as follows: given an exponent $\xi\in [1/2,1]$ there exists in the $(\ga,\gc)$ diagram a ``level-curve" 
\begin{align*}
 \xi=
 \begin{cases}
 \displaystyle \frac{1+\ga(1-\gc)}{2\ga-1}  ,&\text{for} \quad \ga\le \frac{5-2\gc}{1-\gc} \\
 \displaystyle &\\
 \displaystyle  \frac{2(1-\gc)}{3}                ,&\text{for} \quad  \ga\ge \frac{5-2\gc}{1-\gc} 
\end{cases}
\end{align*}
along which the polymer in heavy tail disorder with ``$\ga$ moments" and at inverse temperature $\gb_n=\gb n^{-\gamma}$ has transversal fluctuation exponent $\xi$. We will present this in more detail in the next section. 
\vskip 2mm
In this article we prove the validity of this picture on the $\xi=1/2$ regime. In other words we rigorously identify the so called {\it weak disorder regime}, where the polymer behaves diffusively.  Moreover,  in this regime, we identify the scaling limit of the partition function and we see that three different scaling limits exist within three sub-regimes. The three different limit behaviors are related to the Hopf-Cole solution of KPZ, to Gaussian and to Poissonian, respectively for $\ga\geq 6, \ga\in(2,6)$ and $\ga\in (1/2,2)$. These different behaviors are linked to the impact of the `large' weights, which is greater the smaller $\alpha$ gets.
   Before presenting the theorems, let us present our assumptions that will be followed, throughout the article:
\begin{enumerateA}
\item\label{defA} The cumulative distribution function $F(x):=\pr(\go\le x)$ has regularly varying right tail. In other words, we assume that 
\[
	\bar{F}(x):=1-F(x)=x^{-\ga}L(x),\quad \text{ for all } \,x>0,
\] 
for some $\ga>0$ and some slowly varying function $L(x)$, i.e. for any $t>0$ it holds that $L(tx)/L(x)\to 1 $, as $x\to\infty$. Moreover, the choice of $L(\cdot)$ is such that $\bar{F}(x):=x^{-\ga}L(x)$ stays bounded, defining an honest cumulative distribution function.\vspace*{3mm}

\item\label{defB} When $\ga> 2$, we will also assume that 
\begin{align*}
\E[\go]=0,\quad\E[\go^2]=1.
\end{align*}

\item\label{defC} When $\ga\le 2$, we will also assume that 
\begin{align*}
F(-x)=(c_-+o(1))\bar{F}(x),\quad \text{ as } \,x\to\infty,
\end{align*}
for some $c_-\ge 0$. In other words, the left tail is dominated by the right tail when $\ga\le 2$.
\end{enumerateA}
\smallskip
For $t>1$, we  define 
\begin{align}\label{mt}
\text{ the real number }m(t):=\inf\{x\mid \bar{F}(x)\le 1/t\}.
\end{align}
Clearly, $m(t)=t^{1/\ga}L_{0}(t)$ as $t\to\infty$ for some slowly varying function $L_{0}$. For $x\in\dR$, define $x_{+}=\max\{x,0\}$ and $x_{-}=\max\{-x,0\}$.  
\vskip 2mm
We will prove the following:		
\vskip 2mm	
\begin{thm}[$\ga\ge 6,\gc\ge 1/4$]\label{thm:14}
	Assume that the weights satisfy assumptions~\eqref{defA} for some $\ga\ge 6$ and~\eqref{defB}.
	\begin{itemize}
	\item[$\bullet$]
	 Let $\gb_{n}$ be a sequence of real numbers with $\gb_{n}n^{1/4}\to \gb$ as $n\to\infty$, for some $\gb\in(0,\infty)$. Then 
	\[
		\log Z_{n,\gb_n}^{\go} - n\log\E\Big(e^{-\gb_n\go_-} + \sum_{i=1}^{4}\frac{\gb_n^i}{i!}\go_+^i \Big) \xrightarrow[n\to\infty] {(d)}\log \cZ_{\sqrt{2}\gb},
	\]
	where 
	\begin{align}\label{Wiener}
		\cZ_{\sqrt{2}\gb}:= 1+\sum_{k=1}^{\infty} (\sqrt{2}\gb)^k \int_{\gD_k} \int_{\dR^k} \prod_{i=1}^k 
		\gr(t_i-t_{i-1},x_{i}-x_{i-1}) W(\dd t_{i}\, \dd x_i ).
	\end{align}
	with $W(\dd t\,\dd x)$ is a white noise on $\dR_{+}\times \dR$ (formally, a Gaussian process with covariance  given by $\E[W(t,x) W(s,y) ]= \gd(t-s)\gd(x-y)$), $\gD_{k}=\{0=t_{0} < t_{1} < t_{2}<\cdots < t_{k} \le 1\}$ is the $k$-dimensional simplex, $x_{i}\in\dR$ with $x_{0}=0$, and $\rho$ is the standard Gaussian heat kernel
	\[
		\gr(t,x) = \frac{1}{\sqrt{2\pi t}} e^{-x^2/2t}, \qquad t\in(0,1), \,x\in\dR. 
	\]
	
\item[$\bullet$] If $\gb_{n}n^{1/4}\to 0$ as $n\to\infty$, then
\[
	\frac{1}{\gb_n n^{1/4}}\Bigl(\log Z_{n,\gb_n}^{\go} - n\log\E\Big(e^{-\gb_n\go_-} + \sum_{i=1}^3 \frac{\gb_n^i}{i!}\go_+^i \Big) \Bigr) \xrightarrow[n\to\infty]{(d)} \cN(0,2\pi^{-1/2}) ,
\]
where $\cN(0,2\pi^{-1/2})$ is the normal distribution with variance $2\pi^{-1/2}$.
\end{itemize}
\end{thm}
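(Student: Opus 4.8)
\medskip
\noindent\emph{Sketch of the proof.} The strategy is to run a polynomial chaos expansion in the spirit of \cite{AKQ12}, after first replacing the disorder by a version truncated above so as to neutralise the atypically large weights. I will fix an exponent $\theta$ with $\theta\in(\tfrac16,\tfrac14)$ when $\ga=6$ (and $\theta\in(\tfrac{3}{2\ga},\tfrac14)$ when $\ga>6$), put $b_n:=n^{\theta}$ and $\hat\go_{i,x}:=\go_{i,x}\ind\{\go_{i,x}\le b_n\}$ (leaving the left tail untouched), and set $\hat c_n:=\bE[e^{\gb_n\hat\go}]$, $\hat Z_n:=\la\prod_{i=1}^{n}e^{\gb_n\hat\go_{i,s_i}}\ra$. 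Since $\gb_n\hat\go_+\le\gb_n b_n=n^{-1/4+\theta}\to0$, the centred single-site weight $\xi_n(\hat\go):=e^{\gb_n\hat\go}/\hat c_n-1$ is \emph{uniformly bounded} in $n$, with variance $\gs_n^2:=\bE[\xi_n(\hat\go)^2]=(1+o(1))\gb_n^2$ (the linear term $\gb_n\go$ dominates, and $\bE[\go^2]=1$ by~\eqref{defB}), so $\bE[|\xi_n(\hat\go)|^p]\le C_p\,\gs_n^2$ for every $p\ge2$. Expanding the product over paths and conditioning on the visited sites yields the orthogonal decomposition
\[
\frac{\hat Z_n}{\hat c_n^{\,n}}=\sum_{k=0}^{n}\gTh^{(n)}_k,\qquad
\gTh^{(n)}_k:=\sum_{\substack{0=i_0<i_1<\dots<i_k\le n\\ x_0=0,\ x_1,\dots,x_k\in\dZ}}\Bigl(\prod_{j=1}^{k}q_{i_j-i_{j-1}}(x_j-x_{j-1})\Bigr)\prod_{j=1}^{k}\xi_n(\hat\go_{i_j,x_j}),
\]
where $q_m(\cdot)$ is the $m$-step simple random walk kernel; this writes $\hat Z_n/\hat c_n^{\,n}$ as a sum of multilinear forms of increasing degree in the i.i.d.\ array $\{\xi_n(\hat\go_{i,x})\}$.

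The first main step is to prove $\hat Z_n/\hat c_n^{\,n}\weakc\cZ_{\sqrt2\gb}$. Since the summands are uniformly small ($\max_x q_m(x)=O(m^{-1/2})$ and $\xi_n(\hat\go)$ bounded), a Lindeberg-type invariance principle --- replacing the bounded i.i.d.\ entries by Gaussians, after which each $\gTh^{(n)}_k$ becomes a Riemann-sum approximation to the $k$-th term of \eqref{Wiener} --- should give, for each fixed $k$, convergence of $\gTh^{(n)}_k$ to that term; the matching of normalisations is read off from the variance
\[
\bE\bigl[(\gTh^{(n)}_k)^2\bigr]=\gs_n^{2k}\sum_{0<i_1<\dots<i_k\le n}\prod_{j=1}^{k}q_{2(i_j-i_{j-1})}(0)\ \xrightarrow[n\to\infty]{}\ \frac{\gb^{2k}}{\gC(k/2+1)},
\]
using $q_{2m}(0)\sim(\pi m)^{-1/2}$ and $\gb_n^2\sqrt n\to\gb^2$; the right-hand side is exactly the variance of the $k$-th Wiener-chaos term, and the constant $\sqrt 2$ is produced by the local central limit theorem $q_m(x)\sim 2\,\gr(m,x)$ on the parity sublattice. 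The same identity, with $q_{2m}(0)\le Cm^{-1/2}$ and a Dirichlet-type bound on the simplex sum, gives $\bE[(\gTh^{(n)}_k)^2]\le C\,r^k/\gC(k/2+1)$ uniformly in $n$ (with $r=r(\gb)$), so $\sup_n\sum_{k>K}\bE[(\gTh^{(n)}_k)^2]\to0$ as $K\to\infty$; combined with the term-by-term convergence this upgrades to $\hat Z_n/\hat c_n^{\,n}\weakc\cZ_{\sqrt2\gb}$, in fact in $L^2$. Since $\cZ_{\sqrt2\gb}>0$ almost surely (it solves the stochastic heat equation with a strictly positive initial datum), $\log$ is a.s.\ continuous at the limit, whence $\log\hat Z_n-n\log\hat c_n\weakc\log\cZ_{\sqrt2\gb}$.

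It then remains to remove the truncation and reconcile the centring. The centring is matched directly: splitting $\bE[e^{-\gb_n\go_-}+\sum_{i=1}^{4}\tfrac{\gb_n^i}{i!}\go_+^i]-\hat c_n$ over $\{\go\le0\}$, $\{0<\go\le b_n\}$, $\{\go>b_n\}$ gives the three contributions $0$, a fifth-order Taylor remainder $O(\gb_n^5\bE[\go_+^5])$ (finite because $\ga\ge6>5$), and $O(\gb_n^4\bE[\go_+^4\ind\{\go>b_n\}])=O(\gb_n^4 b_n^{4-\ga}L(b_n))$, and $n$ times each tends to $0$, so $n\bigl(\log\bE[e^{-\gb_n\go_-}+\sum_{i=1}^{4}\tfrac{\gb_n^i}{i!}\go_+^i]-\log\hat c_n\bigr)\to0$. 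For the truncation one must show $\log Z^{\go}_{n,\gb_n}-\log\hat Z_n\probc0$: writing $Z^{\go}_{n,\gb_n}-\hat Z_n=\la\bigl(\prod_{i:\,\go_{i,s_i}>b_n}e^{\gb_n\go_{i,s_i}}-1\bigr)\prod_i e^{\gb_n\hat\go_{i,s_i}}\ra$ and factorising the pinned partition functions at the large-weight sites, one expects, on the high-probability event that no disorder value in the relevant space-time box exceeds $B_n:=n^{1/4}(\log n)^{1-\delta}$, a bound $\tfrac{Z^{\go}_{n,\gb_n}}{\hat Z_n}-1\lesssim e^{\gb_n B_n}\sum_{(i,x):\,\go_{i,x}>b_n}q_i(x)$, whose right-hand side is $n^{o(1)}\,O_{\bP}(n\,\bar F(b_n))=O_{\bP}(n^{\,1-\ga\theta+o(1)})\probc0$ because $\ga\theta>1$. \textbf{This is the step I expect to be the main obstacle}: it is genuinely tight at $\ga=6$ (the ``six moments'' threshold of the statement), and making it rigorous should require a dyadic decomposition over the weight scales in $(b_n,B_n]$ and over the time index, together with concentration estimates for the pinned partition functions; when $\ga>6$ it simplifies, since $b_n$ can be chosen so that no large weights occur in the box at all, whence $Z^{\go}_{n,\gb_n}=\hat Z_n$ with probability tending to $1$. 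Chaining with the first step proves the first bullet.

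For the second bullet ($\gb_n n^{1/4}\to0$) the same apparatus applies, now with the degree-$3$ truncated exponential: the degree-$4$ Taylor coefficient contributes $O((\gb_n n^{1/4})^4)=o(\gb_n n^{1/4})$ and may be dropped from the centring, but the cubic one may not. Here $\bE[(\gTh^{(n)}_k)^2]\sim(\gb_n n^{1/4})^{2k}/\gC(k/2+1)\to0$ for every $k\ge1$, so $\hat Z_n/\hat c_n^{\,n}\to1$, and since $\sum_{k\ge2}\gTh^{(n)}_k$ has standard deviation $O((\gb_n n^{1/4})^2)$,
\[
\log\frac{\hat Z_n}{\hat c_n^{\,n}}=\log\Bigl(1+\sum_{k\ge1}\gTh^{(n)}_k\Bigr)=\gTh^{(n)}_1+o_{\bP}(\gb_n n^{1/4}).
\]
The first chaos $\gTh^{(n)}_1=\sum_{i=1}^{n}\sum_x q_i(x)\,\xi_n(\hat\go_{i,x})$ is a sum of independent centred terms with total variance $\gs_n^2\sum_{i=1}^n q_{2i}(0)\sim\gb_n^2\cdot\tfrac{2}{\sqrt\pi}\sqrt n=\tfrac{2}{\sqrt\pi}\,(\gb_n n^{1/4})^2$, and its Lyapunov ratio is $O(n^{-c})$ for some $c>0$ (using $\max_x q_i(x)=O(i^{-1/2})$ and $\bE[|\xi_n(\hat\go)|^{2+\delta}]\le C\gb_n^{2+\delta}$), so $\gTh^{(n)}_1/(\gb_n n^{1/4})\weakc\cN(0,2\pi^{-1/2})$. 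Dividing the last display by $\gb_n n^{1/4}$ and invoking the quantitative ($o(\gb_n n^{1/4})$) forms of the truncation and centring estimates from the previous paragraph then gives the second bullet.
\medskip
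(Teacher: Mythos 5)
Your overall architecture is the same as the paper's: truncate the disorder from above, prove convergence of the truncated, recentred partition function via a polynomial chaos expansion (the paper invokes Theorem~\ref{AKQ-CSZ}, i.e.\ the AKQ/CSZ framework, rather than redoing a Lindeberg argument by hand, but the content is identical, including the variance computation producing the $\sqrt 2$ and the $1/\gC(k/2+1)$), reconcile the centring by Taylor/tail estimates (the paper's Lemma~\ref{lem:comp}), and for the second bullet isolate the first chaos and apply the CLT (the paper's Propositions~\ref{trun-conv} and~\ref{truncated-gauss}). Those components of your sketch are sound modulo bookkeeping.

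The genuine gap is exactly where you flag it, but your proposed repair would fail as written. The event that no disorder value in the relevant space--time region exceeds $B_n=n^{1/4}(\log n)^{1-\gd}$ is \emph{not} of high probability: over the full accessible box $[0,n]\times[-n,n]$ the number of exceedances is of order $n^{2}\bar{F}(B_n)=n^{2-\ga/4+o(1)}$, i.e.\ about $n^{1/2}$ of them at $\ga=6$, and the true maximum there is of order $n^{2/\ga}=n^{1/3}\gg n^{1/4}$. The event only becomes high-probability after restricting to a box of width $O(\sqrt n)$, and then the entire contribution of paths exiting that box --- precisely where the $n^{1/3}$-sized weights sit --- is left uncontrolled in your sketch. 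Controlling it is the heart of the paper's proof: Lemma~\ref{lem:compare} and Proposition~\ref{prop:bound} run an energy--entropy competition over a hierarchy of boxes $\vB_j=[0,n]\times(-h_j,h_j)$, comparing the excess energy $\gb_n M_j=\gb_n\sum_{v\in\vB_j}\go_v\ind_{\go_v>k_n}$ against the exit cost $h_{j-1}^2/2n$ from Lemma~\ref{Feller}, with finitely many scales for $\ga>6$ and logarithmically many at $\ga=6$. The decomposition that closes the argument is thus over \emph{spatial} scales, not over weight scales and the time index as you propose. Moreover, even inside the $\sqrt n$-box your inequality $Z^{\go}_{n,\gb_n}/\hat Z_n-1\lesssim e^{\gb_n B_n}\sum_{\go_{i,x}>b_n}q_i(x)$ tacitly replaces the truncated polymer measure by the free walk measure and ignores paths collecting several large sites; the paper instead controls $\vP^{\tgo}_{n,\gb_n}(\cB_{j-1}^c)$ directly, using the a priori positivity \eqref{posi-est} (verified a posteriori from the chaos limit) together with Chebyshev bounds on $Z^{\tgo}_{n,\gb_n}(\cA)/\E[Z^{\tgo}_{n,\gb_n}]$.

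A smaller but related issue is your truncation level. With $b_n=n^{\theta}$, $\theta<1/4$, already the box of width $\sqrt n$ contains $n^{3/2}\bar{F}(b_n)=n^{3/2-\ga\theta+o(1)}\to\infty$ truncated sites, so you are forced to argue quantitatively about many in-bulk exceedances. The paper truncates at $k_n\approx n^{1/4}$ (slightly \emph{above} the typical maximum $m(n^{3/2})$ over that box, with a $(\log n)^{\eta/\ga}$ boost at $\ga=6$), so that with high probability nothing is truncated inside $\vB_1$ and $Z^{\go}_{n,\gb_n}(\cB_1)=Z^{\tgo}_{n,\gb_n}(\cB_1)$ exactly; the price is that one must then verify uniform integrability of $(\zeta^{(n)})^2$ for the chaos theorem with this larger cutoff, which is where the hypothesis $\ga\ge 6$ (and Lemma~\ref{lem:comp}) enters on that side of the argument.
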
	
	\vskip 2mm
	Let us remark that $\cZ_{\sqrt{2}\gb}$, which here is given in terms of a Wiener chaos expansion, is in fact the mild solution to the one dimensional stochastic heat equation, with flat initial condition
	\begin{equation*}
\begin{cases}
\partial_t  u =\frac{1}{2}\Delta \, u +\sqrt{2}\gb \, \dot W\,u & \\
u(0, \cdot ) =1&
\end{cases} \,.
\end{equation*}
When $\alpha\leq 6$, we observe the following behavior
\begin{thm}[$2<\ga\le 6, \gc \ge 3/2\ga$ ]\label{thm:gauss}
Assume that the weights satisfy assumption~\eqref{defA} for some $\ga\in(2,6]$ and \eqref{defB}. Let $\gb_{n}$ be a sequence of real numbers such that $\gb_{n} m(n^{3/2})\to \gb$ as $n\to\infty$, for some $\gb\ge 0$. For $\ga=6$, we also assume that {$\gb_nn^{1/4}\to 0$.} Then 
\begin{align*}
	\frac{1}{\gb_n n^{1/4}}\Bigl(\log Z_{n,\gb_n}^{\go} - n\log\E\Big(e^{-\gb_n\go_-} + \sum_{i=1}^2 \frac{\gb_n^i}{i!}\go_+^i & + \frac{\gb_n^3}{3!}\go_+^3\ind_{\ga>3}\Big) \Bigr)
	 \xrightarrow[n\to\infty]{(d)} \cN(0,2\pi^{-1/2}) .
\end{align*}
\end{thm}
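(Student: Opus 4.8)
\emph{Strategy and set-up.} The plan is to run the polynomial-chaos scheme underlying Theorem~\ref{thm:14}: truncate the disorder, expand the (truncated) partition function into ``chaoses'' indexed by the number of marked space--time vertices, show that the first chaos --- a linear statistic of the disorder --- dominates, and conclude by a Lindeberg CLT. Put $k:=2+\ind_{\ga>3}\in\{2,3\}$ (so $k<\ga$, which makes the centering finite) and $\gf_k(x):=e^{-x_-}+\sum_{i=1}^{k}x_+^{\,i}/i!$, so that the subtracted quantity is exactly $n\log\bE[\gf_k(\gb_n\go)]$. The hypothesis $\gb_n m(n^{3/2})\to\gb$ forces $\gb_n n^{1/4}\to0$ when $\ga<6$ (and this is part of the assumption when $\ga=6$); write $t_n:=\gb_n^2 n^{1/2}\to0$ and $u(\ell):=\sum_{x\in\dZ}p_\ell(x)^2\sim(\pi\ell)^{-1/2}$, $p$ the SRW kernel, whence $\sum_{i=1}^n u(i)\sim2\pi^{-1/2}n^{1/2}$.

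\emph{Step 1: truncation --- the main obstacle.} Fix $\eps\in(0,1)$, put $b_n:=\eps/\gb_n\to\infty$, $Y_v:=\go_v\wedge b_n$, and let $Z_n^{(\eps)}:=\la\prod_{i=1}^n e^{\gb_n Y_{i,s_i}}\ra$ with associated polymer expectation $\vE_n^{(\eps)}$. I would first establish
\[
	\log Z_{n,\gb_n}^{\go}-\log Z_n^{(\eps)}=\log\vE_n^{(\eps)}\Big[\exp\Big(\gb_n\sum_{i=1}^n(\go_{i,s_i}-b_n)_+\Big)\Big]=o_{\pr}\big(\gb_n n^{1/4}\big).
\]
In the diffusive box $\{(i,x):1\le i\le n,\ |x|\le C\sqrt n\}$ the number of vertices with $\go_v>b_n$ is $O_{\pr}(\eps^{-\ga})$, their typical size being $\asymp m(n^{3/2})\asymp b_n$. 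Using the weak-disorder comparison of $\vP_n^{(\eps)}$ with the simple random walk (valid since $t_n\to0$), $\bE\vE_n^{(\eps)}\big[\gb_n\sum_i(\go_{i,s_i}-b_n)_+\big]\lesssim\gb_n n\,\bE[(\go-b_n)_+]\asymp\eps^{1-\ga}\gb^{\ga}n^{-1/2}=o(\gb_n n^{1/4})$, the last step exactly because $\gb_n m(n^{3/2})\to\gb$ gives $\gc<3/4$ for $\ga>2$; one must also discard the rare atypically large weights and those sitting at atypically small times $i$ by a union bound over the box, where $\ga>2$ again enters the exponent arithmetic. Isolating in this way the effect of the heavy tail is the hard part; the remaining steps are, in substance, the argument behind the $\gb_n n^{1/4}\to0$ regime of Theorem~\ref{thm:14}.

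\emph{Step 2: chaos expansion.} Since $0\le e^{\gb_n Y_v}\le e^{\eps}$, the number $\gl_n:=\log\bE[e^{\gb_n Y}]$ is finite; put $\bar\gh_v:=e^{\gb_n Y_v-\gl_n}-1\in[-1,e^{\eps}]$, which is centred. Expanding $\prod_i(1+\bar\gh_{i,s_i})$ over subsets of $\{1,\dots,n\}$ and using the Markov property of the walk,
\[
	Z_n^{(\eps)}e^{-n\gl_n}=1+\sum_{m\ge1}q_n^{(m)},\qquad q_n^{(m)}:=\sum_{\substack{0<i_1<\cdots<i_m\le n\\ x_1,\dots,x_m\in\dZ}}\prod_{j=1}^m p_{i_j-i_{j-1}}(x_j-x_{j-1})\,\bar\gh_{i_j,x_j},
\]
with $(i_0,x_0)=(0,0)$. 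As the $\bar\gh_v$ are i.i.d.\ and centred, telescoping the space sums gives $\bE[(q_n^{(m)})^2]=\var(\bar\gh)^m\sum_{0<i_1<\cdots<i_m\le n}\prod_{j}u(i_j-i_{j-1})$; with $u(\ell)\le C\ell^{-1/2}$ and the Dirichlet identity $\int_{\gD_m}\prod_j(u_j-u_{j-1})^{-1/2}\,\dd u=\pi^{m/2}/\gC(1+m/2)$ this is $\sim(\var(\bar\gh)\,n^{1/2})^m/\gC(1+m/2)$. Since $\var(\bar\gh)=\gb_n^2\,\bE[Y^2]+o(\gb_n^2)=\gb_n^2(1+o(1))$ (from $\bE[\go^2]=1$ and $b_n\to\infty$), the $m=1$ term is $\sim2\pi^{-1/2}\gb_n^2 n^{1/2}$ while $\sum_{m\ge2}\bE[(q_n^{(m)})^2]=O(t_n^2)=o(t_n)$. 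Hence $q_n:=\sum_{m\ge1}q_n^{(m)}=q_n^{(1)}+o_{L^2}(\gb_n n^{1/4})$, and since $q_n\to0$ in $L^2$, $\log Z_n^{(\eps)}-n\gl_n=\log(1+q_n)=q_n^{(1)}+o_{\pr}(\gb_n n^{1/4})$.

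\emph{Step 3: linearisation, CLT, and the centering constant.} Write $\bar\gh_v=(\gb_n Y_v-\gl_n)+R_v$, where $|R_v|\le C(\gb_n^2 Y_v^2+\gl_n^2)$ if $\gb_n|Y_v|\le1$ and $|R_v|\le C(\gb_n|Y_v|+1)$ if $\gb_n Y_v<-1$. A moment estimate gives $\bE[R_v^2]=o(\gb_n^2)$ --- this uses $\ga>2$ for the right tail and $\bE[\go^2]<\infty$ for the left, so that $\gb_n^4\,\bE[\go^4\ind_{|\go|\le b_n}]=o(\gb_n^2)$. Consequently $\sum_{(i,x)}p_i(x)(R_{i,x}-\bE R)=o_{\pr}(\gb_n n^{1/4})$ (its squared $L^2$-norm is $\le\bE[R_v^2]\sum_{(i,x)}p_i(x)^2=o(\gb_n^2 n^{1/2})$), and since $\sum_x p_i(x)=1$ and $\bE\bar\gh_v=0$ the remaining deterministic terms cancel, leaving $q_n^{(1)}=\gb_n\sum_{(i,x)}p_i(x)\bar Y_{i,x}+o_{\pr}(\gb_n n^{1/4})$ with $\bar Y_v:=Y_v-\bE Y$ i.i.d., mean $0$. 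It then remains to prove
\[
	\frac{1}{n^{1/4}}\sum_{(i,x)}p_i(x)\,\bar Y_{i,x}\ \xrightarrow[n\to\infty]{(d)}\ \cN\big(0,2\pi^{-1/2}\big),
\]
a Lindeberg CLT for the triangular array $\{p_i(x)\bar Y_{i,x}\}$: the normalisation is $\sum_{(i,x)}p_i(x)^2=\sum_{i\le n}u(i)\sim2\pi^{-1/2}n^{1/2}$, $\var(\bar Y)\to\var(\go)=1$, and the Lindeberg condition holds because $\max_{i,x}p_i(x)\le\tfrac12$ and $\bE[\go^2\ind_{|\go|>M}]\to0$ (since $|p_i(x)\bar Y_{i,x}|>\gd n^{1/4}$ forces $|\bar Y_{i,x}|>2\gd n^{1/4}$, the truncated second moment is at most $\bE[\bar Y^2\ind_{|\bar Y|>2\gd n^{1/4}}]\,n^{-1/2}\sum_{(i,x)}p_i(x)^2\to0$). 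Finally, $n\big(\gl_n-\log\bE[\gf_k(\gb_n\go)]\big)=n\,\bE\big[e^{\gb_n Y}-\gf_k(\gb_n\go)\big]+O\big(n(\cdots)^2\big)$; estimating the Taylor remainder $\bE\big[(e^{\gb_n\go}-\gf_k(\gb_n\go))\ind_{0<\go\le b_n}\big]\lesssim\gb_n^{k+1}\bE[\go^{k+1}\ind_{\go\le b_n}]$ and the $\{\go>b_n\}$ contribution, and using $\gb_n m(n^{3/2})\to\gb$ (and $\gb_n n^{1/4}\to0$ when $\ga=6$), one obtains $n\big(\gl_n-\log\bE[\gf_k(\gb_n\go)]\big)=o(\gb_n n^{1/4})$. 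Combining Steps~1--3 and dividing by $\gb_n n^{1/4}$ yields the theorem.
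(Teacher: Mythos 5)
Your overall architecture — truncate the disorder, expand the truncated partition function into chaoses, show the linear chaos dominates because $\gb_n n^{1/4}\to0$, conclude by a Lindeberg CLT, and separately match the centering constant via a Taylor expansion of the truncated log-moment generating function — is exactly the paper's (Proposition~\ref{truncated-gauss} together with Lemma~\ref{lem:comp}), and your Steps~2 and~3 are essentially sound versions of that argument. The problem is Step~1, which is where the entire difficulty of the theorem is concentrated (it occupies all of Section~\ref{sec:trunc} of the paper), and your sketch there does not go through as written. Three concrete issues. First, with the cap $b_n=\eps/\gb_n$ the excess $\gb_n(\go-b_n)_+$ has a polynomial tail against an exponential, so $\bE\big[e^{\gb_n(\go-b_n)_+}\big]=+\infty$; hence $\bE\,\vE_n^{(\eps)}[e^{X}]=+\infty$ and no first-moment/Markov bound on $\vE_n^{(\eps)}[e^X]$ is available. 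Relatedly, Jensen gives $\log\vE_n^{(\eps)}[e^X]\ \ge\ \vE_n^{(\eps)}[X]$, i.e.\ the inequality you need points the wrong way; passing from an expectation of $X$ to a bound on $\log\vE[e^X]$ requires first removing the very large weights at a level $k_n$ with $n^{3/2}\bar F(k_n)\to0$ (so that with high probability none occur in the relevant box), which is precisely why the paper truncates at $k_n=\gb_n^{-1}m(n^{3/2}(\log n)^{\eta})/m(n^{3/2})\gg \gb_n^{-1}$ rather than at $O(\gb_n^{-1})$. Second, your estimate lives entirely inside the diffusive box $|x|\le C\sqrt n$ and never controls the contribution of paths that exit it to collect large weights far away; for $\ga$ close to $2$ (where $\gc=3/(2\ga)$ approaches $3/4$) this energy–entropy competition is delicate and is handled in the paper by the multi-scale iteration of Proposition~\ref{prop:bound}, Case~3, over a logarithmically growing number of shells $h_j=\sqrt n(\log n)^{\gd_j}$, together with the positivity input \eqref{posi-est} on $Z^{\tgo}_{n,\gb_n}$. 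Third, the interchange $\bE\,\vE_n^{(\eps)}\big[\gb_n\sum_i(\go_{i,s_i}-b_n)_+\big]\lesssim\gb_n n\,\bE[(\go-b_n)_+]$ treats the polymer measure as if it were independent of the excess weights; it is not ($Y_v$ and $(\go_v-b_n)_+$ are functions of the same $\go_v$), and the invoked ``weak-disorder comparison of $\vP_n^{(\eps)}$ with the simple random walk'' is itself an unproved lemma. Until Step~1 is replaced by an argument of the strength of the paper's Proposition~\ref{prop:bound}, the proof is incomplete at its central point.
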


To state our result for $\ga\in(\frac12,2)$, we need to consider a Poisson point process $\cP$ on $(w,t,x)\in \sS:=\dR\times [0,1]\times \dR$ with intensity measure $\eta(\dd w \dd t \dd x) =\frac12\ga |w|^{-1-\ga}(\ind_{w>0}+c_-\ind_{w<0})\dd w \dd t \dd x$. Recall that $\rho$ is the standard Gaussian heat kernel
$
\gr(t,x) = (2\pi t)^{-1/2} \exp(-x^2/2t), x\in \dR, t\in [0,1]. 
$ 
We define the random variables
\begin{align*}
\cW_{0}^{(\ga)} =
\begin{cases}
\qquad\qquad\qquad\qquad\qquad\qquad\quad\ \int_{\sS} w \rho(t,x) (\cP-\eta)(\dd w \dd t \dd x) &\text{ if } \ga\in (1,2)\\
\int_{\sS\cap\{ |w|>1\}} w \rho(x,t) \cP(\dd w \dd t \dd x)  +\int_{\sS\cap\{ |w|\le 1\}} w \rho(t,x) (\cP-\eta)(\dd w \dd t \dd x) &\text{ if } \ga=1\\
\int_{\sS} w\rho(t,x)\cP(\dd w \dd t \dd x) &\text{ if } \ga\in (0,1).
\end{cases}
\end{align*}
and for $\gb\in(0,\infty)$,
\[
\cW_{\gb}^{(\ga)} 
= \frac{1}{\gb} \int_{\sS} (e^{\gb w}-1-\gb w)\rho(t,x)\cP(\dd w \dd t \dd x) + \cW_{0}^{(\ga)}.
\]
The random variables $\cW_{\gb}^{(\ga)} $ are well defined, as stated in the following lemma, whose proof is given in Appendix~\ref{sec:aux}:

\begin{lem}\label{PoissonInt}
For every $\gb\ge 0, \ga\in(1/2,2)$, the random variables $\cW_{\gb}^{(\ga)}$ are finite a.s. Moreover, $\cW_{0}^{(\ga)}$ has stable distribution with characteristic function given by 
\[
\E^{\cP}\exp(iy\cW_{0}^{(\ga)})=
\begin{cases}
 \exp\Big(\int_{\sS} (e^{iy w \rho(t,x)}-1-iy w \rho(t,x)) \,\, \eta(\dd w \dd x\dd t)\,\Big) & \text{ if } \ga\in (1,2) \\
\exp\Big(\int_{\sS\cap \{|w|>1 \} } (e^{iy w \rho(t,x)}-1)  \eta(\dd w \dd x\dd t)                                    & \\
\qquad\qquad+ \int_{\sS \cap \{|w|\le 1\}} (e^{iy w \rho(t,x)}-1-iy w \rho(t,x)) \,\, \eta(\dd w \dd x\dd t)\,\Big)   &\text{ if } \ga=1 \\
\exp\Big(\int_{\sS} (e^{iy w \rho(t,x)}-1)  \eta(\dd w \dd x\dd t)\,\Big) & \text{ if } \ga\in (0,1) ,   
\end{cases}
\]
for $y\in\dR$, where $\E^{\cP}$ denotes expectation with respect to the Poisson process $\cP$. 
\end{lem}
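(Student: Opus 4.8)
The plan is to reduce everything to the classical theory of integrals of deterministic functions against Poisson random measures (the L\'evy--It\^o / Campbell-type criteria). Recall that if $\cP$ is a Poisson point process on a space with $\sigma$-finite intensity $\eta$ and $f$ is measurable, then $\int f\,\dd\cP$ converges absolutely a.s.\ iff $\int (|f|\wedge 1)\,\dd\eta<\infty$, and $\int f\,(\dd\cP-\dd\eta)$ converges a.s.\ (as an $L^2$ plus compensated-sum limit) iff $\int (|f|^2\wedge|f|)\,\dd\eta<\infty$; in either case the characteristic function is given by the exponential L\'evy--Khintchine formula displayed in the statement. So the entire lemma is the verification of these two finiteness conditions for the three integrands appearing in $\cW_\gb^{(\ga)}$, namely $g(w,t,x)=w\rho(t,x)$ (on the appropriate truncation of $w$, depending on whether $\ga\in(1,2)$, $=1$, or $\in(0,1)$) and $h(w,t,x)=\tfrac1\gb(e^{\gb w}-1-\gb w)\rho(t,x)$.

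First I would dispose of the spatial/temporal part. Since $\rho(t,x)=(2\pi t)^{-1/2}e^{-x^2/2t}$, one has $\int_{\dR}\rho(t,x)\,\dd x=1$ and $\int_{\dR}\rho(t,x)^p\,\dd x = c_p\, t^{-(p-1)/2}$ for $p>0$, which is integrable in $t\in[0,1]$ precisely when $(p-1)/2<1$, i.e.\ $p<3$. This is the origin of the constraint $\ga<2$: the relevant exponent of $\rho$ in the convergence criteria will turn out to be $<3$ exactly in that range. Concretely, for the ``small $w$'' (compensated) piece I would bound $\int (|g|^2\wedge|g|)\,\dd\eta$ by splitting the $w$-integral at $|w|=1/\rho(t,x)$; using $\int_0^{1/\rho} w^2\, w^{-1-\ga}\dd w \asymp \rho^{\ga-2}$ for $\ga\in(1,2)$ (and $\int_{1/\rho}^\infty w\cdot w^{-1-\ga}\dd w\asymp \rho^{\ga-1}$ for the tail, relevant when $\ga\le1$), one is left with $\int_0^1 t^{-(\ga-2)/2}\,\dd t<\infty$ since $(\ga-2)/2<0$, and similarly $\int_0^1 t^{-(\ga-1)/2}\dd t<\infty$ since $(\ga-1)/2<1/2$. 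For $\ga=1$ the logarithmic borderline term from splitting the $w$-integral must be tracked, but it is still $t$-integrable. The ``large $w$'' uncompensated pieces (present for $\ga\le1$) need only $\int(|g|\wedge1)\dd\eta<\infty$, which follows from the same bookkeeping. For the nonlinear term $h$, note $|e^{\gb w}-1-\gb w|\le \tfrac12\gb^2 w^2 e^{\gb|w|}$ near $w=0$ and $\le e^{\gb w}$ for $w>0$ large, $\le 1+\gb|w|$ for $w<0$; since the L\'evy measure $\tfrac12\ga|w|^{-1-\ga}(\ind_{w>0}+c_-\ind_{w<0})$ has no mass issue at $0$ (it integrates $w^2$ there for $\ga<2$) and the right tail decays like $|w|^{-1-\ga}$ against the factor $e^{\gb w}\rho(t,x)$ — here the key point is that $\rho(t,x)\le (2\pi t)^{-1/2}$ so $e^{\gb w \rho}$ never blows up in $w$ for $w>0$ only through $\rho\le$ const, wait: actually $h$ contains $e^{\gb w}$ not $e^{\gb w\rho}$, so one genuinely needs $\int^\infty e^{\gb w} w^{-1-\ga}\dd w = \infty$ — but this is multiplied by $\rho(t,x)$, and the integrand entering the criterion is $|h|\wedge 1 = |\tfrac1\gb(e^{\gb w}-1-\gb w)\rho|\wedge 1$, which for fixed $w$ large is $\le 1$ and for the $w$-integral at fixed $(t,x)$ behaves like $\int (e^{\gb w}\rho \wedge 1) w^{-1-\ga}\dd w$: splitting at $w^* = w^*(t,x)$ where $e^{\gb w^*}\rho=1$, the contribution is $\asymp \int_{w^*}^\infty w^{-1-\ga}\dd w + \rho\int^{w^*} e^{\gb w}w^{-1-\ga}\dd w \asymp (w^*)^{-\ga}$, and $w^*\asymp \tfrac1\gb\log(1/\rho)\asymp \tfrac1\gb\log(1/\sqrt t)$, so one needs $\int_0^1 (\log(1/t))^{-\ga}\dd t<\infty$, which holds. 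I would present this bound cleanly rather than through the parenthetical groping above.

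The main obstacle — and the place to be careful — is the interplay between the singularity of $\rho(t,x)$ as $t\downarrow0$ and the heavy tail of $\eta$ in $w$: both the small-$w$ and large-$w$ regimes must be traded off against the $t$-integrability of powers of $\rho$, and the three cases $\ga\in(1,2)$, $\ga=1$, $\ga\in(0,1)$ require slightly different truncations (full compensation, mixed, none) to stay on the correct side of the convergence criterion. The cases $\ga=1$ is the genuinely delicate one because of the logarithmic borderline. Once the two finiteness conditions are verified in all cases, the a.s.\ finiteness of $\cW_0^{(\ga)}$ and $\cW_\gb^{(\ga)}$ follows immediately from the general Poisson-integral theory, and the stated characteristic function of $\cW_0^{(\ga)}$ is just the L\'evy--Khintchine exponent for a (compensated, compensated-plus-uncompensated, or uncompensated) Poisson integral of the pushforward of $\eta$ under $(w,t,x)\mapsto w\rho(t,x)$; writing the change of variables out gives exactly the three displayed formulas. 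I would close by noting that stability of $\cW_0^{(\ga)}$ then follows since the exponent is $\ga$-homogeneous after the obvious scaling, or equivalently can be read off directly from the L\'evy--Khintchine form.
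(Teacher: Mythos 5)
Your overall framework --- reduce everything to the standard integrability criteria $\int(|f|\wedge 1)\,\dd\eta<\infty$ (uncompensated) and $\int(|f|^2\wedge|f|)\,\dd\eta<\infty$ (compensated), then read the characteristic function off the L\'evy--Khintchine formula --- is the same as the paper's, and your treatment of the linear terms $w\rho(t,x)$ near $w=0$ (where $\ga<2$ is the binding constraint) is essentially correct, modulo some loose bookkeeping of the powers of $t$ (both pieces of the split at $|w|=1/\rho$ reduce to $\int_0^1\int_{\dR}\rho^{\ga}\,\dd x\,\dd t\asymp\int_0^1 t^{(1-\ga)/2}\dd t$, not $\int_0^1 t^{-(\ga-2)/2}\dd t$, though the conclusion is unaffected).

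There is, however, a genuine gap in your treatment of the exponential term $\tfrac1\gb(e^{\gb w}-1-\gb w)\rho(t,x)$ for large $w>0$, and it sits exactly where the hypothesis $\ga>1/2$ must be used --- a hypothesis your argument never invokes, which is itself a warning sign. Splitting the $w$-integral of $(e^{\gb w}\rho\wedge 1)\,w^{-1-\ga}$ at $w^*$ with $e^{\gb w^*}\rho=1$ correctly gives a contribution $\asymp(w^*)^{-\ga}$ with $w^*\asymp\gb^{-1}\log(1/\rho(t,x))$; but you then replace $\log(1/\rho(t,x))$ by $\log(1/\sqrt t)$, discarding the $x$-dependence, and reduce to $\int_0^1(\log(1/t))^{-\ga}\dd t<\infty$, which holds for \emph{every} $\ga>0$. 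That cannot be right: the bound $\rho(t,x)\le(2\pi t)^{-1/2}$ yields an upper bound on $(w^*)^{-\ga}$ that does not decay in $x$, so its integral over $x\in\dR$ is infinite. The correct computation keeps $\log(1/\rho)=\tfrac12\log(2\pi t)+x^2/2t$, so that for large $|x|$ one integrates $(x^2/2t)^{-\ga}$ over $x\in\dR$, which converges iff $2\ga>1$. This is precisely how the paper proceeds: it isolates the set $\sA_1=\{|w|\ge 1+x^2/4\gb t\}$, shows $\eta(\sA_1)=8\int_0^\infty\int_0^1\sqrt t\,(1+x^2/\gb)^{-\ga}\dd x\,\dd t<\infty$ exactly when $\ga>1/2$ (so only finitely many Poisson points land there, making those integrals trivially finite a.s.), and on the complementary region $\sA_2=\{1\le|w|<1+x^2/4\gb t\}$ uses $e^{\gb w}\rho(t,x)\le e^{\gb}(2\pi t)^{-1/2}e^{-x^2/4t}$ to get an $L^1(\eta)$ bound. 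You need to repair your large-$w$ estimate along these lines; as written that step fails and the role of $\ga>1/2$ is never accounted for.
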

One can explicitly evaluate the integrals in the exponent to calculate the characteristic function of  $\cW_{0}^{(\ga)}$, as presented in the proof of Lemma~\ref{PoissonInt}. We can now state the result in the regime $\ga\in(1/2,2)$
\begin{thm}[$1/2<\ga<2, \gc\ge 3/2\ga$]\label{thm:heavy}
Assume that the weights satisfy~\eqref{defA} for some $\ga\in(1/2,2)$ and~\eqref{defC} for some $c_-\ge 0$. Assume that $\E[\go]=0$ when $\ga>1$. Let $\gb_{n}$ be a sequence of real numbers such that $\gb_{n} m(n^{3/2}) $ converges to $\gb\in [0,\infty)$, as $n\to\infty$. Then 
\[
	\frac{\sqrt{n}}{\gb_n m(n^{3/2})}\Bigl(\log Z_{n,\gb_n}^{\go} - n\gb_n\E\big[\go\ind_{|\go|\le m(n^{3/2})}\big]\,\ind_{\ga=1}\Bigr) \xrightarrow[n\to\infty]{(d)} 2\cW_{\gb}^{(\ga)} .
\]
\end{thm}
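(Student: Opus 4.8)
Since $m(n^{3/2})\to\infty$ we have $\gb_n\to0$, so put $a_n:=\gb_n m(n^{3/2})/\sqrt n$; the quantity in the statement is $a_n^{-1}$ times a centred logarithm. The plan is to expand $Z^{\go}_{n,\gb_n}$ into a polynomial chaos series, keep only its first term, and then match the bulk of the disorder to a stable law and the few exceptionally large weights to a Poisson point process. Expanding $e^{\gb_n\go_{i,x}}=1+(e^{\gb_n\go_{i,x}}-1)$ in $Z^{\go}_{n,\gb_n}=\la\prod_{i=1}^n e^{\gb_n\go_{i,s_i}}\ra$ gives
\[
Z^{\go}_{n,\gb_n}=1+\sum_{k\ge1}\sZ_k,\qquad
\sZ_k=\sum_{0<i_1<\dots<i_k\le n}\ \sum_{x_1,\dots,x_k}\ \prod_{j=1}^k\bigl(e^{\gb_n\go_{i_j,x_j}}-1\bigr)\,\prod_{j=1}^k q_{i_j-i_{j-1}}(x_j-x_{j-1}),
\]
with $i_0=x_0=0$ and $q_m(\cdot)=\vP_n(s_m=\cdot)$ the $m$-step simple random walk kernel; the first term is $\sZ_1=\sum_{v=(i,x)}(e^{\gb_n\go_v}-1)\,q_v$, $q_v:=q_i(x)$.

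\textbf{Step 1: reduction to $\sZ_1$.}
I would first show that $\sZ_1=O(a_n)$ in probability (for $\ga=1$, that $\sZ_1$ minus the stated centering $n\gb_n\E[\go\ind_{\abs\go\le m(n^{3/2})}]$ is $O(a_n)$ in probability) and $\sum_{k\ge2}\abs{\sZ_k}=o(a_n)$ in probability. Since $Z^{\go}_{n,\gb_n}>0$ and $\log(1+y)=y+O(y^2)$ with $y=\sum_{k\ge1}\sZ_k\to0$ in probability, this yields $\log Z^{\go}_{n,\gb_n}=\sZ_1+o(a_n)$ in probability, and the theorem reduces to identifying $\lim a_n^{-1}\sZ_1$. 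To control $\sum_{k\ge2}\abs{\sZ_k}$ I would split each $\sZ_k$ by the number of indices $i_j$ carrying a weight of modulus $>\eps m(n^{3/2})$: there are only $O(1)$ such ``large'' vertices (in probability), any path through two of them contributes a factor $q_{v}q_{v'-v}=O(n^{-1})$ (a product of two transition probabilities, each $O(n^{-1/2})$), and the contribution of the remaining, bulk, weights is controlled by second-moment estimates using $\E[(e^{\gb_n\go}-1)^2\ind_{\abs\go\le\eps m(n^{3/2})}]=O(\gb_n^2 m(n^{3/2})^2 n^{-3/2})$ and $\sum_v q_v^2=O(\sqrt n)$; this makes the bulk part of $\sZ_k$ of order at most $(Ca_n)^k$, hence summable and $o(a_n)$ for $k\ge2$, while the terms with one or more large vertices are $o(a_n)$ by the extra factor $O(n^{-1/2})$ (one large) or $O(n^{-1})$ (two large) they carry.

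\textbf{Step 2: the Poisson point process and the limit of $\sZ_1$.}
Let $\Pi_n:=\sum_{v=(i,x)}\gd_{(\go_v/m(n^{3/2}),\,i/n,\,x/\sqrt n)}$ on $\sS=\dR\times[0,1]\times\dR$. Assumption~\eqref{defA} (and~\eqref{defC} for the left tail when $\ga\le2$), the definition~\eqref{mt} of $m$, and the local central limit theorem $\sqrt n\,q_{\lfloor nt\rfloor}(\lfloor\sqrt n\,x\rfloor)\to2\gr(t,x)$ together give $\Pi_n\weakc\cP$, the Poisson point process of Lemma~\ref{PoissonInt} (the two factors $1/2$ in $\eta$ and the factor $2$ in the local limit theorem both come from the parity of the walk). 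Now write $\sZ_1=\gb_n\sum_v\go_v q_v+\sum_v(e^{\gb_n\go_v}-1-\gb_n\go_v)q_v$. For the linear part, $a_n^{-1}\gb_n\sum_v\go_v q_v=\tfrac{\sqrt n}{m(n^{3/2})}\sum_v\go_v q_v$ is a weighted sum of i.i.d.\ regularly varying variables with deterministic coefficients $q_v$ satisfying $\max_v q_v\asymp n^{-1/2}$ and $\sum_v q_v=n$; using $\E[\go]=0$ for $\ga>1$, resp.\ the truncated-mean centering $n\gb_n\E[\go\ind_{\abs\go\le m(n^{3/2})}]=\gb_n\E[\go\ind_{\abs\go\le m(n^{3/2})}]\sum_v q_v$ for $\ga=1$, the classical stable limit theorem together with $\sqrt n\,q_v\to2\gr$ gives convergence to $2\cW_0^{(\ga)}$ in the form of Lemma~\ref{PoissonInt}. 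For the nonlinear part, the summand at a vertex with $\go_v=m(n^{3/2})w_v$ equals $\tfrac{2}{\gb_n m(n^{3/2})}\bigl(e^{\gb_n m(n^{3/2})w_v}-1-\gb_n m(n^{3/2})w_v\bigr)\gr(i_v/n,x_v/\sqrt n)$ up to a negligible local-CLT error, and since $\gb_n m(n^{3/2})\to\gb$ and $w\mapsto(\gb_n m(n^{3/2}))^{-1}(e^{\gb_n m(n^{3/2})w}-1-\gb_n m(n^{3/2})w)\to\gb^{-1}(e^{\gb w}-1-\gb w)$ uniformly on compact $w$-sets, Step~2 and the continuous mapping theorem give convergence to $\tfrac2\gb\int_{\sS}(e^{\gb w}-1-\gb w)\gr(t,x)\,\cP(\dd w\,\dd t\,\dd x)$. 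Both limits are functionals of the same $\cP$, so the convergence is joint, and the two pieces sum to $2\cW_\gb^{(\ga)}$.

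\textbf{Main obstacle.}
The hard part---underlying every ``negligible'' above---is the uniform control of large weights sitting far out in space, at $\abs x\gg\sqrt n$. Within $\{\abs x\le R\sqrt n\}$ the largest available weight is of order $R^{1/\ga}m(n^{3/2})$, so it contributes a factor $\exp(\gb R^{1/\ga})$ against the Gaussian cost $\gr\asymp\exp(-cR^2)$ of a path reaching that site; these combine to a summable quantity exactly when $1/\ga<2$, which is the origin of the hypothesis $\ga>1/2$ and of the a.s.\ finiteness asserted in Lemma~\ref{PoissonInt} (and it is what allows $\Pi_n\weakc\cP$ on the non-compact $\sS$ to be promoted to convergence of the integrals against $\gr$ and against $(e^{\gb\cdot}-1-\gb\cdot)\gr$). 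Converting this heuristic into the quantitative truncation estimates needed to (a) discard $\{\abs{\go_v}>\eps m(n^{3/2})\}$ from the linear part, (b) let $\eps\downarrow0$ in the nonlinear part---here one uses $\ga<2$, so that $\int_{\abs w<\eps}w^2\abs w^{-1-\ga}\,\dd w\to0$ and, after dividing by $a_n$, the small-$w$ contribution is $O(\eps^{2-\ga})$---and (c) bound $\sum_{k\ge2}\abs{\sZ_k}$ uniformly, together with summing the local-CLT error over the $\asymp n^{3/2}$ relevant weights, is the bulk of the technical work.
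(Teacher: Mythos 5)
Your strategy is essentially the paper's: a polynomial chaos expansion, reduction to the first-order term, and identification of its limit through the point-process convergence of the rescaled field $(\go_v/m(n^{3/2}), i/n, x/\sqrt n)$, with the same decomposition into a compensated linear part and a nonlinear part $(e^{\gb w}-1-\gb w)$ and the same $\eps^{2-\ga}$ control of small weights. The organization differs in one substantive way: the paper first truncates the environment at level $k_n=\gb_n^{-1}m(n^{3/2}(\log n)^\eta)/m(n^{3/2})$ via the multiscale energy--entropy argument of Proposition~\ref{prop:bound}, and only then expands the \emph{truncated} partition function, so that the entire remainder $R_n=\sum_{k\ge2}\sZ_k$ is killed by a single $L^2$ bound; you instead propose to expand the untruncated partition function and split each chaos order by the number of large weights. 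What the paper's order of operations buys is precisely the ability to use second moments on everything that is left.

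Two points in your version are genuine gaps rather than deferred technicalities. First, ``there are only $O(1)$ large vertices'' holds only in a window $|x|\le K\sqrt n$; over the reachable strip $[0,n]\times[-n,n]$ the number of sites with $|\go_v|>\eps m(n^{3/2})$ is of order $n^{1/2}$, and for the untruncated field $\E[(e^{\gb_n\go})^2]=\infty$ (the tail of $e^{\gb_n\go}$ decays only logarithmically), so the large-vertex portions of $\sZ_k$, $k\ge2$, cannot be handled by any moment bound. You do state the correct energy--entropy heuristic ($e^{\gb R^{1/\ga}}$ versus $e^{-cR^2}$, summable iff $\ga>1/2$), but turning it into a proof is exactly the content of Proposition~\ref{prop:bound} and Lemma~\ref{Kinfty}, which is where most of the work lies; your proposal leaves this as an acknowledged obstacle. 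Second, you obtain the linear part's limit $2\cW_0^{(\ga)}$ from a classical weighted stable limit theorem and then assert joint convergence with the Poissonian nonlinear part because ``both limits are functionals of the same $\cP$''; marginal convergence plus a common representation of the limits does not give joint convergence. The paper avoids this by deriving \emph{both} pieces from the single point-process convergence (large weights via $\cP$, small weights compensated and shown to vanish in $L^2$ uniformly in $n$ as $\eps\downarrow0$), which is the route you should take as well.
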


\begin{rem}
In the case of $\ga=2$, i.e. $\bar{F}(x)=x^{-2}L(x)$ for some slowly varying function $L(\cdot)$ at inifinity, one can prove that for $\gb_{n}$ ``sufficiently small'' there exists  a sequence $a_{n}$ such that
\begin{align}\label{alpha2}
a_{n}\Bigl(\log Z_{n,\gb_n}^{\go} -  n\log\E(e^{-\gb_n\go_-} + \gb_n\go_+ )\Bigr) \xrightarrow[n\to\infty]{(d)} \cN(0,\gs^{2})
\end{align}
for some constant $\gs^{2}>0$. In particular, define the function $H(x)=\E[\go^{2}\ind_{\{|\go|\le x\}}]$ for $x>0$, which is slowly varying at infinity. Define $\hat{m}(t):=\inf\{x\mid x^{-2}H(x)\le 1/t\}$ for $t>1$. Under the assumption that $$
\gb_{n}\hat{m}(n^{3/2})\to\gb \text{ as } n\to\infty \text{ for some } \gb\in[0,\infty)
$$ 
one can prove that \eqref{alpha2} holds with $c_{n}=\sqrt{n}/\big(\gb_{n}\hat{m}(n^{3/2})\big)$. The proof combines techniques used in the proof of Theorem~\ref{thm:heavy} and the method used in the proof of Theorem~4.17 in Kallenberg~\cite{K02} regarding the central limit behaviour of renormalised sums of random variables that barely fail to have second moment. Since this is technical, we prefer to omit the details for simplicity.
\end{rem}

For the sake of completeness we mention the limiting behavior in the region {$0<\ga<2, \gamma\leq 2/\ga-1 $,} which has been proved in~\cite{AL11} and~\cite{HM07}. Let $\cP$ be a Poisson Process on $[0,\infty)\times[-1,1]\times[0,1]$ with intensity one. Let $\cP=\{(w_i,x_i, t_i): i\ge 1\}$ be the points in the point process with decreasing values of $w_i$. Let $\cL$ be the set of all real-valued Lipschitz function on $[0,1]$ with Lipschitz constant one and vanishing at $0$. The entropy of a curve $\gc\in\cL$ is $-E(\gc)$ where
\[
E(\gc):=\int_{0}^{1}e(\gc'(x))dx
\]
and $e:[-1,1]\to\dR$ is defined as 
\[
e(x)=\frac12\Big((1+x)\log(1+x)+(1-x)\log(1-x)\Big).
\]
Define the energy of a curve $\gc\in\cL$ as 
\[
\pi(\gc)=\sum_{(w_i,x_{i},t_{i})\in\cP\,:\, \gc(t_{i})=x_{i}} w_{i}^{-1/\ga}.
\]
\
\begin{thm}[\cite{HM07,AL11}]
Assume that the weights satisfy  $\pr(\go>x)=x^{-\ga}L(x)$ for some $\ga\in (0,2)$ and some slowly varying function $L(x)$. Let $\gb_{n}$ be a sequence of real numbers such that $\frac1nm(n^2)\gb_{n} \to \gb$  as $n\to\infty$ for some $\gb\in(0,\infty)\cup\{\infty\}$. Then 
\[
	\frac{1}{m(n^2) \gb_n }\log Z_{n,\gb_n}^{\go}\, \xrightarrow[n\to\infty]{(d)} \,\,\sup_{\gc\in\cL} \Big(\pi(\gc) - \frac{1}{\gb}E(\gc)\Big).
\]
\end{thm}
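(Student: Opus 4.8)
\medskip\noindent\emph{Sketch of proof} (following \cite{HM07,AL11}).

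\textbf{Rescaling, heuristic, and the point process of large weights.} Set $a_n:=m(n^2)\gb_n$, so the hypothesis reads $a_n/n\to\gb$, equivalently $n/a_n\to1/\gb$ (with $1/\gb:=0$ when $\gb=\infty$). Rescale a lattice site $(i,s)$ to $(i/n,s/n)$, which lands in the cone $\gL:=\{(t,x):0\le t\le1,\ |x|\le t\}$ of sites reachable by a length-$n$ walk from the origin, and rescale a weight $\go_{i,s}$ to $\go_{i,s}/m(n^2)$. In this regime $\log Z^\go_{n,\gb_n}$ is governed by walks that thread a bounded number of exceptionally large weights: writing $Z^\go_{n,\gb_n}=\la e^{\gb_n H^\go}\ra$ and applying Laplace's principle, $\log Z^\go_{n,\gb_n}\approx\max_{\vs}\big(\gb_n H^\go(\vs)+\log\la\ind_{\vs'\text{ near }\vs}\ra\big)$; if a walk collects large weights of rescaled sizes $w_1^{-1/\ga},\dots,w_k^{-1/\ga}$ at rescaled positions $(t_1,x_1),\dots,(t_k,x_k)$ then $\gb_n H^\go(\vs)=a_n\sum_{j}w_j^{-1/\ga}(1+o(1))$, while Stirling's formula ($2^{-m}\binom{m}{(m+d)/2}=\exp(-m\,e(d/m)+o(m))$, the same $e$ as in the statement) gives $2^{-n}\#\{\text{walks through the corresponding sites}\}=\exp(-nE(\gc)+o(n))$ for $\gc\in\cL$ the polyline through $(0,0),(t_1,x_1),\dots,(t_k,x_k)$. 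Dividing by $a_n\asymp n$ produces $\sup_{\gc\in\cL}(\pi(\gc)-\gb^{-1}E(\gc))$. To make this precise, fix a level $\gd>0$ and let $\cP_n^{(\gd)}$ record $(\go_{i,s}/m(n^2),(i/n,s/n))$ over the sites with $\go_{i,s}>\gd\,m(n^2)$; since $\bar F(m(n^2)u)\sim u^{-\ga}/n^2$ by regular variation and the number of reachable sites is of order $n^2$, the classical weak convergence of empirical point processes of i.i.d.\ regularly varying variables gives that $\cP_n^{(\gd)}$ converges in distribution to $\cP$ restricted to $\{w^{-1/\ga}>\gd\}$ (after the change of variables $u=w^{-1/\ga}$ turning the limiting intensity $\propto\ga u^{-\ga-1}\,\dd u$ into unit intensity in $w$). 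In particular a.s.\ only finitely many points of $\cP$ satisfy $w_i^{-1/\ga}>\gd$, and $N_\gd:=\#\{i:w_i^{-1/\ga}>\gd\}$ and their locations are continuous functionals of the configuration.

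\textbf{Upper bound.} Decompose $H^\go(\vs)=H^{>}_\gd(\vs)+\bar H_\gd(\vs)+H^{<}_\gd(\vs)$ into the sums over sites of $\vs$ with weight $>\gd m(n^2)$, with $|\text{weight}|\le\gd m(n^2)$, and with weight $<-\gd m(n^2)$; the last is $\le0$ and may be dropped in the upper bound. The key uniform bound is that there is $\theta(\gd)\to0$ (as $\gd\downarrow0$) with
\[
\pr\Big(\max_{\vs\in\sS_0^n}\gb_n\,\big|\bar H_\gd(\vs)-n\,\E[\go\ind_{|\go|\le\gd m(n^2)}]\big|\le\theta(\gd)\,a_n\Big)\longrightarrow1 ,
\]
the centering term being $o(a_n)$ for each fixed $\gd$. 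This follows from a Fuk--Nagaev / Bernstein-type tail bound for sums of the truncated variables $\go_i\ind_{|\go_i|\le\gd m(n^2)}$, sharp enough to absorb the $2^n$ union bound over walks: for $\ga\in(1,2)$ the ``medium'' weights (comparable to but below $\gd m(n^2)$) carry the sub-Gaussian part and must be tracked explicitly, while for $\ga\le1$ the truncated sum along any single walk is already $o(m(n^2))$. Consequently only $H^{>}_\gd$ matters up to $\theta(\gd)a_n$: a walk may collect any subset $I\subseteq\{1,\dots,N_\gd\}$ of the surviving large weights, contributing $a_n\sum_{j\in I}w_j^{-1/\ga}(1+o(1))$, and the number of length-$n$ walks through the relevant sites is $2^n e^{-nE_n(I)+o(n)}$ with $E_n(I)\to E(\gc_I)$, $\gc_I$ the polyline through those points and the origin. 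Summing over the $O(2^{N_\gd})$ subsets and using $n/a_n\to1/\gb$,
\[
\frac{1}{a_n}\log Z^\go_{n,\gb_n}\ \le\ \max_{I}\Big(\sum_{j\in I}w_j^{-1/\ga}-\tfrac{n}{a_n}E_n(I)\Big)+\theta(\gd)+o(1)\ \xrightarrow[n\to\infty]{(d)}\ \sup_{\gc\in\cL}\big(\pi(\gc)-\gb^{-1}E(\gc)\big)+\theta(\gd),
\]
where the identification of the limit uses that replacing any $\gc\in\cL$ by the polyline through the points it visits leaves $\pi(\gc)$ unchanged and does not increase $E(\gc)$ (convexity of $e$ and Jensen), so the supremum over such ``skeleton'' curves equals the supremum over all of $\cL$. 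Let $\gd\downarrow0$.

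\textbf{Lower bound and the limit law.} Given $\eps>0$, use the skeleton reduction to pick a polyline $\gc^*$ through finitely many points of $\cP$ with $w_i^{-1/\ga}>\gd$ that attains $\sup_{\gc\in\cL}(\pi(\gc)-\gb^{-1}E(\gc))$ up to $\eps$. Restrict the partition-function sum to walks passing through lattice approximations of these points and avoiding the (asymptotically density-zero) sites of weight $<-\gd m(n^2)$; there are $\ge2^n e^{-nE(\gc^*)-o(n)}$ of them by the local CLT, each has $\gb_n H^{>}_\gd(\vs)=a_n\pi(\gc^*)(1+o(1))$ and, by the two-sided form of the estimate above, $\gb_n\bar H_\gd(\vs)\ge-\theta(\gd)a_n-o(a_n)$; here the standing hypothesis that the left tail is no heavier than the right (cf.~\eqref{defC}) controls the negative part. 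Hence $\frac1{a_n}\log Z^\go_{n,\gb_n}\ge\pi(\gc^*)-\gb^{-1}E(\gc^*)-\theta(\gd)-o(1)\ge\sup_{\gc\in\cL}(\pi(\gc)-\gb^{-1}E(\gc))-\eps-\theta(\gd)-o(1)$; let $\gd\downarrow0$ then $\eps\downarrow0$. Combining with the point-process convergence (passing to a subsequence along which $\cP_n^{(\gd)}\to\cP$ a.s.\ via Skorokhod) yields the asserted convergence in distribution to $\Phi(\cP):=\sup_{\gc\in\cL}(\pi(\gc)-\gb^{-1}E(\gc))$. This is an a.s.\ finite random variable: a Lipschitz-$1$ curve can chain only of order $\sqrt N$ of any $N$ generically scattered high-reward points, so the series of collectable rewards at geometric scales converges precisely because $\ga<2$; for $\gb=\infty$ this finiteness (and the whole statement) is \cite{HM07}, and it is here that the borderline regime $\gb_n\asymp n/m(n^2)$ is essential.

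\textbf{Main difficulty.} The crux is the uniform control, over the exponentially many walks, of the contribution of all but the few largest weights: one must truncate at a small constant multiple of $m(n^2)$ and prove a concentration estimate for the truncated sums strong enough to survive a $2^n$ union bound, which for $\ga\in(1,2)$ genuinely requires handling the ``medium'' weights and the residual centering, and for $\ga\le1$ requires excluding that a walk chains too many mid-sized weights. Making the entropy asymptotics $2^{-n}\#\{\text{walks through }k\text{ prescribed rescaled sites}\}=e^{-nE(\gc)+o(n)}$ uniform as the sites range over a compact subset of $\gL$ (dealing with parity and lattice corrections) is the remaining technical ingredient.
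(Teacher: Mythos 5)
This theorem is stated in the paper without proof---it is quoted from \cite{HM07} and \cite{AL11} ``for the sake of completeness''---so there is no internal argument to compare your sketch against. Your outline faithfully reproduces the strategy of those references (Poissonian convergence of the rescaled extreme weights, truncation at level $\gd\, m(n^2)$ together with a concentration estimate for the medium weights strong enough to survive the union bound over $2^n$ paths, and the energy--entropy variational problem over Lipschitz skeletons with rate function $E(\gc)$), and it correctly isolates the genuinely difficult step, namely the uniform control of the truncated sums along all walks.
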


\subsection{Heuristics}\label{heuristic}

We will now describe the heuristics and the picture (see Figure~\ref{fig1}) that gives the phase diagram for scaling exponents depending on $\gc$ and $\ga$ when $\bar{F}(x)= x^{-\ga+o(1)}, x\gg1$.  We divide the $(\gc,\ga)\in (0,\infty)^2$ plane into regions with different exponents behavior. We define the following regions 
\begin{align*}
\setlength\arraycolsep{0.01em}
\begin{array}{rlrl}
	R_1 &:= \big\{(\gc,\ga): \gc>\frac14, \gc\ge\frac{3}{2\ga} \big\},
	&R_2 &:= \big\{(1/4,\ga): \ga \geq 6 \big\}, \\[2mm]
	R_3 &:=\big\{(\gc,\ga): 0<\gc<1/4, \ga \ge \frac{5-2\gc}{1-\gc}\big\}, 
	&R_4 &:= \big\{(0,\ga): \ga> 5 \big\},\\[2mm]
	R_5 & := \big\{(\gc,\ga): \ga> 1/2,  \max\{0,\frac{2}{\ga}-1, \frac{\ga-5}{\ga -  2}  \} < \gc < \frac{3}{2\ga} \big\}, \\[2mm]
	R_6 &:= \big\{(\gc,\ga):  0< \ga < 2, \gc= \frac{2}{\ga}-1 \big\} 
	\qquad\quad \text{     and }  
	&R_7 &:=  \big\{(\gc,\ga): 0< \ga < 2, 0 \le \gc< \frac{2}{\ga}-1 \big\}.\\[2mm]
\end{array}
\end{align*}

\begin{figure}[hftb]
	\centering                                                        
	\includegraphics[height=70mm]{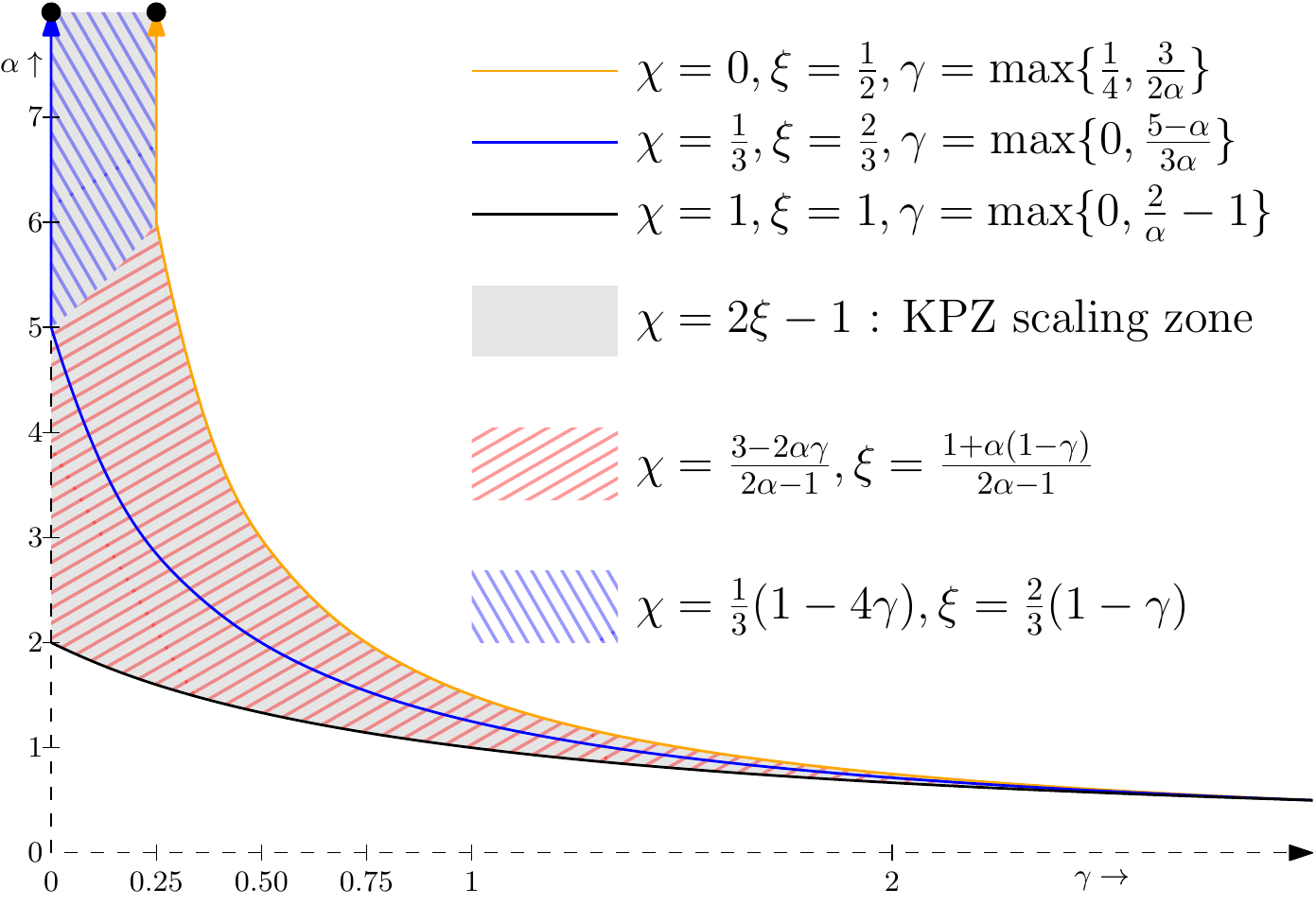}                                                 
	\caption{Phase diagram for fluctuation and transversal exponents} 
	\label{fig1}                                                      
\end{figure}

The region $R_2\cup R_3\cup R_4\cup R_5\cup R_6$ is divided along families of curves characterized by the same exponent $\xi$ (and hence $\chi$). 
More precisely, for any fixed value of $\xi\in[1/2,1]$, the curve in the $(\gc,\ga)$ plane determined by 
\begin{align}\label{eq:level}
\Big\{ (\ga,\gc) \colon \xi= \frac{1+\ga(1-\gc)}{2\ga-1}\,\,\text{and}\,\,\ga\le \frac{5-2\gc}{1-\gc} \Big\}
\,\,\bigcup \,\,\Big\{(\ga,\gc) \colon \xi=\frac{2(1-\gc)}{3}\,\,\text{and}\,\,\ga\ge \frac{5-2\gc}{1-\gc}  \Big\},
\end{align}
should give rise to a polymer measure with transversal-fluctuation-exponent $\xi$ and $\chi=2\xi-1$. In the region $R_1\cup R_7$, the KPZ hyper-scaling relation $\chi=2\xi-1$ fails.
 Inside the region $R_1$ we still have $\xi=\frac12$ but $\chi=\frac14-\gamma<2\xi-1$. Similarly, in region $R_7$ we have $\xi=1$ and $\chi= \frac{2}{\alpha}-\gamma>2\xi-1$. 

\vskip 2mm
In \cite{AKQ10} a heuristic explanation is provided for the exponents $\chi={(1-4\gc)/3}$, $\xi=2(1-\gc)/{3}$ when ``$\ga=\infty$'' (in the sense of finite exponential moments) and $\gc\in(0,1/4)$ based on Airy process heuristics.  
However, this heuristic breaks down when $\ga$ is finite, since the largest weights may play an important role for the exponents, as instead of logarithmic growth the largest weights grow polynomially and can compete with the Brownian fluctuations and entropy terms. To outline our approach, let us fix $\ga>0$, $\gc\ge 0$ and define $\gb_{n}=\gb n^{-\gc}$.  Recall that the energy for the path $\vs$ is $\gb_nH^\go(\vs)$ and the fluctuation exponent $\chi$ gives the typical order of 
	\begin{align*}
		\log Z^{\,\go}_{n,\gb_n} -\E[\log Z^{\,\go}_{n,\gb_n}] = \log \Big\la\exp\Bigl(\gb_n(H^\go - \gl_{n,\gb_{n}})\Bigr)\Big\ra 
	\end{align*}
	as $n^{\chi+o(1)}$, where
	\[
		\gl_{n,\gb_n}:=\frac{1}{\gb_n} \E[\log\la\exp(\gb_n H^\go)\ra],
	\]
	is larger than $0$ by Jensen's inequality (and in fact, typically, it is much larger than $n^{1/2}$). 
Now, take a real number $\gz\in[1/2,1]$ and a box $B_{n,\gz}:=\big( [0,n]\times (-n^\zeta,n^{\zeta})\big)\cap \bZ^2$.  
It is a classic result of order statistics \cite{LLR83} that (here, for simplicity, we ignore slowly varying corrections)
\[
\max\{ \go_v\colon v\in B_{n,\zeta} \} \approx |B_{n,\zeta}|^{1/\ga} \approx n^{\frac{1+\zeta}{\ga}}.
\]
In fact, if $(\go^{(j)}, j=1,2,\ldots,|B_{n,\zeta}|)$ denotes the order statistics of the set $\{ \go_v\colon v\in B_{n,\zeta} \} $ in decreasing order
and $(t^{(j)}, x^{(j)})_{j=1,...,|B_{n,\zeta}|}$ the location inside $B_{n,\zeta}$ where the $j^{\text{th}}$ largest valued is attained, we have that, for any finite $k$
\[
\{n^{-\frac{1+\zeta}{\ga} }\go^{(j)}, \,n^{-1} t^{(j)}, \,n^{-\zeta} x^{(j)}  \}_{j=1,...,k} \,\xrightarrow[n\to\infty]{(d)}\, \{ {\mvw}^{(j)},\, \mvt^{(j)}, \mvx^{(j)} \}_{j=1,...,k},
\]
where are $\mvw^{(j)}$ are non trivial random variables and $\mvt^{(j)}, \mvx^{(j)}$ are independent uniform variables in $[0,1]$.
Hence, the fluctuation of $H$ around any centering will be at least of the order $n^{(1+\gz)/\ga}$. Since the location of the large weights is at scale $n\times n^\zeta$ and uniformly distributed in the box $B_{n,\zeta}$, the entropy cost to catch the large weights will be given by the moderate deviations of the simple random walk (cf. \cite{DZ02})
\begin{equation}\label{moderate}
-\log \vP_n(s_{n\mvt^{(j)}} \approx  n^\zeta \mvx^{(j)}) \approx  n^{2\zeta-1} \,\frac{(\mvx^{(j)})^2}{2\mvt^{(j)}}.
\end{equation}
If we assume that the fluctuations of the partition function are driven by the strategy, which dictates to catch the large weights in a box $B_{n,\zeta}$ (due to a negligible contribution of the rest of the disorder to the energy), then the corresponding energy-entropy competition, will lead to
\begin{align}\label{ene-ent}
n^{\chi+o(1)} \,&\approx \, \log Z^{\,\go}_{n,\gb_n} -\E[\log Z^{\,\go}_{n,\gb_n}] \, \approx  \, \beta_n n^{\frac{1+\zeta}{\ga}} -n^{2\zeta-1}
=n^{\frac{1+\zeta}{\ga}-\gamma} -n^{2\zeta-1}
\end{align}
 The contribution of such strategy will be negligible unless
	\begin{align*}
		2\gz-1 & \le (1+\gz)/\ga-\gc 
		\,\,\text{ \ie\ }  \,\,\gz\le (1+\ga(1-\gc))/(2\ga-1).
	\end{align*}
	Since the largest exponent occurs when equality holds, one expects that the fluctuation exponents will be given by 
\[
\xi= \frac{1+\ga(1-\gc)}{2\ga-1} \quad \text{ and }\quad \chi=2\xi-1=\frac{3-2\ga\gc}{2\ga-1}.
\]
However, when the exponent $\chi=(3-2\ga\gc)/(2\ga-1)$, provided by this strategy, becomes smaller than the fluctuation exponent $\chi=(1-4\gamma)/3$ conjectured for $\ga=\infty$ and same $\gc$, 
we will have a situation where the strategy of catching the large weights is not optimal. In this case, {the strategy, which is intrinsic for $\alpha=\infty$, will prevail leading to} exponents $\chi=(1-4\gamma)/3$ and $\xi=2(1-\gamma)/3$ independent of $\ga$. 
This leads to a decomposition of the $(\ga,\gamma)$ phase diagram determined by the values of $\xi\in[1/2,1]$ and the set of equations~\eqref{eq:level}.
The case $\xi=1/2$, corresponding to diffusive behavior of the polymer, determines that $\gamma=1/4$ for $\ga\geq 6$ (which is consistent with the \cite{AKQ12} conjecture) and $\gamma=3/2\ga$ for $1/2<\ga\le 6$.

	\subsection{Strategy}\label{strategy} 
	We will explain the strategy behind the proof of Theorems \ref{thm:14}, \ref{thm:gauss} and \ref{thm:heavy}. We will mainly focus on the case $\ga> 6$, as it is more transparent.	To formalize the heuristic, one needs to look first at the contribution from what we call the {\it bulk} of the weights to the partition function. 
This is determined by the variables $\{\go_v\}$, which have a value less than a certain cutoff level $k_n$. We will choose
\[
k_n=
\begin{cases}
	\gb_n^{-1}     &, \text{ when } \ga>6,      \\
	\gb_n^{-1}\,\frac{m(n^{3/2}(\log n)^{\eta})}{ m(n^{3/2})} &, \text{  when } \ga\in (\frac12,6] 
\end{cases}
\]
for some $\eta\in(1/2,\ga)$. We then consider the modified environment given by 
	\[
		\tgo_{v}=\go_{v}\ind_{\{\go_v\le k_n\}} \text{ for } v\in \dZ^2.
	\] 
The behavior of $\tgo$ is better compared to $\go$, in the sense that it is bounded and has finite exponential moments. In particular, we have the following result, whose proof is given in Appendix \ref{sec:aux}, 
	\vskip 4mm	
	\begin{lem}\label{lem:comp}
		{\bf I.} Let $\bar{F}(x)=\pr(\go> x)=x^{-\ga}L(x)$, $\ga\geq 2$, and any number $\theta\in(1, \ga)$ such that $\bE[\,\go_+^\theta\,]<\infty$
		and denote $p:=\lfloor \theta \rfloor $. Let also $\lim_{n\to\infty}\gb_n=0$. Then, for any $k_n$ such that $k_n\gb_n$ is bounded away from zero and $\gb_n^{\ga-\theta}e^{\gb_nk_n}\to 0$, as $n$ tends to infinity, we have 
		\begin{align*}
		(i)& \quad \lim_{n\to\infty} \frac{1}{\gb_n^\theta} \Bigl|\E[e^{\gb_n \go_+\ind_{\go\le k_n}}]-1-\sum_{i=1}^{p} \frac{\gb_n^i}{i!} \E[\go_+^i]\Bigr|=0\\
	\text{ and}\quad	(ii)&\quad
			 \lim_{n\to\infty} \frac{1}{\gb_n^\theta} \Bigl|\log \E[e^{\gb_n \go\ind_{\go\le k_n}}]-\log\Big(\E[e^{\gb_n \go_-}]+\sum_{i=1}^{p} \frac{\gb_n^i}{i!} \E[\go_+^i]\Big)\Bigr|=0.
		\end{align*}
\vskip 2mm
               {\bf II.} Assume that  $\bar{F}(x)=\pr(\go> x)=x^{-\ga}L(x)$, with $\ga\in(1/2,2)\setminus \{1\}$. Additionally, assume that $\bE[\go]=0$, if $\ga\in(1,2)$. Then, for any $k_n$ we have
                   \begin{align*}
                      (i)&   \quad   \Bigl|\E[e^{\gb_n \go\ind_{\go\le k_n}}]-1\Bigr| \leq \text{const.} e^{\gb_nk_n} \,\max\{\bar{F}(\gb_n^{-1}),\,\bar{F}(k_n)\}\\
\text{ and} \quad(ii)&\quad  \Bigl|\E[e^{\gb_n \go\ind_{|\go|\le k_n}}]-1\Bigr| \leq C(\gb_n,k_n) \,e^{\gb_nk_n}\max\{\bar{F}(\gb_n^{-1}),\,\bar{F}(k_n)\},
		\end{align*}
		where the constant $C(\gb_n,k_n)$ converges to zero when $\gb_nk_n\to0$.
	\end{lem}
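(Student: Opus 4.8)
The plan is to reduce both parts to sharp bounds on truncated exponential moments of $\go$. The recurring ingredients are the Taylor expansion of $e^{\gb_n x}$, integration by parts to pass from $F$ to the tail $\bar{F}$, monotonicity of $\bar{F}$, and Karamata's theorem with Potter's bounds for the regularly varying integrals $\int_{0}^{k_n}x^{j}\bar{F}(x)e^{\gb_n x}\,\dd x$ that arise; these I always split at the scale $1/\gb_n$, which separates the range where $e^{\gb_n x}$ is of order $1$ from the range where it grows.

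\emph{Part I.} Since $e^{\gb_n\go_{+}\ind_{\go\le k_n}}-1=(e^{\gb_n\go}-1)\ind_{0<\go\le k_n}$, the quantity in (i) equals
\[
\E\Big[\Big(e^{\gb_n\go}-1-\textstyle\sum_{i=1}^{p}\tfrac{(\gb_n\go)^{i}}{i!}\Big)\ind_{0<\go\le k_n}\Big]\;-\;\sum_{i=1}^{p}\frac{\gb_n^{i}}{i!}\,\E\big[\go_{+}^{\,i}\ind_{\go>k_n}\big].
\]
In the finite sum, $\go_{+}^{\,i}\ind_{\go>k_n}\le k_n^{\,i-\theta}\,\go_{+}^{\,\theta}\ind_{\go>k_n}$ and $\E[\go_{+}^{\,\theta}\ind_{\go>k_n}]\to0$ (as $\E[\go_{+}^{\,\theta}]<\infty$, $k_n\to\infty$) make each term $(\gb_n k_n)^{i-\theta}\gb_n^{\theta}o(1)=o(\gb_n^{\theta})$, using $\gb_n k_n\ge c>0$ and $i\le\theta$. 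The remainder is dominated, via $0\le e^{t}-1-\sum_{i=1}^{p}\tfrac{t^{i}}{i!}\le\tfrac{t^{p+1}}{(p+1)!}e^{t}$, by $\tfrac{\gb_n^{p+1}}{(p+1)!}\E[\go^{p+1}e^{\gb_n\go}\ind_{0<\go\le k_n}]$; split this expectation at $1/\gb_n$. On $(0,1/\gb_n]$, $e^{\gb_n\go}\le e$ and Karamata give $\E[\go^{p+1}\ind_{0<\go\le1/\gb_n}]\lesssim 1+\gb_n^{-(p+1)}\bar{F}(1/\gb_n)$, so the $\gb_n^{p+1-\theta}$‑multiple is $O\big(\gb_n^{p+1-\theta}+\gb_n^{\ga-\theta}L(1/\gb_n)\big)\to0$. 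On $(1/\gb_n,k_n]$, integration by parts against $\bar{F}$, the substitution $u=\gb_n x$, Potter's inequality, and a Laplace/Watson estimate $\int_{1}^{X}u^{r}e^{u}\,\dd u\asymp X^{r}e^{X}$ ($X\to\infty$) bound the contribution, after the $\gb_n^{p+1-\theta}$‑multiple and $k_n\ge c\gb_n^{-1}$, by $O\big(\gb_n^{\ga-\theta}L(k_n)(1+\gb_n k_n)^{\kappa}e^{\gb_n k_n}\big)$ for a fixed $\kappa\ge0$. Since $\gb_n^{\ga-\theta}e^{\gb_n k_n}\to0$ forces $\gb_n k_n=O(\log\gb_n^{-1})$, and $\ga-\theta>0$ is fixed (and in every application $\gb_n$ is a negative power of $n$ up to slowly varying corrections, so this decay is polynomial), this tends to $0$; this proves (i). Then (ii) is immediate: $e^{\gb_n\go}=e^{-\gb_n\go_-}$ on $\{\go\le0\}$ gives $\E[e^{\gb_n\go\ind_{\go\le k_n}}]=\E[e^{-\gb_n\go_-}]-1+\E[e^{\gb_n\go_{+}\ind_{\go\le k_n}}]$; inserting (i) and noting $A_n:=\E[e^{-\gb_n\go_-}]+\sum_{i=1}^{p}\tfrac{\gb_n^{i}}{i!}\E[\go_{+}^{\,i}]\to1$, so $\log(A_n+o(\gb_n^{\theta}))=\log A_n+o(\gb_n^{\theta})$, finishes.

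\emph{Part II.} Again $e^{\gb_n\go\ind_{\go\le k_n}}-1=(e^{\gb_n\go}-1)\ind_{\go\le k_n}$, split over $\{\go\le0\}$ and $\{0<\go\le k_n\}$. For $\ga\in(1,2)$ one first uses $\E[\go]=0$ to replace $e^{\gb_n\go}-1$ by $e^{\gb_n\go}-1-\gb_n\go$, at the cost of a term $-\gb_n\E[\go_{+}\ind_{\go>k_n}]$ which by Karamata and $t\le e^{t}$ is $\lesssim(\gb_n k_n)\bar{F}(k_n)\le e^{\gb_n k_n}\bar{F}(k_n)$; for $\ga\in(1/2,1)$ no subtraction is needed. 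In both cases: on $\{\go\le0\}$ the inequality $|e^{-t}-1+t|\le\min(t,\tfrac12 t^{2})$ (resp.\ $|e^{-t}-1|\le\min(t,1)$), integration by parts against $F(-\cdot)=(c_-+o(1))\bar{F}$, and Karamata give $\lesssim\bar{F}(1/\gb_n)$; on $\{0<\go\le k_n\}$, integration by parts gives a quantity $\le\gb_n\int_{0}^{k_n}\psi(\gb_n x)\bar{F}(x)e^{\gb_n x}\,\dd x$ with $\psi$ bounded by $e$ (resp.\ by $1$), and splitting at $1/\gb_n$ --- Karamata below $1/\gb_n$, and $\bar{F}(x)\le\bar{F}(1/\gb_n)$ together with $\gb_n\int_{1/\gb_n}^{k_n}e^{\gb_n x}\,\dd x\le e^{\gb_n k_n}$ above $1/\gb_n$ --- gives $\lesssim\bar{F}(1/\gb_n)\,e^{\gb_n k_n}$. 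Summing proves (i). For (ii), replacing $\{\go\le k_n\}$ by $\{|\go|\le k_n\}$ only removes the contribution of $\{\go<-k_n\}$, which by the same estimates is $\lesssim\bar{F}(1/\gb_n)+(\gb_n k_n)\bar{F}(k_n)$; this yields the bound with some $C(\gb_n,k_n)$, and when $\gb_n k_n\to0$ one has $k_n\le1/\gb_n$, so $\bar{F}(1/\gb_n)=o(\bar{F}(k_n))$ by Potter's bound, $e^{\gb_n x}\le e^{\gb_n k_n}\to1$ throughout, and every bound above carries a factor $\gb_n k_n$ or $\bar{F}(1/\gb_n)/\bar{F}(k_n)$, whence $C(\gb_n,k_n)=O(\gb_n k_n\,e^{\gb_n k_n})\to0$.

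\emph{Main obstacle.} The delicate step is the asymptotics of $\int_{0}^{k_n}x^{j}\bar{F}(x)e^{\gb_n x}\,\dd x$. The integrand first decreases and then increases, with minimum near $x\asymp1/\gb_n$, so the integral is governed by the region $x\asymp1$, $x\asymp1/\gb_n$, or $x\asymp k_n$ depending on the size of $\gb_n k_n$, and one must split at these scales. On $[1/\gb_n,k_n]$ one has to combine Potter's bounds with a Laplace/Watson estimate while keeping enough slack that the slowly varying function $L$ and the power $(1+\gb_n k_n)^{\kappa}$ do not overcome the decay furnished by $\gb_n^{\ga-\theta}e^{\gb_n k_n}\to0$ in Part~I (respectively, by $\gb_n k_n\to0$ for the constant $C(\gb_n,k_n)$ in Part~II(ii)). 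This is precisely where the concrete form of $k_n$ --- polynomially small in $n$ up to slowly varying and polylogarithmic factors --- is used.
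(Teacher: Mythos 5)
Your proof is correct and follows essentially the same route as the paper's: the same reduction of I(ii) to I(i) via the positive/negative split, the same decomposition into the truncated-moment tail plus the Taylor remainder $\varphi_p(\gb_n\go_+)$ with a split at the scale $1/\gb_n$, and the same integration-by-parts/Karamata/Potter estimates (with centering by $\E[\go]=0$ for $\ga\in(1,2)$) in Part II. The only local differences are that on $\{0<\go\le 1/\gb_n\}$ the paper uses dominated convergence against the $\theta$-th moment where you use the cruder $(p+1)$-st order Taylor bound plus Karamata, and that the residual slowly varying factors multiplying $\gb_n^{\ga-\theta}e^{\gb_n k_n}$ are absorbed by the paper with the same (implicit) appeal to the polynomial form of $\gb_n$ and the strict inequality $\theta<\ga$ that you make explicit --- neither of which changes the substance.
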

	\vskip 2mm	
	In the above lemma we excluded the case $\ga=1$ purely for exposition purposes since we will not need this case, while the bounds are less explicit due to slowly varying effects.
	
	Lemma \ref{lem:comp} indicates that if we could replace all ``large'' $\go_v$'s by $0$, the new environment, $\tgo_v:=\go_v\ind_{\{\go_v\leq k_n\}}$, behaves in a  much regular way and we can then try to prove a limit theorem for 
	\[
		b_n(\log Z_{n,\gb_n}^{\tgo}-a_n(k_n))
	\]
	for appropriate $a_n(k_n),b_n$.  In the case that $\ga \geq 6$, the desired limit theorem for $\log Z_{n,\gb_n}^\tgo$ falls into the framework of~\cite{AKQ12},~\cite{CSZ15} and a limiting process exists with $b_n=1, k_n=n^{1/4}$ and $a_n(k_n)$ as in Theorem \ref{thm:14}. The next step is to prove that the modified environment gives the dominant contribution in the partition function in the sense that
	\[
		b_n(\log Z_{n,\gb_n}^{\go} - \log Z_{n,\gb_n}^{\tgo})\longrightarrow 0,\,\, \text{ as } n\to\infty
	\]
	in probability for an appropriate choice of the cutoff $k_n$. The strategy to achieve this is as follows: Since we concentrate on the part of the phase space corresponding to exponent $\xi=1/2$, the
main contribution to the partition function should come from paths that stay within  $B=[0,n]\times [-n^{(1+\gd)/2},n^{(1+\gd)/2}]$, for $\gd>0$, chosen appropriately small. Call this set of paths $\mathcal{B}$. Restricting the partition function to the set of such paths we have
\begin{align*}
0\le \log Z_{n,\beta_n}^{\go}(\mathcal{B}) - \log Z_{n,\beta_n}^{\tilde\go}(\mathcal{B}) \le \beta_n \sum_{v\in B} \go_v \ind_{\go_v\ge k_n}.
\end{align*}
The expected value of the right hand side is
\begin{align}\label{energy-in-box}
\begin{split}
\gb_n\E\big[ \sum_{v\in B}\go_v \ind_{\go_v\ge k_n}\big]  
&\le \gb_n |B| \E[\go \ind_{\go\ge k_n}] \\
&\le \text{const.} \,\gb_n n^{\frac{3+\gd}{2}} k_n^{1-\ga} \E[\go_+^\ga]=O(n^{\frac{6-\ga}{4}+\frac{\gd}{2}}),
\end{split}
\end{align}
where we made use of the choice $k_n=O(n^{1/4})$ and, for the sake of exposition, we made the slightly stronger assumption that $\E[\go_+^\ga]<\infty$. The assumption that $\ga>6$ and a choice of appropriately small $\gd$ will show that the partition functions of the original and the truncated variables, when the
 paths are restricted to box $B$ is asymptotically zero, in probability. The contributions to the partition function from paths that exit the box $B$ will be controlled by 
 an energy-entropy estimate in the spirit of \eqref{ene-ent}. However, to succeed in this control, a multi-scale argument is needed, which will consider the energy-entropy balance in a sequence of boxes $B_j:=[0,n]\times[-n^{(1+\delta_j)/2}, n^{(1+\delta_j)/2} ]$, for appropriate choice of increasing positive numbers $(\gd_j)_{j\geq 1}$. The basic principle is that the energy collected by paths staying inside box $B_j$ (but exiting box $B_{j-1}$) is bounded by $\gb_n\E\big[ \sum_{v\in B_j}\go_v \ind_{\go_v\ge k_n}\big] = O\big(n^{\frac{6-\ga}{4}+ {\gd_j}/{2}}\big)$, as in \eqref{energy-in-box}. However, the entropy cost to exit the box $B_{j-1}$ (in order to catch the large weights outside of it) is of the order
 $O(n^{\gd_{j-1}} )$, by \eqref{moderate}. For $\ga\ge6$ the entropy will dominate the energy, as long as $\gd_j/2<\gd_{j-1}$ and this relation can be iterated, to cover all the scales.
 \vskip 2mm
Similar strategy is applied in the case $\frac12<\ga\le6$. However, the cutoff has to be chosen differently $k_n=m(n^{3/2}(\log n)^{\eta})/m(n^{3/2})$, with $\eta\in(1/2,\ga)$, leading to a more subtle multi-scale procedure with a logarithmic number of iterations, instead of a finite that was sufficient in the $\ga>6$ case. 
 \vskip 2mm
Let us also give a brief explanation of the origins of the different scaling limits obtained in the three theorems. As we already mentioned, once the truncation is achieved, the limit behavior coincides with that of the truncated partition function $Z^\tgo_{n,\gb_n}$. In the case $\alpha>2$ the necessary centering will correspond to normalizing the partition function by the moment generating function $\gl_n(\gb_n)$ of the truncated variables $\tgo=\go \ind_{\{\go\leq k_n\}}$. Denoting by $\gz^{(n)}_{i,x}:=  e^{ \gb_n\tgo_{i,x} -\gl_n(\gb_n)}-1$ and performing a binomial expansion we write 
\begin{align*}
e^{-n\gl_n(\gb_n)} Z_{n,\gb_n}^\tgo 
&= \Big\la e^{\sum_{i=1}^n (\gb_n\tgo_{i,s_i} -\gl_n(\gb_n))}\Big\ra \\
&=  \Big\la \prod_{i=1}^n\big( 1+\gz^{(n)}_{i,s_i})  \Big\ra 
=1 +\sum_{1\leq i\leq n,\,x\in\bZ} \gz^{(n)}_{i,x} p_n(i,x) \\
&\qquad\qquad\qquad\qquad+\sum_{\substack{1\leq i_1<i_2  \leq n \\ x_1,x_2 \in \bZ}} \gz^{(n)}_{i_1,x_1} \gz^{(n)}_{i_2,x_2} \,\,p_n(i_1,x_1) p_n(i_2,x_2)+\cdots
\end{align*}
Thanks to the centering $\gl_n(\gb_n)$ the random variables $\gz^{(n)}_{i,x}$ have mean zero. Computing the variance of the first nontrivial term in the above expansion, we have that
\begin{align*}
\bV\text{ar} \big( \sum_{1\leq i\leq n,\,x\in\bZ} \gz^{(n)}_{i,x} p_n(i,x)  \big) = \big( e^{\gl_n(2\gb_n)-2\gl_n(\gb_n)}-1 \big) \sum_{1\leq i\leq n,\,x\in\bZ} (p_n(i,x))^2.
\end{align*} 
 Lemma \eqref{lem:comp} shows that when $\ga>2$ then $e^{\gl_n(2\gb_n)-2\gl_n(\gb_n)}-1\approx \gb_n^2$. Using the local limit theorem  i.e. $\sqrt{n} p_n(i,x)\approx 2\rho(i/n,x/\sqrt{n})$, (where we have also taken into account the periodicity of the  simple random walk), we finally see that  
 \begin{align*}
\bV\text{ar} \big( \sum_{1\leq i\leq n,\,x\in\bZ} \gz^{(n)}_{i,x} p_n(i,x)  \big)\approx 2\gb_n^2 \sqrt{n} =(\sqrt{2}\gb_n n^{1/4})^2.
\end{align*} 
In the case $\ga>2$ the random variables $\tilde\gz^{(n)}_{i,x}:=\gb_n^{-1} \gz^{(n)}_{i,x}$ will have uniformly integrable second moments and we will be in a regime where the multilinear polynomials, below, will have a well defined limit, i.e. 
\begin{align*}
e^{-n\gl_n(\gb_n)} Z_{n,\gb_n}^\tgo
&=1 +\frac{(\gb_n n^{1/4})}{n^{3/4}}\sum_{1\leq i\leq n,\,x\in\bZ} \tilde\gz^{(n)}_{i,x} \,\big(\sqrt{n}p_n(i,x)\big)\\
&\quad\quad+ \frac{(\gb_n n^{1/4})^2}{(n^{3/4})^2}\sum_{\substack{1\leq i_1<i_2  \leq n \\ x_1,x_2 \in \bZ}} \tilde\gz^{(n)}_{i_1,x_1} \tilde\gz^{(n)}_{i_2,x_2} \,\,\big(\sqrt{n}p_n(i_1,x_1)\big) \big(\sqrt{n}p_n(i_2-i_1,x_2-x_1)\big)+\cdots\\
&\approx 1 +(\sqrt{2}\gb_n n^{1/4}) \int_0^1\int_\bR \gr(t,x) W(\dd t\, \dd x ) \\
&\qquad\qquad+
 (\sqrt{2}\gb_n n^{1/4})^2 \int_{0<t_1<t_2<1}\int_{\bR^2} \prod_{i=1,2}\gr(t_i-t_{i-1},x_{i}-x_{i-1}) W(\dd t_{i}\, \dd x_i )+\cdots
\end{align*} 
When $\gb_n n^{1/4}\to\gb\in (0,\infty)$, we will be in the regime covered by Theorem \ref{thm:14}. When $\gb_n n^{1/4}\to 0$, which is the case of Theorem \ref{thm:gauss}, the terms of order two or higher will be negligible compared to the first two terms, while the distribution of the linear term is asymptotically Gaussian, i.e. 
\begin{align}\label{expansion}
e^{-n\gl_n(\gb_n)} Z_{n,\gb_n}^\tgo&\approx 
1 +(\sqrt{2}\gb_n n^{1/4})\int_0^1\int_\bR \gr(t,x) W(\dd t\, \dd x ) +o(\gb_n n^{1/4})
\end{align} 
\vskip 2mm
In the case $\ga<2$, the random variables $\tilde\gz^{(n)}_{i,x}:=\gb_n^{-1} \gz^{(n)}_{i,x}$ fail to have uniformly integrable second moments (or even a second moment), due to the necessary choice of the truncation level $k_n\gg \gb_n^{-1}$ and so we come out of a central limit theorem regime. The largest weights will now have a dominant role and the choice of $\gb_n=\gb \cdot m(n^{3/2})^{-1}$
will create an asymptotically Poissonian field $\cP(\dd w \dd x \dd t)$ for the values $\{(\gb_n \go_{i,x},x,i) \,:\, 1\leq i\leq n, |x|=O(\sqrt{n}) \}$. 
Let us write a multilinear expansion of the partition function $Z_{n,\gb_n}^\go$ (without a truncation, although we will eventually need to make a suitable truncation)
\begin{align*}
 Z_{n,\gb_n}^\go
 &= 1 +\sum_{1\leq i\leq n,\,x\in\bZ} (e^{\gb_n\go_{i,x}}-1) p_n(i,x) \\
&\qquad\qquad+
\sum_{\substack{1\leq i_1<i_2  \leq n \\ x_1,x_2 \in \bZ}} 
\prod_{j=1,2} (e^{\gb_n\go_{i_j,x_j}}-1)  \,\,p_n(i_j-i_{j-1},x_j-x_{j-1}) +\cdots\\
&\approx 1+ \frac{1}{\sqrt{n}} \sum_{1\leq i\leq n,\,x\in\bZ} (e^{\gb_n\go_{i,x}}-1) \,(\sqrt{n}p_n(i,x)) + O({1}/{n})\\
&``\approx "\, 1+ \frac{1}{\sqrt{n}} \int (e^{\gb w}-1) \rho(t,x) \cP(\dd w\dd x \dd t) + O({1}/{n}),
\end{align*}
where we put the last approximation in quotations, since a careful centering will need to be made, in order to guarantee the well definiteness of the Poissonian integrals. This will also alter the final formulation of the last line.

The limiting partition function will be a weighted summation over the Poissonian field of $e^{\gb w}-1$ weighted by the Gaussian probability $\rho(x,t)$. This is to be interpreted that the polymer will search for all the sites with a large weight that will give energy $e^{\gb w}-1$ and an entropy $n^{-1/2}\rho(x,t)$ will have to be paid. 
However, it will be too costly for the polymer to get simultaneous contribution from two such sites, as the entropy that will be paid will be of order $(1/\sqrt{n})^{\sharp\{\text{sites visited}\}}$. Finally, it is also worth comparing the contribution to the scaling limit from a Gaussian behavior, driven by the bulk of ``small'' weights. By \eqref{expansion} this will be of order $\gb_n n^{1/4} =m(n^{3/2})^{-1} n^{1/4}\approx n^{(\ga-6)/(4\ga)}\ll n^{-1/2}$, for $\ga<2$. Therefore, it is also seen in this way that the Gaussian fluctuations, driven by the bulk of small weights, is negligible compared to the Poissonian fluctuations, driven by the few large ones.

\subsection{Conjectured Behavior in Region $R_5$}
Let us close by discussing a conjectured behaviour for $(\ga,\gc)$ in the region 
\[
R_5:= \{(\gc,\ga): \ga> 1/2,  \max\{0,2/\ga-1, (\ga-5)/(\ga - 2) \} < \gc < 3/2\ga\}
\]
 and $\go$ satisfying~\eqref{defA}. The heuristic idea presented in Section~\ref{heuristic} shows that weights outside a box of width much larger
  $ h_{n}=n^\xi$ with $\xi=\big(1+\ga(1-\gamma)\big)/(2\ga-1)$
 should not contribute much, whereas the big weights in the box of width $h_n$ should give the dominant contribution. The temperature is scaled 
 as $\gb_n=\hat\beta n^{-\gamma}$, with $\gamma=\ga^{-1}(1+\xi)-\chi$ and $\chi=2\xi-1$ and we are asking for a distributional limit for
\[
n^{-\chi}\big(\,\log Z_{n,\gb_n}^{\go} - a_n\,\big)
\]
 for some centering $a_n$. The energy $H^\go(\vs)$ of a path $\vs$ restricted to a box of width $h_n$ will be
 \[
 \gb_n H^\go(\vs):=n^\chi \sum_{v\in[0,n]\times (-h_n,h_n),\, v\in \vs} n^{-\frac{1+\xi}{\ga}}\,\go_{v} \approx 
 n^{\chi} \sum_{\cP} u_i,
 \]
 where $\cP=\{(x_i, t_i, u_i): i\ge 1\}$ is the limiting Poisson point process consisting of the locations $(x_i,t_i)$ and sizes $u_i$ of the scaled variables 
 $n^{-\frac{1+\xi}{\ga}}\,\go_{v}$ inside the box of width $h_n$. Moreover, an entropy cost will have to be paid for the path to catch the big, scaled
 weights $u_i$. The size of the cost to go from one point $(x_i,t_i)\in \cP$ to another one $(x_j,t_j)\in \cP$
  will be $n^\chi |x_i-x_j|^2/(2|t_i-t_j|)$, cf. \eqref{moderate}, which matches the size of the energy fluctuations. Therefore, one can expect that when the
   logarithm of the partition function is scaled by $n^\chi$ and is centered appropriately, it will converge in distribution to a random 
   variational problem described by the random  variable
\begin{align*}
	T_{\hat\beta} :=\sup_{A\subset \cP \colon |A|<\infty}\sum_{i\in A} \Big(\hat\beta u_i - \frac{\, \, |x_i-x_{i-1}|^2}{2|t_i-t_{i-1}|} \Big) 
\end{align*}
We conjecture that:
\begin{conj}
For $\ga\in (1/2,2]$, $T$ is well-defined. Moreover, for $(\ga,\gc)\in R_{5}, \ga<2$, $\gb_n=\hat\beta n^{-\gamma}$, with $\gamma=\ga^{-1}(1+\xi)-\chi$ and $\chi=2\xi-1$ and $\go$ satisfying~\eqref{defA}, there exist constants $a_{n}$  such that
\[
n^{-\chi}(\log Z_{n,\gb_n}^{\go} - a_n) \xrightarrow[]{(d)} T_{\hat\beta},\quad \text{ as } n\to\infty.
\]
\end{conj}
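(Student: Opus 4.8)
The plan is to transplant to region $R_5$ the ``truncate-then-localize'' strategy behind Theorems~\ref{thm:14}--\ref{thm:heavy}, with the crucial difference that the large weights are now \emph{retained}: their energy, balanced against the random-walk entropy, is exactly what condenses into $T_{\hat\beta}$. Write $\gb_n=\hat\beta n^{-\gamma}$ with $\gamma=\ga^{-1}(1+\xi)-\chi$, $\chi=2\xi-1$, and rescale a site $(i,x)$ to $(t,\tilde x)=(i/n,x/n^{\xi})$; by \eqref{moderate} the walk cost of threading $(i,x)$ is $n^{\chi}\tilde x^2/(2t)(1+o(1))$, while a weight $\go_{i,x}$ of rescaled size $u=n^{-(1+\xi)/\ga}\go_{i,x}$ contributes energy $\gb_n\go_{i,x}=\hat\beta n^{\chi}u$, so both live at scale $n^{\chi}$ and compete as in the definition of $T_{\hat\beta}$. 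Before anything else one must verify that $T_{\hat\beta}<\infty$ a.s.\ (it is $\ge0$ by taking $A=\eset$): a single point $(w,t,x)\in\cP$ can contribute more than $\lambda$ only if $w>(\lambda+x^2/(2t))/\hat\beta$, and the $\eta$-mass of such $w$ integrates over $(t,x)\in[0,1]\times\dR$ to a constant times $\hat\beta^{\ga}\lambda^{1/2-\ga}\int_\dR(1+y^2)^{-\ga}\,dy$, which is finite and vanishes as $\lambda\to\infty$ \emph{precisely because} $\ga>1/2$; chaining this over a time-ordered competing set (the entropy deficits between consecutive kept points add up) yields $\pr(T_{\hat\beta}>\lambda)\to0$. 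This is the positive-temperature analogue of the finiteness statements for continuum heavy-tailed last-passage percolation of Hambly--Martin and Auffinger--Louidor, and the hypothesis $\ga\le2$ only delimits the range in which $T_{\hat\beta}$, rather than a Gaussian object, is the relevant limit.

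Fix a small $\eps>0$, truncate the bulk at the level $k_n\asymp\gb_n^{-1}=\hat\beta^{-1}n^{\gamma}$ at which $e^{\gb_n\go}$ stops being linearizable (inserting a slowly-varying or $(\log n)^{\eta}$ correction to control the number of weights above $k_n$ in the relevant box, as in the proof of Theorem~\ref{thm:heavy}), and split $\go_v=\tgo_v+\go_v\ind_{\go_v>k_n}$ with $\tgo_v=\go_v\ind_{\go_v\le k_n}$. By Lemma~\ref{lem:comp}(II) the truncated environment is bounded with a controlled moment generating function; taking $a_n=n\log\E[e^{\gb_n\tgo}]$ (plus the $\ind_{\ga=1}$ recentering of Theorem~\ref{thm:heavy}), one checks using the constraints defining $R_5$ that the residual fluctuations of $\log Z^{\tgo}_{n,\gb_n}-a_n$ --- of the Gaussian type appearing in Theorems~\ref{thm:14}--\ref{thm:gauss} --- are $o(n^{\chi})$. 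One then has to confine the path to $B_n:=[0,n]\times(-n^{\xi+\eps},n^{\xi+\eps})$: by \eqref{moderate} and the order statistics of the weights, the large-weight energy a path can collect in the shell $B_j\setminus B_{j-1}$, where $B_j:=[0,n]\times(-n^{\xi+\gd_j},n^{\xi+\gd_j})$, is outweighed by the moderate-deviation cost $\asymp n^{2(\xi+\gd_{j-1})-1}$ of reaching it as soon as $\ga>1/2$ and the increasing scales $(\gd_j)$ are chosen suitably close; iterating over the $\gd_j$ (logarithmically many of them in the $\ga<2$ regime, with an extra top-truncation of the very largest weights when $\ga<1$, since then $\E[\go\ind_{\go>k_n}]=\infty$) kills the contribution of all paths exiting $B_n$.

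It remains to extract $T_{\hat\beta}$ from the large weights inside $B_n$. Expanding in the weights above $k_n$,
\[
e^{-n\log\E[e^{\gb_n\tgo}]}\,Z^{\go}_{n,\gb_n}(B_n)=\sum_A\Big(\prod_{v\in A}(e^{\gb_n\go_v}-1)\Big)q_n(A)\,(1+o(1)),
\]
the sum over finite time-ordered subsets $A\subset B_n$ with $\go_v>k_n$ on $A$ and $q_n(A)$ the simple-random-walk probability of threading $A$ in order; by the local limit theorem and the moderate-deviation estimate, $n^{-\chi}\log\big(\prod_{v\in A}(e^{\gb_n\go_v}-1)\,q_n(A)\big)$ converges, along the convergence of the rescaled weight field to $\cP$, to $\sum_{i\in A}\big(\hat\beta u_i-|x_i-x_{i-1}|^2/(2|t_i-t_{i-1}|)\big)$ (note that $n^{-\chi}\log(e^{\gb_n\go_v}-1)\to\hat\beta u_i$, a linearization of the logarithm, not of the exponential). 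Retaining only the a.s.\ finite optimal $A$ gives the lower bound $n^{-\chi}(\log Z^{\go}_{n,\gb_n}-a_n)\ge T_{\hat\beta}-o(1)$; the matching upper bound amounts to showing the whole subset-sum is $e^{o(n^{\chi})}$ times its largest term, which one reduces --- using the tail bound of the first paragraph to dominate the entropy of the family of near-optimal threadings in $B_n$ --- to a union-bound/second-moment estimate. A final continuous-mapping step for the $\sup$-functional, tightness being supplied by that tail bound (so that only a compact window of $\cP$ matters with high probability), upgrades this to $n^{-\chi}(\log Z^{\go}_{n,\gb_n}-a_n)\xrightarrow[n\to\infty]{(d)}T_{\hat\beta}$.

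The main obstacle is the combination of the first and last steps. First, proving $T_{\hat\beta}<\infty$ with enough quantitative control: the short-time regime $t\downarrow0$ and the cumulative effect of infinitely many moderate weights are delicate, and this is where $\ga>1/2$ is used sharply. Second, and more seriously, showing that the exponential rate of the subset-sum is dictated by a \emph{single} optimal configuration --- there are exponentially many candidate subsets of $B_n$, and one must preclude a macroscopic family of near-optimal threadings from changing the rate. This forces the energy--entropy balance to be controlled pathwise, not just in expectation as in \eqref{ene-ent}--\eqref{energy-in-box}, and is the reason the statement is offered only as a conjecture.
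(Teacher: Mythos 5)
The statement you are proving is not proved in the paper at all: it is offered there explicitly as a conjecture, and the authors immediately name the obstruction --- ``the identification of the centering $a_n$, which would cancel the contributions from moderate size disorder weights'', adding that since $\xi>1/2$ throughout $R_5$ one is no longer in the weak disorder regime and therefore ``one would expect that $a_n$ is not any more identical to a truncated log-moment generating function.'' Your proposal takes $a_n=n\log\E[e^{\gb_n\tgo}]$ and asserts that the residual fluctuations of $\log Z^{\tgo}_{n,\gb_n}-a_n$ are $o(n^{\chi})$, which is precisely the step the authors flag as doubtful. The danger is concrete: outside weak disorder the normalized truncated partition function $Z^{\tgo}_{n,\gb_n}/\E[Z^{\tgo}_{n,\gb_n}]$ is not expected to stay bounded away from $0$ (so hypothesis \eqref{posi-est} of Proposition~\ref{prop:bound} fails and the whole truncation machinery of Section~\ref{sec:trunc} is unavailable), and $\log Z^{\tgo}_{n,\gb_n}-n\gl_n(\gb_n)$ may drift to $-\infty$ at a rate comparable to $n^{\chi}$, driven by the moderate weights between $\gb_n^{-1}$ and $n^{(1+\xi)/\ga}$ that are neither absorbed into a Gaussian bulk nor visible in the limiting Poisson field. ``One checks using the constraints defining $R_5$'' is not a proof of this, and no mechanism is offered for it.

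Beyond that, the remaining architecture of your sketch is a sensible transcription of the paper's Section~\ref{heuristic} heuristics (the $n^{\chi}$ energy--entropy matching, the Poisson limit of the rescaled top order statistics, the multiscale confinement to width $n^{\xi+\eps}$), and your single-point tail computation for $T_{\hat\beta}$ --- giving mass of order $\hat\beta^{\ga}\lambda^{1/2-\ga}\int(1+y^2)^{-\ga}\,\dd y$, finite exactly when $\ga>1/2$ --- is correct and matches the role of the set $\sA_1$ in the proof of Lemma~\ref{PoissonInt}. But the two steps you yourself identify as open (a.s.\ finiteness of the supremum over all finite time-ordered configurations, and domination of the subset-sum by a single near-optimal configuration rather than an entropically large family of threadings) are genuinely unproved here, and the first of them is itself part of the conjectured statement. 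So this should be read as a roadmap consistent with the paper's heuristics, not as a proof; the decisive missing idea is a centering $a_n$, together with a quenched (not annealed) control of the moderate-weight contribution, that works in the superdiffusive regime.
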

One difficulty to establish this is the identification of the centering $a_n$, which would cancel the contributions from moderate size disorder weights.
Since in region $R_5$ it holds that $\xi>1/2$, we are not any more in the weak disorder regime and one would expect that $a_n$ is not any more 
identical to a truncated log-moment generating function.


 \subsection{Roadmap} 
We perform the truncation via the above mentioned multi-scale argument in Section~\ref{sec:trunc}. 
In Section~\ref{sec:conv}, we show that the partition function with truncated disorder converges to the desired limits when $\ga>2$. 
Finally, we identify the scaling limit in the case of $\ga< 2$ in Section~\ref{sec:le2}. In the appendix we provide the proofs of some auxiliary estimates.
Regarding notation, we will be writing $const.$ for a generic constant that does not depend on specific parameters. We will use freely the symbols $o(\cdot), O(\cdot)$ and when we 
want to put emphasis on the parameters, we will add these as subscripts, e.g. we will write $o_n(1)$, if we want to emphasize that a quantity converges to zero when $n$ tends to infinity. We will often interchange freely between the notation $v$ and $(i,x)$
for points in $\bN\!\times\!\bZ$.  
\section{Comparing the original and truncated partition functions}\label{sec:trunc}

Recall that the original environment is given by $\gO=\{\go_{v}:v\in\dZ^{2}\}$ and the truncated environment by $\tilde\gO=\{\tgo_{v}:v\in\dZ^{2}\}$ where $\tgo=\go\ind_{\{\go\le k_n\}}$. 
To show that the difference $\log Z^{\go}_{n,\gb_n} - \log Z^{\tgo}_{n,\gb_{n}}$ 
is ``small'', we use the multi-scale argument, outlined in the heuristics. Let us set up the framework introducing some notation. 
\vskip 2mm
Given a sequence of integers $0=h_0<  h_1 < h_2<\cdots <h_\ell$ such that $h_{\ell-1}<n\le h_{\ell}$, for some $\ell\ge 1$, we define the corresponding cylinder blocks as
\begin{align}\label{eq:block}
	\vB_j=[0,n]\times (-h_j, h_j) \text{ for } j=1,2,\ldots,\ell 
\end{align}
and the set of paths restricted to $\vB_{j}$ as
\begin{align}\label{eq:pblock}
	\cB_j=\{((i,s_i))_{i=0}^ n\in\sS_0^n: \max_{1\le i\le n} |s_i|< h_j\},\,\, \text{ for } \,\,i=1,2,\ldots,\ell. 
\end{align}
In other words, $\cB_j$ is the collection of random walk paths contained in the cylinder $\vB_j$ and the set $\cB_j\setminus \cB_{j-1}$ is the set of paths that exit the cylinder $\vB_{j-1}$ by time $n$ but not the cylinder $\vB_j$. Here $\ell, \{h_{i}, i\ge 1\}$ will depend on $n$ and the tail behavior of $\go$. We define $\vB_{0}=\cB_{0}=\emptyset$.  

We also need to recall the well known fact, that the probability of the set $\cB_j^c$ under the simple random walk path measure is bounded by $4\exp(-h_j^2/2n)$ :
\begin{lem}[cf.~Feller (1968)]\label{Feller} 
	Under the simple random walk measure $\vP_{n}$ and for any positive integer $r$ we have
	\[
		\vP_n\big(\max_{0\le i\le n} s_i \ge r\big) = 2\vP_n(s_n\ge r) - \vP_n(s_n=r).
	\]
	Thus,
	\[
		\vP_n\big(\max_{0\le i\le n} |s_i| \ge r\big) \le  2\vP_n(s_n\ge r) \le 4 e^{-r^2/2n}.
	\]
\end{lem}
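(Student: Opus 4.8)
The plan is to obtain the identity from the classical reflection principle and then derive the exponential bound by symmetry and a Chernoff estimate; this is the standard argument of Feller, but I spell out the steps. Write $M_n:=\max_{0\le i\le n}s_i$ and, for a positive integer $r$, let $\tau:=\min\{i\ge 0:s_i=r\}$ be the first passage time to level $r$, with $\tau:=\infty$ if this does not occur by time $n$. Since the walk starts at $0$ and has $\pm1$ increments, $\{M_n\ge r\}=\{\tau\le n\}$. The key step is the reflection map: on the event $\{\tau\le n\}$, send a path $(s_i)_{i=0}^n$ to $(\tilde s_i)_{i=0}^n$ defined by $\tilde s_i=s_i$ for $i\le\tau$ and $\tilde s_i=2r-s_i$ for $i>\tau$. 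Because $\vP_n$ is the uniform measure on $\sS_0^n$ and this map is a measure-preserving involution of the set of paths with $\tau\le n$, for every integer $y<r$ it carries $\{\tau\le n,\,s_n=y\}$ bijectively onto $\{\tau\le n,\,s_n=2r-y\}=\{s_n=2r-y\}$ (the last equality because $2r-y>r$ forces the walk to cross level $r$, hence $M_n\ge r$), so
\[
\vP_n(M_n\ge r,\,s_n=y)=\vP_n(s_n=2r-y),\qquad y<r.
\]

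Next I would sum over the endpoint. For $y\ge r$ one trivially has $\{s_n=y\}\subseteq\{M_n\ge r\}$, while for $y<r$ the displayed identity applies; hence
\[
\vP_n(M_n\ge r)=\vP_n(s_n\ge r)+\sum_{y<r}\vP_n(s_n=2r-y)=\vP_n(s_n\ge r)+\vP_n(s_n>r),
\]
and writing $\vP_n(s_n>r)=\vP_n(s_n\ge r)-\vP_n(s_n=r)$ gives the first assertion. For the second, the symmetry $s\mapsto-s$ of $\vP_n$ yields $\vP_n(\max_{0\le i\le n}|s_i|\ge r)\le\vP_n(M_n\ge r)+\vP_n(\max_{0\le i\le n}(-s_i)\ge r)=2\vP_n(M_n\ge r)$, and dropping the nonnegative term $\vP_n(s_n=r)$ in the identity gives $\vP_n(M_n\ge r)\le 2\vP_n(s_n\ge r)$. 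Finally, since $s_n$ is a sum of $n$ i.i.d.\ increments uniform on $\pms$, the Chernoff bound $\vP_n(s_n\ge r)\le e^{-\lambda r}\la e^{\lambda s_n}\ra=e^{-\lambda r}(\cosh\lambda)^n\le e^{-\lambda r+n\lambda^2/2}$, optimized at $\lambda=r/n$, gives $\vP_n(s_n\ge r)\le e^{-r^2/2n}$, so combining the inequalities, $\vP_n(\max_{0\le i\le n}|s_i|\ge r)\le 4e^{-r^2/2n}$.

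There is no genuine obstacle here, as this is textbook material; the only points needing slight care are the parity bookkeeping in the reflection step (that $y$, $2r-y$ and $n$ have the same parity, and the distinction between $s_n>r$ and $s_n\ge r$) and verifying that the Chernoff exponent is exactly $r^2/2n$, which follows from $\cosh\lambda\le e^{\lambda^2/2}$ and the choice $\lambda=r/n$.
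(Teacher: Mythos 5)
Your proof is correct, and it is the standard reflection-principle argument of Feller; the paper itself gives no proof, citing Feller (1968), so there is nothing to compare beyond noting that yours is the canonical derivation. One small point: your chain of inequalities actually yields $\vP_n(\max_{0\le i\le n}|s_i|\ge r)\le 2\vP_n(M_n\ge r)\le 4\vP_n(s_n\ge r)\le 4e^{-r^2/2n}$, whereas the lemma's displayed middle term is $2\vP_n(s_n\ge r)$ — that intermediate bound does not follow from (and is not in general implied by) the symmetry-plus-union-bound argument, and appears to be a typo in the statement; the final bound $4e^{-r^2/2n}$, which is all the paper uses, is exactly what you prove.
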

\vskip 4mm
Let us now define $Z^{\go}_{n,\gb_n}(\cA):= 2^{-n}\sum_{\vs\in \cA} \exp(\gb_n H^{\go}(\vs)) \text{ for any } \cA\subseteq \sS_{0}^n$ so that
\begin{equation}\label{deco}
	Z^{\go}_{n,\gb_n} = \sum_{j=1}^{\ell} Z^{\go}_{n,\gb_n}(\cB_{j}\setminus \cB_{j-1}) \,\,\text{ and }\,\,
	Z^{\tgo}_{n,\gb_n} = \sum_{j=1}^{\ell} Z^{\tgo}_{n,\gb_n}(\cB_{j}\setminus \cB_{j-1}).
\end{equation}
For every $j=1,2,\ldots,\ell$, we define 
\begin{align}\label{eq:excess}
	M_j :=\sum_{v\in \vB_j} (\go_v - \tgo_v) = \sum_{v\in \vB_j} \go_v \ind_{\{\go_v>k_n\}} ,
\end{align}
as the total excess weight in block $\vB_j$. Note that for any path $\vs=((i,s_{i}))_{i=0}^{n}\in \cB_j$ we have $0\le H^{\go}(\vs) -  H^{\tgo}(\vs) \le M_j$ and thus
\begin{equation}\label{deco-est}
	Z^{\go}_{n,\gb_n}(\cB_{j}\setminus \cB_{j-1})  \le e^{\gb_n M_j} Z^{\tgo}_{n,\gb_n}(\cB_{j}\setminus \cB_{j-1}) \text{ for all } j=1,2,\ldots,\ell.
\end{equation}
Combining  \eqref{deco}, \eqref{eq:excess} and \eqref{deco-est} we have

\begin{lem}\label{lem:compare}
	For any real number $A>0$ we have
	\begin{align*}
		  & \pr(\log Z_{n,\gb_n}^\go  - \log  Z_{n,\gb_n}^{\tgo}>A) \\
		  &\qquad\qquad
		\le 
		|\vB_1|\cdot\bar{F}(k_n)
		+ 
		\pr\Big(\sum_{j=2}^{\ell}\exp( \gb_n M_j)\, \vP_{n,\gb_n}^{\tgo}(\cB_{j-1}^c) > A \Big).
	\end{align*}
\end{lem}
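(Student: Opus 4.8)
The plan is to start from the decomposition \eqref{deco} and the pathwise bound \eqref{deco-est}, and to isolate the contribution of the innermost block $\vB_1$ from that of the shells $\cB_j\setminus\cB_{j-1}$ for $j\ge 2$. First I would handle the innermost block: on the event that no weight $\go_v$ with $v\in\vB_1$ exceeds $k_n$, we have $\tgo_v=\go_v$ for all $v\in\vB_1$, hence $H^\go(\vs)=H^\tgo(\vs)$ for every path $\vs\in\cB_1$, and so $Z^\go_{n,\gb_n}(\cB_1)=Z^\tgo_{n,\gb_n}(\cB_1)$. By a union bound this event fails with probability at most $|\vB_1|\cdot\bar F(k_n)$, which produces the first term on the right-hand side. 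So on the complementary (good) event, $Z^\go_{n,\gb_n}(\cB_1)=Z^\tgo_{n,\gb_n}(\cB_1)\le Z^\tgo_{n,\gb_n}$, and this discrepancy contributes nothing.

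Next I would treat the outer shells. For $j\ge 2$, the bound \eqref{deco-est} gives $Z^\go_{n,\gb_n}(\cB_j\setminus\cB_{j-1})\le e^{\gb_nM_j}Z^\tgo_{n,\gb_n}(\cB_j\setminus\cB_{j-1})\le e^{\gb_nM_j}Z^\tgo_{n,\gb_n}(\cB_{j-1}^c)$, since $\cB_j\setminus\cB_{j-1}\subseteq\cB_{j-1}^c$. Dividing by $Z^\tgo_{n,\gb_n}$ turns $Z^\tgo_{n,\gb_n}(\cB_{j-1}^c)/Z^\tgo_{n,\gb_n}$ into the polymer probability $\vP^\tgo_{n,\gb_n}(\cB_{j-1}^c)$. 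Combining with the innermost-block analysis, on the good event we get
\begin{align*}
\frac{Z^\go_{n,\gb_n}}{Z^\tgo_{n,\gb_n}}
&= \frac{Z^\go_{n,\gb_n}(\cB_1)}{Z^\tgo_{n,\gb_n}} + \sum_{j=2}^{\ell}\frac{Z^\go_{n,\gb_n}(\cB_j\setminus\cB_{j-1})}{Z^\tgo_{n,\gb_n}}
\le 1 + \sum_{j=2}^{\ell} e^{\gb_n M_j}\,\vP^\tgo_{n,\gb_n}(\cB_{j-1}^c).
\end{align*}
Therefore $\log Z^\go_{n,\gb_n}-\log Z^\tgo_{n,\gb_n}\le\log\bigl(1+\sum_{j=2}^\ell e^{\gb_nM_j}\vP^\tgo_{n,\gb_n}(\cB_{j-1}^c)\bigr)$ on the good event. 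Since $\log(1+x)\le x$ for $x\ge 0$, the event $\{\log Z^\go_{n,\gb_n}-\log Z^\tgo_{n,\gb_n}>A\}$ is contained, up to the bad event of probability $|\vB_1|\bar F(k_n)$, in $\{\sum_{j=2}^\ell e^{\gb_nM_j}\vP^\tgo_{n,\gb_n}(\cB_{j-1}^c)>A\}$. A final union bound over the two events gives exactly the claimed inequality.

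There is no serious obstacle here: the lemma is essentially a bookkeeping step that repackages \eqref{deco}, \eqref{eq:excess} and \eqref{deco-est} into a convenient probabilistic estimate, and the only genuine input is the elementary observation that truncation is invisible inside $\vB_1$ off an event of probability $|\vB_1|\bar F(k_n)$ together with the elementary inequalities $\log(1+x)\le x$ and $\cB_j\setminus\cB_{j-1}\subseteq\cB_{j-1}^c$. The point worth a little care is ensuring that the normalisation by the full partition function $Z^\tgo_{n,\gb_n}$ (rather than by $Z^\tgo_{n,\gb_n}(\cB_j\setminus\cB_{j-1})$) is used consistently so that the shell contributions genuinely become polymer probabilities $\vP^\tgo_{n,\gb_n}(\cB_{j-1}^c)$; the real analytic work — bounding $M_j$ via the tail assumption \eqref{defA} and bounding $\vP^\tgo_{n,\gb_n}(\cB_{j-1}^c)$ via the energy–entropy/Feller estimate of Lemma~\ref{Feller} — is deferred to the subsequent multi-scale argument and is not needed for this statement.
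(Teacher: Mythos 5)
Your proof is correct and follows essentially the same route as the paper: decompose via \eqref{deco}, control the shells with \eqref{deco-est} and the inclusion $\cB_j\setminus\cB_{j-1}\subseteq\cB_{j-1}^c$, kill the $\cB_1$ discrepancy on the event $\{\max_{v\in\vB_1}\go_v\le k_n\}$ via a union bound, and finish with $\log(1+x)\le x$. The only cosmetic difference is that you restrict to the good event from the outset, whereas the paper carries the term $\bigl(Z^\go_{n,\gb_n}(\cB_1)-Z^{\tgo}_{n,\gb_n}(\cB_1)\bigr)/Z^\tgo_{n,\gb_n}$ explicitly until the last line.
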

\begin{proof}
Using decomposition \eqref{deco} and estimate \eqref{deco-est}, in the second inequality below, we have
\begin{align*}
	1\le \frac{ Z_{n,\gb_n}^\go }{ Z_{n,\gb_n}^{\tgo} } 
	  & = 1 + \frac{ Z_{n,\gb_n}^\go - Z_{n,\gb_n}^\tgo }{ Z_{n,\gb_n}^\tgo }                                                                                                                \\
	  & \le 1 + \frac{ Z^\go_{n,\gb_n}(\cB_1) - Z^{\tgo}_{n,\gb_n}(\cB_1) }{ Z^\tgo_{n,\gb_n} } + \sum_{j=2}^{\ell}  (e^{\gb_n M_j}-1)\cdot \vP_{n,\gb_n}^{\tgo}(\cB_{j}\setminus \cB_{j-1}) \\
	  & \le 1 + \frac{ Z^\go_{n,\gb_n}(\cB_1) - Z^{\tgo}_{n,\gb_n}(\cB_1) }{ Z^\tgo_{n,\gb_n} } + \sum_{j=2}^{\ell} e^{\gb_n M_j}\cdot \vP_{n,\gb_n}^{\tgo}(\cB_{j}\setminus \cB_{j-1}).     
\end{align*}
Using $\log(1+ x)\le x$, this immediately implies that 
\begin{align}\label{eq:compare}
	0\le \log Z_{n,\gb_n}^\go  - \log  Z_{n,\gb_n}^{\tgo} \le \frac{ Z^\go_{n,\gb_n}(\cB_1) - Z^{\tgo}_{n,\gb_n}(\cB_1) }{ Z^\tgo_{n,\gb_n} } + \sum_{j=2}^{\ell} e^{\gb_n M_j}\cdot \vP_{n,\gb_n}^{\tgo}(\cB_{j}\setminus \cB_{j-1}). 
\end{align}
	
Note that on the event 
$
\{\max_{v\in \vB_1} \go_v \le k_n \}
$
we have 
$
Z^{\go}_{n,\gb_n}(\cB_1)  = Z^{\tgo}_{n,\gb_n}(\cB_1).
$ 
Moreover, we have 
$\pr(\max_{v\in \vB_1} \go_v > k_n) \le |\vB_1|\cdot\bar{F}(k_n)$ and our claim follows.
\end{proof}
\vskip 4mm
Now,
$
\E[\go\ind_{\{\go>k_n\}}] =k_n^{1-\ga}L_{1}(k_n)
$
with the slowly varying function 
\[
	L_1(t):= L(t) + \int_1^{\infty}x^{-\ga}L(tx)\,\dd x.
\]
For $\ga\ne1$, it is easy to check that there exist constants $c>1, t_0>1$, such that
\begin{equation}\label{slowcomp}
	L(t)\le L_{1}(t)\le cL(t) ,\,\,\text{ for all }\, t\ge t_0. 
\end{equation}
Clearly $\E[M_j]=\E[\sum_{v\in\vB_j} \go\ind_{\{\go> k_n\}}]=|\vB_j| k_n^{1-\ga}L_1(k_n)=nh_j\,k_n^{1-\ga}L_1(k_n)$.  Thus, if 
\[
	\gb_n\,nh_j\, k_n^{1-\ga} \ll h_{j-1}^2/n,
\]
then the expected energy accumulated by the path (this corresponds to the left hand side of the above inequality) will be dominated by the entropy cost (that corresponds to the right hand side of the above inequality). Hence, the contribution of the set of paths in $\cB_{j}\setminus \cB_{j-1}$ will be small. The following Proposition~\ref{prop:bound} makes this argument rigorous. Recall that the function $m:(1,\infty)\mapsto\dR$ from \eqref{mt}, which satisfies $t\bar{F}(m(t))=1$.

\begin{prop}\label{prop:bound}
	Consider a sequence $(\gb_n)_{n\ge 1}$ such that $\sup_{n\ge 1}\gb_n \max\{n^{1/4},m(n^{3/2})\}<\infty$. Assume that $\ga>1/2$ and let 
	\[
		k_n=
		\begin{cases}
			\gb_n^{-1}                                                   &, \text{ when } \ga>6,        \\
			\gb_n^{-1}\,\frac{m(n^{3/2}(\log n)^{\eta})}{ m(n^{3/2})} &, \text{  when } \ga\in (\frac12,6] 
		\end{cases}
	\]
	for some $\eta\in(1/2,\ga)$. Assume, also, that 
	\begin{equation}\label{posi-est}
		\lim_{\eps\,\downarrow \,0}\limsup_{n\to\infty}\pr\left(Z_{n,\gb_n}^{\tgo}< \eps \E[Z_{n,\gb_n}^{\tgo}]\right) = 0.
	\end{equation}
	Then, for any $ a\ge 0$
	\[
		n^{a}\big(\log Z_{n,\gb_n}^\go  - \log  Z_{n,\gb_n}^{\tgo}\big)\xrightarrow[n\to\infty]{P} 0
	\]
\end{prop}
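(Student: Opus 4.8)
The plan is to run the multi-scale energy-entropy argument sketched in Section~\ref{strategy}, controlling the right-hand side of Lemma~\ref{lem:compare} scale by scale. First I would fix the scales. Since assumption~\eqref{posi-est} reduces the problem to showing $n^a\sum_{j=2}^{\ell}e^{\gb_n M_j}\vP_{n,\gb_n}^{\tgo}(\cB_{j-1}^c)\to 0$ in probability (the boundary term $|\vB_1|\bar F(k_n)$ being handled directly from the choice of $k_n$ and the regular variation in~\eqref{defA}), I set $h_j^2/n = n^{\gd_j}$ for an increasing sequence of exponents $\gd_j>0$, with $h_\ell$ just above $n$. The number of scales $\ell$ is finite when $\ga>6$ and grows like $\log\log n$ (or $\log n$) when $\ga\in(\tfrac12,6]$, because in the latter regime the cutoff $k_n=\gb_n^{-1}m(n^{3/2}(\log n)^\eta)/m(n^{3/2})$ only barely beats the entropy, so the exponents $\gd_j$ can only be increased by a bounded multiplicative factor at each step and one needs many steps to climb from $\gd_1$ (slightly positive) to the value where $h_\ell\gtrsim n$.

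Next I would estimate each summand. Bounding $\vP_{n,\gb_n}^{\tgo}(\cB_{j-1}^c)\le Z_{n,\gb_n}^{\tgo}(\cB_{j-1}^c)/Z_{n,\gb_n}^{\tgo}$ and using Lemma~\ref{Feller} together with a first-moment bound on $Z_{n,\gb_n}^{\tgo}(\cB_{j-1}^c)$ (the truncated weights have controlled exponential moments by Lemma~\ref{lem:comp}), one gets, on the likely event from~\eqref{posi-est}, an estimate of the form $\vP_{n,\gb_n}^{\tgo}(\cB_{j-1}^c)\lesssim \exp(-c\,h_{j-1}^2/n)=\exp(-c\,n^{\gd_{j-1}})$ up to a polynomial prefactor coming from $\E[Z_{n,\gb_n}^{\tgo}]$. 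For the energy factor $e^{\gb_n M_j}$, I use Markov/Chernoff on $M_j=\sum_{v\in\vB_j}\go_v\ind_{\{\go_v>k_n\}}$: since $|\vB_j|=nh_j$ and $\E[M_j]=nh_j\,k_n^{1-\ga}L_1(k_n)$ (by~\eqref{slowcomp}), and since $k_n\gb_n$ is bounded, one shows $\gb_n M_j$ concentrates around $\gb_n\E[M_j]=O(n^{(6-\ga)/4+\gd_j/2})$ when $\ga>6$ (cf.~\eqref{energy-in-box}), respectively a $\log$-corrected analogue when $\ga\le 6$; a suitable large-deviation bound for the heavy-tailed sum $M_j$ (splitting off the largest summand, which is itself $O(|\vB_j|^{1/\ga}k_n^{1-?})$-controlled by the choice of $k_n$) gives that $e^{\gb_n M_j}\le\exp(o(n^{\gd_{j-1}}))$ with probability $1-o(1/\ell)$. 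Here the choice $k_n=\gb_n^{-1}m(n^{3/2}(\log n)^\eta)/m(n^{3/2})$ for $\ga\le 6$ is exactly what makes $\gb_n\E[M_j]$ smaller than $h_{j-1}^2/n=n^{\gd_{j-1}}$ by a logarithmic margin, which is then spread over the $\asymp\log n$ scales.

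Combining these two bounds, the $j$-th summand is at most $\exp(\gb_n\E[M_j])\cdot\exp(-c\,n^{\gd_{j-1}})\cdot(\text{poly})$, and the constraint $\gd_j/2<\gd_{j-1}$ (iterable because at each step we only need to double-ish the exponent) ensures the entropy term dominates: each summand is $\exp(-c'n^{\gd_{j-1}})$. Summing over $j=2,\dots,\ell$ — using that $\ell$ is at most $O(\log\log n)$ or $O(\log n)$ so the union bound over the $o(1/\ell)$-failure events is still $o(1)$, and the polynomial and $\log$ prefactors are absorbed by the stretched-exponential decay — gives that the whole sum is $o(n^{-a})$ with probability tending to one, which is the claim. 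I expect the main obstacle to be the $\ga\in(\tfrac12,6]$ case: keeping careful track of the slowly varying functions through the $\log$-number of iterations, verifying that the per-scale failure probabilities genuinely sum to $o(1)$ despite the growing number of scales, and checking the heavy-tailed large-deviation estimate for $M_j$ when $\ga$ is small (so $M_j$ may fail to have a second moment), where one must isolate the extreme order statistics rather than rely on variance bounds. The $\ga>6$ case is comparatively routine since $\ell$ is bounded and $\E[\go_+^\ga]<\infty$ makes the energy estimate~\eqref{energy-in-box} immediate.
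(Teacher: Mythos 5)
Your overall architecture is the paper's: the decomposition of Lemma~\ref{lem:compare}, a ladder of cylinders whose width exponents roughly double at each step, an energy-versus-entropy comparison at each scale, and a union bound over the scales, with assumption~\eqref{posi-est} used together with a first-moment (Markov) bound on $Z^{\tgo}_{n,\gb_n}(\cB_{j-1}^c)$ and Lemma~\ref{Feller} to control $\vP^{\tgo}_{n,\gb_n}(\cB_{j-1}^c)$ by $\text{const.}\,\eps^{-2}\ell\, e^{-h_{j-1}^2/2n}$. The one substantive place you depart from the paper is the energy term, and there you make the step both harder and, as stated, incorrect. You propose to show that $\gb_n M_j$ \emph{concentrates} around $\gb_n\E[M_j]$ via a heavy-tailed large-deviation estimate, and you flag this as the main obstacle. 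No concentration holds (for $\ga<2$ the sum $M_j=\sum_{v\in\vB_j}\go_v\ind_{\go_v>k_n}$ is dominated by its largest summand and fluctuates on the scale of its mean), and none is needed: since $M_j\ge0$, the only input required is the one-sided bound $\pr\big(\gb_n M_j>T_j\big)\le \gb_n\E[M_j]/T_j$ with $T_j\asymp h_{j-1}^2/n$, i.e.\ the plain Markov inequality; on the complementary event $e^{\gb_n M_j}\le e^{T_j}$ is exactly cancelled by $\vP(\cB_{j-1}^c)\le 4e^{-h_{j-1}^2/2n}$. That single line is the paper's entire treatment of the energy, and it already yields a per-scale failure probability of order $(\log n)^{-c}$, which comfortably beats the $\ell\asymp\log\log n$ scales. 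Your instinct to isolate the extreme order statistic is genuinely relevant only for $\ga\le 1$, where $\E[\go\ind_{\go>k_n}]=\infty$ and one must first remove the event $\{\max_{v\in\vB_j}\go_v>s\}$ (cost $|\vB_j|\bar F(s)$) before applying Markov to the part truncated at an intermediate level $s$.

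There is also a gap in your scale choice for $\ga\in(\tfrac12,6]$. In the relevant regime $\gb_n\asymp m(n^{3/2})^{-1}$ (otherwise one takes $\ell=1$, $h_1=n$ outright), the first term of Lemma~\ref{lem:compare} forces $|\vB_1|\bar F(k_n)\approx n^{-1/2}h_1(\log n)^{-\eta}=o(1)$, so the first cylinder can only be polylogarithmically wider than $\sqrt n$; your parametrization $h_j^2/n=n^{\gd_j}$ with a genuinely positive $\gd_1$ therefore cannot start. The ladder must be taken as $h_j=\sqrt n(\log n)^{\gd_j}$ with the exponents $\gd_j$ doubling, whence $\ell\asymp\log\log n$ steps, and the per-scale margin is the logarithmic gap $(\log n)^{\gd_j-2\gd_{j-1}-\eta(1-\frac{1}{(1-\vartheta)\ga})}$, which the paper shows has a strictly negative exponent using the Karamata bound~\eqref{kara} and $\eta\in(\tfrac12,\ga)$. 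With these two repairs your plan coincides with the paper's proof.
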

\begin{proof}
Recall the definitions of $\vB_{j},\cB_{j},M_{j},j=1,2,\ldots,\ell$, from equations~\eqref{eq:block}, \eqref{eq:pblock} and  \eqref{eq:excess} for a given sequence of heights $\sqrt{n}\ll h_1 < h_2<\cdots <h_{\ell} $ with $h_{\ell-1}<n\le h_{\ell}$, that will be determined later on. We distinguish between three cases:
\vskip 2mm

\noindent{\bf Case 1.}\label{case1} ($\ga>8$) First, we consider the case $\ga>8$ and choose $k_n\ge n^{1/4}$. If we take $\ell=1$ with $h_{1}=n$, we have by Lemma \ref{lem:compare}, {with $A=n^{-a}$ and arbitrary $a>0$} (notice that in this case the second term in the inequality does not appear) that 
\begin{align*}
 \pr(\log Z_{n,\gb_n}^\go  - \log  Z_{n,\gb_n}^{\tgo}> n^{-a}) 
	&\le |\vB_1|\cdot\bar{F}(k_n) \le |\vB_1|\bar{F}(n^{1/4})\\
        &= n(2n+1) \,n^{-\ga/4}L(n^{1/4})\longrightarrow 0,\,\, \text{ as }\,\, n\to \infty,
\end{align*}
which proves the claim in this case.
 \vskip 2mm
Let us now work towards the case $\ga\le 8$. To prepare, we start by using Lemma \ref{lem:compare} with the value $A$, therein, chosen again to be $n^{-a}$.
Choosing an arbitrary number $\eps>0$ and making elementary probability estimates, we have
\begin{align}\label{boundes0}
\begin{split}
 \pr(\log Z_{n,\gb_n}^\go  - \log  Z_{n,\gb_n}^{\tgo}> n^{-a})
& \le |\vB_1|\cdot\bar{F}(k_n)
+ 
\pr\Big(\sum_{j=2}^{\ell}\exp( \gb_n M_j) \vP_{n,\gb_n}^{\tgo}(\cB_{j-1}^c) > n^{-a} \Big)\\
&\le  |\vB_1|\cdot\bar{F}(k_n)
+\pr\Big(Z_{n,\gb_n}^{\tgo}< \eps \E[Z_{n,\gb_n}^{\tgo}]\Big) \\
&\qquad\qquad+ \sum_{j=2}^{\ell}\pr\Big( \gb_n M_j +\log \frac{Z_{n,\gb_n}^{\tgo}(\cB_{j-1}^c)}{\E[Z_{n,\gb_n}^{\tgo}]}  > \log(\eps \,\ell^{-1}n^{-a})\Big).
\end{split}
\end{align}
Furthermore,  we estimate
\begin{align}\label{boundes1}
&\pr\Big( \gb_n M_j +\log \frac{Z_{n,\gb_n}^{\tgo}(\cB_{j-1}^c)}{\E[Z_{n,\gb_n}^{\tgo}]}  > \log(\eps\, \ell^{-1} n^{-a})\Big) \nonumber\\
	  & \qquad\le \pr\Big(Z_{n,\gb_n}^{\tgo}( \cB_{j-1}^c) \ge \eps^{-1}\ell\,\,\vP(\cB_{j-1}^c)\E[Z_{n,\gb_n}^{\tgo}]\Big)           
	+\pr\Big( \gb_n M_j  > \log\big(\,(\eps \ell^{-1})^2 \,n^{-a}\big) - \log \vP(\cB_{j-1}^c) \Big).
\end{align}
Assuming that $ \log\big(\,(\eps \ell^{-1})^2 \,n^{-a}\big) - \log \vP(\cB_{j-1}^c)>0$ (this assumption will be satisfied by the choices of the parameters $\eps,\ell,h_j$), we use Chebyshev's inequality in both terms of \eqref{boundes1} and the fact that
\[
	\E[Z_{n,\gb_n}^{\tgo}(\cA)] = \vP(\cA) \E[Z_{n,\gb_n}^{\tgo}], \quad \text{ for any } \cA\subseteq \sS_{0}^{n}.
\]
to estimate  \eqref{boundes1} as
\begin{align}\label{boundes2}	
           \pr\Big( \gb_n M_j +\log \frac{Z_{n,\gb_n}^{\tgo}(\cB_{j-1}^c)}{\E[Z_{n,\gb_n}^{\tgo}]}  > \log(\eps\, \ell^{-1} n^{-a}) \,\Big) 
          &\le \frac{\eps}{\ell} + \frac{\gb_n  \E[M_j] }{ \log\big(\,(\eps \ell^{-1})^2 \,n^{-a}\big) - \log \vP( \cB_{j-1}^c)}\nonumber\\
	  & \le \frac{\eps}{\ell}  + \frac{\gb_n |\vB_j|\E[{\go\ind_{\{\go>k_n\}} }]}{ \log\big(\,(\eps \ell^{-1})^2 \,n^{-a}\big) - \log \vP( \cB_{j-1}^c)}\nonumber\\
	  & \le \frac{\eps}{\ell} +  \frac{\gb_n |\vB_j|\E[{\go\ind_{\{\go>k_n\}} }]}{ \log\big(\,(\eps \ell^{-1})^2 \,n^{-a}/4\big) +h_{j-1}^2/2n}
\end{align}
where the last inequality follows by the result that
$
\vP(\cB_{j}^c)\le 4\exp(-h_j^2/2n),
$
for all $j\ge 1$, by Lemma \ref{Feller}.
Combining \eqref{boundes2} and \eqref{boundes0} we have
\begin{align}
	  & \pr(\log Z_{n,\gb_n}^\go  - \log  Z_{n,\gb_n}^{\tgo}> n^{-a}) \notag \\
	  & \quad\le                                                                     
	|\vB_1|\,\bar{F}(k_n)
	+
	\pr\left(Z_{n,\gb_n}^{\tgo}< \eps \E[Z_{n,\gb_n}^{\tgo}]\right) 
	+ \,
	\eps\, + \sum_{j=2}^{\ell}  \frac{\gb_n |\vB_j|\E[{\go\ind_{\{\go>k_n\}} }]}{ \log\big(\,(\eps \ell^{-1})^2 \,n^{-a}/4\big) +h_{j-1}^2/2n},
	\label{eq:pbd1}
\end{align}
We are now ready for
\vskip 2mm

\noindent{\bf Case 2.} ($6<\ga\le 8$)  Here  the cutoff is $k_{n}=\gb_n^{-1}$. Note that if $\gb_n\ll n^{-2/\ga}$ then as in Case~1, we can take $h_{1}:=n$ with $\ell=1$ to get the result and avoid the multi-scale argument. Thus, w.l.o.g.~we can assume that $\gb_{n}\gg n^{-2/\ga}$. From the assumption $\sup_{n\ge 1}\gb_n \max\{n^{1/4},m(n^{3/2})\}<\infty$, we have $k_n\ge \text{const.} \,n^{1/4}$ and we choose
$$h_{j}=\lfloor n^{(1+\gd_j)/2}\rfloor,\qquad \text{for}\,\, j\ge 1,$$
with
\begin{align*}
	\gd_{1}&=\frac{1}{4}(\ga-6),\,\, \qquad\qquad\qquad\qquad\qquad\quad \text{and} \notag\\
	\gd_{j}&=2\gd_{j-1}+\frac{\ga-6}{4}=\frac{2^j-1}{4}(\ga-6),\,\,\,\quad\text{ for } j=2,3,\ldots,\ell  ,
\end{align*}
and $\ell=\lceil\log_{2}(1+4/(\ga-6))\rceil$, so that $\gd_{\ell}\ge 1$, which guarantees $h_\ell\geq n$.
We notice that, for all large enough $n$, we have
\begin{align}\label{pbd1denom}
 \log\big(\,(\eps \ell^{-1})^2 \,n^{-a}/4\big) +h_{j-1}^2/2n \geq h_{j-1}^2/4n.
\end{align}
 Moreover, we have that
 \begin{align}\label{pbd11}
  |\vB_1|\bar{F}(k_{n}) &\le \text{const}. \,|\vB_1|\bar{F}(n^{1/4})\le \text{const}.\, nh_{1}\cdot\,n^{-\ga/4}L(n^{1/4})\notag\\
  &\le\text{const}.\, n^{\frac{6-\alpha}{8}} L(n^{1/4}) =o(1) ,
  \end{align}
  since $\alpha>6$.
Using the choice $k_n=\gb_n^{-1}$ and the assumption that $\gb_{n}n^{1/4}$ is bounded we also have 
\begin{align}
	\gb_n |\vB_j|\E[\go\ind_{\{\go>k_n\}}] &=   \gb_n\,(2nh_j)\, k_n^{1-\ga}L_1(k_n)=
	\,(2nh_j)\, \gb_n^{\ga}L_1(k_n)\notag\\
	&\le\text{const.} n^{1+\frac{1+\gd_j}{2}-\frac{\ga}{4}} L_1(k_n)
	= \text{const.} \,n^{\frac{\gd_j}{2}+\frac{6-\ga}{4}} L_1(n^{1/4})\notag\\
	&= \text{const.} \,n^{\gd_{j-1}+\frac{6-\ga}{8}} L_1(n^{1/4})\notag\\
	&= \text{const.} \,n^{-1} h_{j-1}^2 n^{\frac{6-\ga}{8}} L_1(n^{1/4}).\label{pbd13}
\end{align}
 Inserting the bounds \eqref{pbd1denom}, \eqref{pbd11},\eqref{pbd13} into \eqref{eq:pbd1} we obtain
 \begin{align*}
	&\pr(\log Z_{n,\gb_n}^\go  - \log  Z_{n,\gb_n}^{\tgo}> n^{-a})  \\
	&   \qquad   \qquad
	\le \text{const.}n^{\frac{6-\ga}{8}}L(n^{1/4})+ \pr\left(Z_{n,\gb_n}^{\tgo}< \eps \E[Z_{n,\gb_n}^{\tgo}]\right) + \eps +\text{const.} n^{\frac{6-\ga}{8}}L_1(n^{1/4})
\end{align*}
and so
\begin{align*}
	\limsup_{n\to\infty}\pr(\log Z_{n,\gb_n}^\go  - \log  Z_{n,\gb_n}^{\tgo}> n^{-a})        
	\le \limsup_{n\to\infty} \pr\left(Z_{n,\gb_n}^{\tgo}< \eps \E[Z_{n,\gb_n}^{\tgo}]\right) + \eps. 
\end{align*}
Taking $\eps\downarrow0$ we have the result for $\ga\in (6,8]$, {by the assumption that in this limit the first term on the right hand side vanishes}. \\

\noindent{\bf Case 3.} ($1/2<\ga\le 6$) Now we consider the case when $1/2<\ga\le 6$. We choose
\[
	k_n=\frac{m(n^{3/2}(\log n)^{\eta})}{\gb_n m(n^{3/2})} \ge \text{const.} m(n^{3/2}(\log n)^{\eta}) ,
\]
for some $\eta\in(1/2,\ga)$.  As before, we can assume that $\gb_n \gg n^{-2/\ga}$, otherwise the proof is trivial. 
We now choose 
 $h_{j}=\lfloor\sqrt{n}(\log n)^{\gd_j}\rfloor$ for $j\ge 1$, with
\begin{align*}
	\gd_{j}&:=\frac12+\frac{(2^j-1)}{4}(2\eta-1),\,\qquad\qquad \text{for}\,\, j=1,2,\ldots,\ell,
\end{align*}
and $\ell=\ell_n:=\lceil\log_2(1+ \frac{2\log n}{(2\eta-1)\log\log n}) \rceil$, so that $h_{\ell}\ge n$. 
We first notice that this choice of $h_j$ implies, similarly to \eqref{pbd1denom}, that
\begin{align}\label{pbd1denom2}
\log\big(\,(\eps \ell^{-1})^2 \,n^{-a}/4\big) +h_{j-1}^2/2n 
&\ge \log\big(\,(\eps \ell^{-1})^2 \,n^{-a}/4\big) + \frac{n(\log n)^{2\gd_{j-1}}}{2n}\notag\\
& =\log\big(\,(\eps \ell^{-1})^2 \,n^{-a}/4\big) + \frac{1}{2}(\log n)^{1+\frac{2^{j-1}-1}{2}(2\eta-1) }\notag \\
&\ge  h_{j-1}^2/4n,
\end{align}
for every $j\ge 2$, since $\eta>1/2$. Moreover, we have 
\begin{align}\label{pbd21}
 |\vB_1|\bar{F}(k_n)&\le \text{const.} \,|\vB_1|\bar{F}(m(n^{3/2}(\log n)^{\eta}))\notag\\
& \le \text{const.}\, n^{-1/2}h_1\cdot (\log n)^{-\eta}=  \text{const.}\, (\log n)^{\frac{1}{4}-\frac{\eta}{2}}=o(1),
\end{align}
by the choice $\eta\in(1/2,\ga)$.
 Using \eqref{slowcomp} in the first inequality below and the definition of $m(\cdot)$ in the second equality, we also have
\begin{align}\label{pbd2}
	\gb_n |\vB_j|\E[\go\ind_{\{\go>k_n\}}] &\le \,\text{const.} \gb_n\,(nh_j)\,k_n^{1-\ga}L_1(k_n)
	\le \text{const.}\, nh_j \,\gb_nk_n \,\bar{F}(k_n)\notag\\
	& \le \text{const.}\, nh_j \,\gb_nk_n\, \bar{F}(m(n^{3/2} (\log n)^\eta))\notag\\
	&= \text{const.}  \, n h_j \,\,\frac{m(n^{3/2}(\log n)^\eta)}{m(n^{3/2})} \frac{1}{n^{3/2} (\log n)^\eta}\notag\\
	&=  \text{const.}\,\frac{h_{j-1}^2}{n}\, (\log n)^{\gd_j-2\gd_{j-1}-\eta} \,\,\,\frac{m(n^{3/2}(\log n)^\eta)}{m(n^{3/2})} ,
\end{align}
where in the last step we used the definition of $h_{j-1},h_j$. We will establish at the end of this proof that for any $\vartheta>0$, which we will choose to be small,
we have for large enough $n$ that 
\begin{equation}\label{kara}
\frac{m(n^{3/2}(\log n)^\eta)}{m(n^{3/2})} < (\log n)^{\frac{\eta}{(1-\vartheta)\ga}},
\end{equation}
and inserting this into \eqref{pbd2} we obtain
\begin{align}\label{pbd23}
	\gb_n |\vB_j|\E[\go\ind_{\{\go>k_n\}}]  \le \text{const.}\,\frac{h_{j-1}^2}{n}\, (\log n)^{\gd_j-2\gd_{j-1}-\eta(1-\frac{1}{(1-\vartheta)\ga})}  = o(1) \frac{h_{j-1}^2}{n},
\end{align}
where the last equality holds if we choose $\ga\in(1/2,2], \,1/2<\eta<\ga$ and $\vartheta$ appropriately small, since
\begin{align*}
\gd_j-2\gd_{j-1}-\eta\Big(1-\frac{1}{(1-\vartheta)\ga}\Big)&=
-\frac{3}{4}+\eta\frac{2-\ga(1-\vartheta)}{2\ga(1-\vartheta)},
\end{align*}
which for $\ga>1/2$ is bounded by
\begin{align*}
-\frac{3}{4}+\frac{2-\ga(1-\vartheta)}{2(1-\vartheta)}
=\frac{-(2\ga+3)(1-\vartheta)+4 }{4(1-\vartheta)} <0,
\end{align*}
for $\vartheta$ small enough. Inserting \eqref{pbd1denom2}, \eqref{pbd21} and \eqref{pbd23} into \eqref{eq:pbd1} we obtain
\begin{align*}
\pr(\log Z_{n,\gb_n}^\go  - \log  Z_{n,\gb_n}^{\tgo}> n^{-a}) &\le 
\text{const.}\, (\log n)^{\frac{1}{4}-\frac{\eta}{2}} +\pr\left(Z_{n,\gb_n}^{\tgo}< \eps \E[Z_{n,\gb_n}^{\tgo}]\right)\\
& \,\,+ \eps + 
\text{const.}\,\ell\,\, (\log n)^{\gd_j-2\gd_{j-1}-\eta(1-\frac{1}{(1-\vartheta)\ga})}.
\end{align*}
The choice of $\ell=\lceil\log_2(1+ \frac{2\log n}{(2\eta-1)\log\log n}) \rceil$, as well as of $(\gd_j)_{j\geq 1}$, $\eta\in(1/2,\ga)$ and (small) $\vartheta$, implies that
\begin{align*}
\limsup_{n\to\infty}\pr(\log Z_{n,\gb_n}^\go  - \log  Z_{n,\gb_n}^{\tgo}> n^{-a}) \le \eps + \limsup_{n\to\infty} \pr\left(Z_{n,\gb_n}^{\tgo}< \eps \E[Z_{n,\gb_n}^{\tgo}]\right),
\end{align*}
from which the result follows by taking $\eps\downarrow 0$. 
\vskip 2mm
It only remains to check the validity of \eqref{kara}. This will be done with the help of Karamata's theorem,
which states that any slowly varying function $L(\cdot)$ has the form $c(n)\exp\big( \int_{1}^n\eps(s)/s\, \dd s \big)$, where $c(\cdot)$ is an asymptotically constant function
and $\eps(\cdot)$ is asymptotically zero. Hence, by the definition of $m(\cdot)$ we have that
$$
\left(\frac{m\big(n^{3/2}(\log n)^\eta\big)}{m\big(n^{3/2}\big)}\right)^{-\ga} \frac{L\big( m\big(n^{3/2}(\log n)^\eta\big) \big) }{  L\big(m\big(n^{3/2}\big)\big) } =\frac{1}{(\log n)^\eta},
$$
which implies that
\begin{align*}
\frac{m\big(n^{3/2}(\log n)^\eta\big)}{m\big(n^{3/2}\big)}
&= (\log n)^\frac{\eta}{\ga} \,\left(\frac{L\big( m\big(n^{3/2}(\log n)^\eta\big) \big) }{  L\big(m\big(n^{3/2}\big)\big) } \right)^{\frac{1}{\ga}}\\
&\le \text{const.} \,(\log n)^\frac{\eta}{\ga}\, \exp\Big( \ga^{-1}\,\int_{m\big(n^{3/2}\big)}^{m\big(n^{3/2}(\log n)^\eta \big)} \eps(s)/s\, \dd s  \Big)\\
&< \text{const.} \,(\log n)^\frac{\eta}{\ga}\, \left(\frac{m\big(n^{3/2}(\log n)^\eta\big)}{m\big(n^{3/2}\big)}\right)^\vartheta,
\end{align*}
where in the last step we bound $\eps(s)$ by $\ga\vartheta$ with $\vartheta$ arbitrarily small, for all $s> m\big(n^{3/2}\big)$ and all $n$ large enough.
\end{proof}

\section{Proof of Theorem \ref{thm:14}}\label{sec:conv}
	 We need to establish that the limiting distribution of the (centered) partition function with truncated disorder converges to the desired quantity. We do this via a multilinear expansion of the
	 partition function and establish that these series converge. In order to check this
	  we may apply (a version of) Theorem~4.5 in~\cite{AKQ12} or Theorem 3.8 of \cite{CSZ15}:	
\begin{thm}\label{AKQ-CSZ}
 Let $(p(i,x))_{i\in\bN,x\in\bZ}$ be the transition kernel of a one-dimensional simple random walk, i.e. $p(i,x)=\vP(s_i=x)$, for $i\in \bN, x\in \bZ$. Let, also, $(\zeta_{i,x}^{(n)})_{i\in \bN,x\in \bZ}$ be a family of independent 
random variables, such that
\begin{itemize}
\item[$\bullet$] $\bE[\zeta^{(n)}]=0$,
\item[$\bullet$] $\var\big(\zeta^{(n)}\big)=1+o(1),\quad \text{as}\,\,\, n\to\infty$,
\item[$\bullet$] The family $((\zeta^{(n)})^2)_{n\ge1 }$ is uniformly integrable.
\end{itemize}
Then we have the convergence in distribution and in $L^2(\bP)$ of the multilinear series
\begin{align*}
1+\sum_{k=1}^n  
 \gb_n^k \sum_{\substack{1\leq i_1<\cdots< i_k\leq n\\ x_1,...,x_k\in \bZ}} 
 \prod_{j=1}^k p(i_j-i_{j-1},x_j-x_{j-1}) \,\, \zeta^{(n)}_{i_j,x_j} \longrightarrow \cZ_{\sqrt{2}\gb},
\end{align*}
whenever $\gb_nn^{1/4}\to \gb$, with $\cZ_{\sqrt{2}\gb}$ the Wiener chaos expansion \eqref{Wiener}.
\end{thm}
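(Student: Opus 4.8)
The plan is to deduce this from the polynomial-chaos convergence results of~\cite{AKQ12} (Theorem~4.5) and~\cite{CSZ15} (Theorem~3.8); the mild extra generality here — a variance $1+o(1)$ instead of exactly $1$, and only uniform integrability of $(\zeta^{(n)})^2$ rather than exponential moments — changes nothing essential. I would organise the argument in two steps: a uniform-in-$n$ $L^2$ truncation of the chaos series, and then the convergence, order by order, of the truncated series to the Wiener chaos expansion~\eqref{Wiener}.

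\textbf{Step 1 ($L^2$ truncation).} Denote by $Q^{(n)}_k$ the degree-$k$ term,
\begin{align*}
Q^{(n)}_k := \gb_n^k \sum_{\substack{0=i_0<i_1<\cdots<i_k\le n\\ x_1,\dots,x_k\in\bZ}} \Big(\prod_{j=1}^k p(i_j-i_{j-1},x_j-x_{j-1})\Big)\,\prod_{j=1}^k\zeta^{(n)}_{i_j,x_j},
\end{align*}
so the series equals $1+\sum_{k=1}^n Q^{(n)}_k$. Since the $\zeta^{(n)}_{i,x}$ are independent, mean zero, and the $k$ space-time points of any chain are distinct, the $Q^{(n)}_k$ are pairwise orthogonal in $L^2(\bP)$ and
\begin{align*}
\bE\big[(Q^{(n)}_k)^2\big] = \gb_n^{2k}\big(1+o(1)\big)^k\sum_{\substack{0=i_0<\cdots<i_k\le n\\ x_1,\dots,x_k\in\bZ}}\prod_{j=1}^k p(i_j-i_{j-1},x_j-x_{j-1})^2 .
\end{align*}
By the local central limit theorem, $\sqrt{i}\sum_x p(i,x)^2$ tends to a constant (the periodicity of the walk being responsible for the $\sqrt2$ that will appear in the limiting noise strength), so $\gb_n^2\sum_{1\le i\le n,\,x}p(i,x)^2$ is bounded in $n$ and, when $\gb_nn^{1/4}\to\gb$, converges to $(\sqrt2\gb)^2\int_0^1\!\int_\bR\rho(t,x)^2\,\dd x\,\dd t$. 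A standard chaining estimate for the ordered sum then gives $\bE[(Q^{(n)}_k)^2]\le C^k/\lfloor k/2\rfloor!$ uniformly for large $n$, the right-hand side being precisely the $L^2$-norm of the $k$-th term of~\eqref{Wiener}. Hence $\sum_{k>K}\bE[(Q^{(n)}_k)^2]\to 0$ as $K\to\infty$, uniformly in $n$; the analogous tail of~\eqref{Wiener} is small; and it suffices to establish, for each fixed $K$, the joint convergence of $(Q^{(n)}_1,\dots,Q^{(n)}_K)$ to the first $K$ terms of~\eqref{Wiener}. The same bound gives $L^2$-boundedness of $1+\sum_kQ^{(n)}_k$, which upgrades distributional convergence to convergence of second moments.

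\textbf{Step 2 (convergence of a fixed order).} Fix $k\le K$, rescale space-time to $[0,1]\times\bR$, and partition the index set into mesoscopic blocks of sides $\eps n$ and $\sqrt{\eps n}$. Writing $\xi^{(n),\eps}_Q$ for the normalised sum of the $\zeta^{(n)}_{i,x}$ over a block $Q$, I would first replace, inside $Q^{(n)}_k$, the kernel $\prod p(\cdot)$ by its value at the block centres and the $\zeta$-variables by the $\xi^{(n),\eps}$-variables; by a second-moment estimate as in Step~1, using the equicontinuity of $\rho$ on compacts and the diagonal bound $\sum_x p(i,x)^2=O(i^{-1/2})$, this replacement costs $O(\eps^{c})+o_n(1)$ in $L^2$. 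The coarse-grained object is a degree-$k$ multilinear polynomial in the independent mean-zero variables $\{\xi^{(n),\eps}_Q\}$, whose variances converge to (block area)$\times$(the Step~1 constant) and which, by the uniform integrability of $(\zeta^{(n)})^2$, satisfy a Lindeberg condition and so are asymptotically Gaussian. An invariance principle for multilinear polynomial chaos — a Lindeberg telescoping in the spirit of Mossel--O'Donnell--Oleszkiewicz and of~\cite{AKQ12} — then shows that the joint law of the coarse-grained polynomials of degrees $\le K$ converges to that of the same polynomials driven by i.i.d.\ standard Gaussians, i.e.\ to multiple Wiener integrals over the coarse-grained simplices; letting $\eps\downarrow0$ after $n\to\infty$ these converge in $L^2$ to the integrals in~\eqref{Wiener}.

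\textbf{Main obstacle.} The hard part is the universality step in Step~2 — replacing honest white noise by the array $\zeta^{(n)}$. Two points need care. First, near-diagonal chains, in which some $i_j-i_{j-1}$ is small, have no continuum counterpart and must be shown to carry asymptotically no $L^2$-mass; this is where the bound $\sum_x p(i,x)^2=O(i^{-1/2})$ together with the strict ordering $i_1<\cdots<i_k$ is essential. Second, since only uniform integrability of $(\zeta^{(n)})^2$ is available — not a uniform $(2+\delta)$-moment — the Lindeberg swapping must be performed after truncating each $\zeta^{(n)}_{i,x}$ at a level $r_n\to\infty$ growing slowly enough that the truncated, recentred variables keep variance $1+o(1)$ while the discarded part is $o(1)$ in $L^2$; this is exactly what the hypotheses are designed to permit. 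Finally, I would keep careful track of the periodicity factor $\sqrt2$ through the local-CLT asymptotics, since it is what produces the effective temperature $\sqrt2\,\gb$ in the limit.
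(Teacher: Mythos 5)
Your proposal is correct and follows the same route as the paper: the paper offers no proof of this theorem at all, but simply invokes Theorem~4.5 of \cite{AKQ12} and Theorem~3.8 of \cite{CSZ15}, which is exactly the reduction you make in your opening line. Your two-step sketch — the uniform $L^2$ truncation of the chaos series via the bound $\bE[(Q^{(n)}_k)^2]\le C^k/\lfloor k/2\rfloor!$, followed by a coarse-grained Lindeberg swap at each fixed order, with the lack of a uniform $(2+\delta)$-moment handled by a slowly growing truncation of the $\zeta^{(n)}_{i,x}$ — is a faithful outline of the argument in those references (including the periodicity factor $\sqrt2$ and the near-diagonal estimate $\sum_x p(i,x)^2=O(i^{-1/2})$), so nothing further is needed.
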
	
\begin{prop}\label{trun-conv}
Assume that the weights satisfy  $\E[\go]=0,\E[\go^{2}]=1$ and $\pr(\go>x)=x^{-\ga}L(x)$ for some $\ga\ge6$ and some slowly varying function $L(x)$. Let $\gb_{n}$ be a sequence of real numbers with $\gb_{n}n^{1/4}\to \gb>0$ as $n\to\infty$ and $\tgo=\go\ind_{\go\le k_n}$ with $k_n=\gb_n^{-1}$, if $\ga>6$ and $k_n=\gb_n^{-1}m(n^{3/2}(\log n)^\eta)/m(n^{3/2})$ with $1/2<\eta<\ga$, if $\ga=6$. Then 
		\[
			\log Z_{n,\gb_n}^{\tgo} - n\log\E\Big(e^{-\gb_n\go_-} + \sum_{i=1}^{4}\frac{\gb_n^i}{i!}\go_+^i \Big)  \xrightarrow[n\to\infty]{(d)} \log \cZ_{\sqrt{2}\gb}.
		\]
\end{prop}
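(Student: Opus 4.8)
The plan is to reduce the statement, via a multilinear (polynomial-chaos) expansion, to the invariance principle of Theorem~\ref{AKQ-CSZ}. Write $\gl_n:=\log\E\big[e^{\gb_n\tgo}\big]$ for the log-moment generating function of the truncated weight. First I would show it suffices to prove
\[
\log Z_{n,\gb_n}^{\tgo}-n\gl_n\ \xrightarrow[n\to\infty]{(d)}\ \log\cZ_{\sqrt2\gb}.
\]
Indeed, pick $\theta\in(4,5)$; then $\E[\go_+^\theta]<\infty$ (since $\theta<\ga$), $k_n\gb_n$ is bounded away from $0$, and $\gb_n^{\ga-\theta}e^{\gb_nk_n}\to 0$: for $\ga>6$ because $\gb_nk_n=1$; for $\ga=6$ because, by Karamata's theorem (cf.\ the proof of Proposition~\ref{prop:bound}), $\gb_nk_n=m(n^{3/2}(\log n)^{\gh})/m(n^{3/2})=o(\log n)$ — the constraint $\gh<\ga$ being essential here — so $e^{\gb_nk_n}=n^{o(1)}$ while $\gb_n^{\ga-\theta}\le C\,n^{-(\ga-\theta)/4}$. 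Thus Lemma~\ref{lem:comp}(ii) with $p=\lfloor\theta\rfloor=4$ yields
\[
\Big|\gl_n-\log\Big(\E\big[e^{-\gb_n\go_-}\big]+\textstyle\sum_{i=1}^{4}\tfrac{\gb_n^i}{i!}\E[\go_+^i]\Big)\Big|=o(\gb_n^\theta),
\]
so $n$ times the left side is $o(n^{1-\theta/4})\to 0$, and the reduction follows by Slutsky's lemma.

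For the reduced statement, set $\gz^{(n)}_{i,x}:=e^{\gb_n\tgo_{i,x}-\gl_n}-1$ and $\tilde\gz^{(n)}_{i,x}:=\gb_n^{-1}\gz^{(n)}_{i,x}$, an i.i.d.\ family indexed by $(i,x)\in\bN\times\bZ$. Expanding $\la\prod_{i=1}^n(1+\gz^{(n)}_{i,s_i})\ra$ over the vertices of the path and using $\la\prod_{j=1}^k\gz^{(n)}_{i_j,s_{i_j}}\ra=\sum_{x_1,\dots,x_k}\prod_{j=1}^k p_n(i_j-i_{j-1},x_j-x_{j-1})\,\gz^{(n)}_{i_j,x_j}$, with $p_n(i,x)=\vP_n(s_i=x)$ and $i_0=x_0=0$, one gets
\[
e^{-n\gl_n}Z_{n,\gb_n}^{\tgo}=1+\sum_{k=1}^n\gb_n^k\sum_{\substack{1\le i_1<\cdots<i_k\le n\\ x_1,\dots,x_k\in\bZ}}\prod_{j=1}^k p_n(i_j-i_{j-1},x_j-x_{j-1})\,\tilde\gz^{(n)}_{i_j,x_j},
\]
which is exactly the multilinear series of Theorem~\ref{AKQ-CSZ}. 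It remains to verify its three hypotheses for $\tilde\gz^{(n)}$.

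\emph{Mean zero:} $\E[\tilde\gz^{(n)}]=\gb_n^{-1}\big(e^{-\gl_n}\E[e^{\gb_n\tgo}]-1\big)=0$. \emph{Variance $1+o(1)$:} with $\gl_n(c\gb_n):=\log\E[e^{c\gb_n\tgo}]$ one has $\var(\tilde\gz^{(n)})=\gb_n^{-2}\big(e^{\gl_n(2\gb_n)-2\gl_n}-1\big)$; splitting $\tgo$ by the sign of $\go$, applying Lemma~\ref{lem:comp}(i) on $\{0<\go\le k_n\}$ and a second-order Taylor estimate (needing only two moments of $\go_-$) on $\{\go\le 0\}$, and using $\E[\go]=0$, $\E[\go^2]=1$ to annihilate the first-order term and normalize the second, one finds $\gl_n(c\gb_n)=\tfrac12 c^2\gb_n^2(1+o(1))$ for $c\in\{1,2\}$, whence $\gl_n(2\gb_n)-2\gl_n=\gb_n^2(1+o(1))$ and $\var(\tilde\gz^{(n)})=1+o(1)$. \emph{Uniform integrability of $\{(\tilde\gz^{(n)})^2\}$:} on $\{\go\le 0\}$ a direct bound gives $|\tilde\gz^{(n)}|\le\go_-+o(1)$, and on $\{0<\go\le\gb_n^{-1}\}$, using $e^{\gb_n\go_+}-1\le(e-1)\gb_n\go_+$, one gets $|\tilde\gz^{(n)}|\le C(\go_++1)$; so on $\{\go\le\gb_n^{-1}\}$ the quantity $(\tilde\gz^{(n)})^2$ is dominated by the fixed integrable variable $C(\go_+^2+\go_-^2+1)$. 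When $\ga>6$ (so $k_n=\gb_n^{-1}$) this exhausts the range; when $\ga=6$ one must also handle $\{\gb_n^{-1}<\go\le k_n\}$, where $(\tilde\gz^{(n)})^2\le C\gb_n^{-2}e^{2\gb_n\go}$ and a direct computation with $\bar F(x)=x^{-\ga}L(x)$ (substitute $u=\gb_nx$) yields $\E\big[(\tilde\gz^{(n)})^2\ind_{\gb_n^{-1}<\go\le k_n}\big]\le C\gb_n^4 e^{2\gb_nk_n}(\log n)^{O(1)}\to 0$, since $\gb_n^4$ is of order $n^{-1}$ and $2\gb_nk_n=o(\log n)$ (again using $\gh<\ga$). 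Decomposing $(\tilde\gz^{(n)})^2$ into an integrable-dominated part and a part with expectation tending to $0$ then gives uniform integrability.

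By Theorem~\ref{AKQ-CSZ}, $e^{-n\gl_n}Z_{n,\gb_n}^{\tgo}\to\cZ_{\sqrt2\gb}$ in distribution (and in $L^2$). Since $\cZ_{\sqrt2\gb}>0$ almost surely (positivity of the solution of the stochastic heat equation with flat initial datum) and $\log$ is continuous at this limit, the continuous mapping theorem gives $\log Z_{n,\gb_n}^{\tgo}-n\gl_n\to\log\cZ_{\sqrt2\gb}$; combined with the first paragraph this proves the Proposition. I expect the uniform-integrability step in the borderline case $\ga=6$ to be the main obstacle: there $\gb_nk_n\to\infty$, so the truncated weights are no longer bounded by $\gb_n^{-1}$ and their squared fluctuations are not dominated by a single integrable variable, and it is precisely the logarithmic correction in the definition of $k_n$ together with $\gh<\ga$ that makes the residual mass vanish. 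Getting the variance to be exactly $1+o(1)$ likewise hinges on the cancellations afforded by $\E[\go]=0$, $\E[\go^2]=1$ combined with Lemma~\ref{lem:comp}.
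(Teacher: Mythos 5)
Your proof is correct and follows essentially the same route as the paper: expand $e^{-n\gl_n}Z_{n,\gb_n}^{\tgo}$ as a multilinear polynomial in $\tilde\gz^{(n)}$, verify the mean/variance/uniform-integrability hypotheses of Theorem~\ref{AKQ-CSZ} (using Lemma~\ref{lem:comp} and the relation \eqref{kara} for the borderline case $\ga=6$), and then absorb the difference between $n\gl_n$ and the stated centering via Lemma~\ref{lem:comp}(ii). The only cosmetic difference is that you establish uniform integrability of $(\tilde\gz^{(n)})^2$ by direct domination plus a vanishing tail on $\{\gb_n^{-1}<\go\le k_n\}$, whereas the paper bounds $\Vert\tilde\gz^{(n)}\Vert_p$ for some $p\in(2,\ga)$ via H\"older; both are valid.
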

\begin{proof}
Let the truncated log-moment generating function be
$$
\gl_n(x):=\log \E[e^{x\tilde\go}].
$$
We will rewrite $Z_{n,\gb_n}^\tgo \,e^{-n\gl_n(\gb_n)}$ in the form of a multilinear polynomial
\begin{align}\label{multi-exp}
 e^{-n\gl_n(\gb_n)}\, Z_{n,\gb_n}^\tgo &= \Big\la \exp\Big(\sum_{i=1}^n ( \gb_n\go_{i,s_i} -\gl_n(\gb_n) \,)\Big)\Big\ra \notag\\
 &= 1+\sum_{k=1}^n\,\sum_{\substack{1\leq i_1<\cdots<i_k  \leq n \\ x_1,...,x_k \in \bZ}} \beta_n^k \prod_{j=1}^k p(i_j-i_{j-1},x_j-x_{j-1}) \,\, \zeta^{(n)}_{i_j,x_j},
\end{align}
where 
\begin{equation}\label{multi-var}
	\zeta_v^{(n)}:=\gb_n^{-1} (e^{\beta_n\tilde\go_v-\lambda_n(\gb_n)}-1) \text{ for } v\in\dZ^2
\end{equation}
It is immediate that $\E[\zeta_v^{(n)}]=0$ and using  Lemma~\ref{lem:comp} that 
\[
	\E\big[(\zeta_v^{(n)})^2\big] = \gb_n^{-2} e^{- 2\gl_n(\gb_n)}(e^{\gl_n(2\gb_n)}-e^{2\gl_n(\gb_n)})=1+O(\gb_{n}).
\]
(notice that the condition $e^{\gb_nk_n}\gb_n^{\ga-\theta}\to 0$ is satisfied, since $\gb_nk_n\le m(n^{3/2}(\log n)^\eta)/m(n^{3/2})\le (\log n)^{\eta/(1-\vartheta)\ga}<<\log n$, for $\eta<\ga$ and $\theta$ small).
Moreover, $\zeta^{(n)}$ have uniformly integrable second moments as the following computation, for $2<p<p'<\ga$, shows: Denote $\Vert\go\Vert_p:=(\E|\go|^p)^{1/p}$,
\begin{align*}
\Vert\zeta^{(n)}\Vert_p
& = \beta_n^{-1} \Vert e^{\gb_n\tgo-\gl_n(\beta_n)}-1\Vert_p \\
& \le  \beta_n^{-1} e^{-\gl_n(\beta_n)} \Vert e^{\beta_n\tgo}-1\Vert_p
                         + \beta_n^{-1} |1-e^{-\gl_n(\beta_n)}| \\
& \le e^{-\gl_n(\beta_n)} \Vert\tgo e^{\gb_n\tgo_+}\Vert_p
                         + \beta_n^{-1} |1-e^{-\gl_n(\beta_n)}| \\ 
&   \le  e^{-\gl_n(\beta_n)}\cdot \Vert \go\Vert_{p'}\cdot \Vert e^{\gb_n\tgo_+}\Vert_{pp'/(p'-p)}
+ \beta_n^{-1} |1-e^{-\gl_n(\beta_n)}|,
\end{align*}
where in the third inequality we used the fact that $|1-e^{x}|\le |x|\max\{1,e^x\}$, for all $x\in\dR$.  Lemma~\ref{lem:comp} shows that the last term in right hand side is uniformly bounded. Since $p'<\ga$, we
 also have that  $ \Vert \go\Vert_{p'} <\infty$ and it remains to check the boundedness of $ \Vert e^{\gb_n\tgo_+}\Vert_{pp'/(p'-p)}$. This follows immediately in the case $\ga>6$ since the truncation level
 equals $k_n=\gb_n^{-1}$ and so $\gb_n\tgo_+\le 1$. In the case $\ga=6$ the truncation is $k_n=\gb_n^{-1}m(n^{3/2}(\log n)^\eta)/m(n^{3/2})$. Denoting, for conciseness, $q=pp'/(p'-p)$ we have
 \begin{align*}
 \Vert e^{\gb_n\tgo}\Vert_q^q &=\E[e^{q\gb_n \tgo_+};\go\le\gb_n^{-1}] +\E[e^{q\gb_n \tgo_+};\go\ge\gb_n^{-1}] 
 \le e^q+ e^{q\gb_nk_n} \bar{F}(\gb_n^{-1})\\
 &\le e^q+\exp\Big( qm(n^{3/2}(\log n)^\eta)/m(n^{3/2}) \Big)\,\, \gb_n^{\ga} L(\gb_n^{-1}) 
 \end{align*}
Using relation \eqref{kara} and the assumption that $\gb_nn^{1/4}\to\gb>0$, we estimate the above, for all large enough $n$, by
\[
e^q+\exp\Big( q(\log n)^{\frac{\eta}{(1-\vartheta)a}} -\text{const.} \log n\Big),
\]
which is uniformly bounded since $\eta<\ga$ and $\vartheta$ can be chosen arbitrarily small.
Thus, the assumptions of Theorem \ref{AKQ-CSZ} are satisfied and we have that
\[
	 e^{-n\lambda_n(\gb_n)} \,Z_{n,\gb_n}^{\tgo}  \xrightarrow[n\to\infty]{(d)} \cZ_{\sqrt{2}\gb}\
\]
when $\gb_{n}n^{1/4}\to\gb>0$. Finally, by Lemma \ref{lem:comp} we check that, for $n\to\infty$ we have\\
\hspace*{35mm}$\displaystyle
	\Big|n\lambda_n(\gb_n) - n\log\E\Big(e^{-\gb_n\go_-} + \sum_{i=1}^{4}\frac{\gb_n^i}{i!}\go_+^i \Big) \Big| = n\, o(\gb_n^{4}) = o(1).
$
\end{proof}
We can now conclude the proof of Theorem~\ref{thm:14}.

\begin{proof}[Proof of Theorem~\ref{thm:14}]
Proposition \ref{prop:bound} states that 
$$
		n^{a}\big(\log Z_{n,\gb_n}^\go  - \log  Z_{n,\gb_n}^{\tgo}\big)\xrightarrow[n\to\infty]{\bP} 0,
$$
for any $a\ge 0$, under the assumption that
\begin{equation}\label{posi-est2}
\lim_{\eps\,\downarrow \,0}\limsup_{n\to\infty}\pr\left(Z_{n,\gb_n}^{\tgo}< \eps \E[Z_{n,\gb_n}^{\tgo}]\right) = 0.
\end{equation}
However, $Z_{n,\gb_n}^{\tgo} / \E[Z_{n,\gb_n}^{\tgo}]= e^{-\gl_n(\gb_n) }Z_{n,\gb_n}^{\tgo}$ converges in distribution to $\cZ_{\sqrt{2}\gb}$, when $\gb_nn^{1/4}\to\gb>0$, which is a.s. strictly positive \cite{M91} and so the assumption is readily checked.
\vskip 2mm
When $\gb_nn^{1/4}\to 0$ we can see from \eqref{multi-exp} that 
$$
e^{-\gl_n(\gb_n) }Z_{n,\gb_n}^{\tgo} = 1 +(\gb_nn^{1/4})\,\,\frac{1}{n^{3/4}} \sum_{1\le i\le n, x\in \bZ} (\sqrt{n}p(i,x)) \,\, \zeta^{(n)}_{i,x} +o(\gb_nn^{1/4}),
$$
where the $o(\gb_nn^{1/4})$ is in an $L^2(\bP)$ sense. Thus, clearly, assumption \eqref{posi-est2} is satisfied. Moreover,
$$
\frac{1}{n^{3/4}} \sum_{1\le i\le n, x\in \bZ} \zeta^{(n)}_{i,x}\,(\sqrt{n}p(i,x)) \xrightarrow[n\to\infty]{(d)} \sqrt{2}\int_{(0,1)\times \bR} \rho(t,x) W(\dd t\dd x),
$$
which is a mean zero Gaussian with variance $2\pi^{-1/2}$.
\end{proof}

\section{Proof of Theorem \ref{thm:gauss}}		
 \begin{prop}\label{truncated-gauss}
	Assume that the weights satisfy  $\E[\go]=0,\E[\go^{2}]=1$ and $\pr(\go>x)=x^{-\ga}L(x)$ for some $\ga\in (2,6]$ and some slowly varying function $L(x)$. Let $\gb_{n}$ be a sequence of real numbers such that $\gb_{n} m(n^{3/2}) $ stays bounded, as $n\to\infty$, but $\gb_nn^{1/4}$ converges to zero. Let $\tgo=\go \ind_{\go\leq k_n}$, with 
	$k_n=\gb_n^{-1}\,m(n^{3/2}(\log n)^{\eta})/ m(n^{3/2}) $ and $\eta\in(1/2,\ga)$. Then 
	\[
		\frac{1}{\gb_n n^{1/4}}\Biggl(\log Z_{n,\gb_n}^{\tgo} - n\log\E\Big(e^{-\gb_n\go_-} + \sum_{i=1}^2 \frac{\gb_n^i}{i!}\go_+^i + \frac{\gb_n^3}{3!}\go_+^3\ind_{\ga>3}\Big) \Biggr) \xrightarrow[n\to\infty]{(d)} \cN(0,2\pi^{-1/2}) 
			\]
\end{prop}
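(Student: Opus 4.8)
\noindent\emph{Proof strategy.} The plan is to mimic the proof of Proposition~\ref{trun-conv}, but to read off the \emph{Gaussian} fluctuations living on the finer scale $\gb_n n^{1/4}$, which here tends to $0$: in this regime Theorem~\ref{AKQ-CSZ} gives only the degenerate limit $e^{-n\gl_n(\gb_n)}Z^{\tgo}_{n,\gb_n}\probc1$, so a direct central limit theorem for the first chaos is needed. First I would start from the multilinear expansion \eqref{multi-exp}--\eqref{multi-var}, with $\gl_n(x):=\log\E[e^{x\tgo}]$ and $\zeta^{(n)}_v:=\gb_n^{-1}(e^{\gb_n\tgo_v-\gl_n(\gb_n)}-1)$. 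Lemma~\ref{lem:comp}, applied with an exponent $\gth$ just below $\min\{\ga,4\}$ (its hypothesis $\gb_n^{\ga-\gth}e^{\gb_n k_n}\to0$ being met because, by \eqref{kara}, $e^{\gb_n k_n}$ is at most a power of $\log n$ while $\gb_n$ decays polynomially), yields $\E[\zeta^{(n)}]=0$ and $\var(\zeta^{(n)})=1+o(1)$. The $k$-th term of the expansion has $L^2(\bP)$-norm at most $(k!)^{-1/2}\big(\gb_n^2(1+o(1))\sum_{1\le i\le n,\,x}p(i,x)^2\big)^{k/2}$, and since $\sum_{i=1}^n\sum_{x\in\bZ}p(i,x)^2=\sum_{i=1}^n\binom{2i}{i}2^{-2i}=(1+o(1))\tfrac{2}{\sqrt\pi}\sqrt n$, this is at most $(k!)^{-1/2}(c\,\gb_n n^{1/4})^k$; as $\gb_n n^{1/4}\to0$, the terms of order $\ge2$ sum in $L^2$ to $O\big((\gb_n n^{1/4})^2\big)$. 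Thus $e^{-n\gl_n(\gb_n)}Z^{\tgo}_{n,\gb_n}=1+L_n+r_n$ with $L_n:=\gb_n\sum_{1\le i\le n,\,x\in\bZ}p(i,x)\,\zeta^{(n)}_{i,x}$, $\|L_n\|_2=O(\gb_n n^{1/4})$, $\|r_n\|_2=o(\gb_n n^{1/4})$ and $L_n+r_n\probc0$; using $|\log(1+u)-u|\le u^2$ for $|u|\le\tfrac12$ this gives $\log\big(e^{-n\gl_n(\gb_n)}Z^{\tgo}_{n,\gb_n}\big)=L_n+o_\pr(\gb_n n^{1/4})$. After dividing by $\gb_n n^{1/4}$, the proposition reduces to: (i) $X_n:=n^{-1/4}\sum_{1\le i\le n,\,x\in\bZ}p(i,x)\,\zeta^{(n)}_{i,x}\xrightarrow{(d)}\cN(0,2\pi^{-1/2})$, and (ii) replacing the centering $n\gl_n(\gb_n)$ by the one in the statement costs only $o(\gb_n n^{1/4})$.

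For (i), the limiting variance falls out at once: $\var(X_n)=\tfrac{1+o(1)}{\sqrt n}\sum_{i=1}^n\binom{2i}{i}2^{-2i}\To2\pi^{-1/2}$. The step I expect to be the real obstacle is the Lindeberg condition for the triangular array $\{p(i,x)\zeta^{(n)}_{i,x}:1\le i\le n,\ x\in\bZ\}$: it is \emph{not} automatic, since the cutoff $k_n=\gb_n^{-1}m(n^{3/2}(\log n)^\eta)/m(n^{3/2})$ exceeds $\gb_n^{-1}$, so the $\zeta^{(n)}$ are unbounded on the $\gb_n^{-1}$ scale and, for $\ga$ near $2$, have barely more than two moments. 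The plan is to establish a \emph{uniform} tail bound: for each small $\delta>0$ there is $C_\delta<\infty$ with $\sup_{n}\E\big[(\zeta^{(n)})^2\ind_{|\zeta^{(n)}|>M}\big]\le C_\delta M^{-(\ga-2)+\delta}$ for $M\ge1$, proved for the positive part from $\pr(\zeta^{(n)}>t)\le\pr(\go_+>ct)$ when $1\le t\ll\gb_n^{-1}$ together with $\bar F(x)=x^{-\ga}L(x)$ (truncation only lightens the tail), and for the negative part from $|\zeta^{(n)}|\ind_{\{\go\le0\}}\le\go_-+C\gb_n$ with $\E[\go_-^2]<\infty$. Then, with $p(i,x)\le c\,i^{-1/2}$, the event $\{|p(i,x)\zeta^{(n)}|>\eps n^{1/4}\}$ lies inside $\{|\zeta^{(n)}|>c^{-1}\eps\,n^{1/4}\sqrt i\}$, and
\begin{align*}
\frac{1}{\sqrt n}\sum_{i=1}^n\sum_{x\in\bZ}p(i,x)^2\,\E\big[(\zeta^{(n)})^2\ind_{|p(i,x)\zeta^{(n)}|>\eps n^{1/4}}\big]\ \le\ \frac{C}{\sqrt n}\sum_{i=1}^n\frac{1}{\sqrt i}\,\big(n^{1/4}\sqrt i\big)^{-(\ga-2)+\delta}\ \To\ 0
\end{align*}
for $\delta$ small (the $i$-sum converges when $\ga>3$, and is of order $n^{(3-\ga)/2+\delta/2}$ when $2<\ga\le3$, in which case the prefactor still wins). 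Lindeberg--Feller then gives (i).

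For (ii), I would expand $\log\E[e^{\gb_n\tgo}]$ against the polynomial part using the bound $\big|e^y-1-\sum_{i=1}^{\lfloor\gth\rfloor}y^i/i!\big|\le C\,y^\gth e^y$ for $y\ge0$ (keeping terms up to the quadratic order when $2<\ga\le3$, up to cubic when $3<\ga\le6$, with $\gth$ just below $\min\{\ga,4\}$), and split $\E[\go_+^\gth e^{\gb_n\go_+}\ind_{\go_+\le k_n}]$ at level $\gb_n^{-1}$; this bounds the centering discrepancy by $O(n\gb_n^\gth)+O\big(n\gb_n^{\ga}e^{\gb_n k_n}L(\gb_n^{-1})\big)$. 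Both terms are $o(\gb_n n^{1/4})$: since $\gb_n m(n^{3/2})$ is bounded, $\gb_n\le Cn^{-3/(2\ga)}$ up to slowly varying factors, so $n^{3/4}\gb_n^{\gth-1}$ and $n^{3/4}\gb_n^{\ga-1}$ carry the negative exponent $(6-3\ga)/(4\ga)<0$, and in the second term this genuine negative power of $n$ absorbs the sub-polynomial factor $e^{\gb_n k_n}$ controlled by \eqref{kara}; for $\ga=6$ one invokes in addition $\gb_n n^{1/4}\to0$ and $\E[\go_+^4]<\infty$. Combining (i) and (ii) with the reduction above proves the proposition. The entire difficulty is concentrated in the Lindeberg step: one must balance the near-$x^{-\ga}$ tails of the $\zeta^{(n)}$ against the $i^{-1/2}$ decay of the walk kernel, so that the handful of small-time sites carrying large values of $p(i,x)$ do not destroy the Gaussian limit.
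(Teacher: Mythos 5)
Your proposal is correct and follows essentially the same route as the paper: the same multilinear expansion in the variables $\zeta^{(n)}_v=\gb_n^{-1}(e^{\gb_n\tgo_v-\gl_n(\gb_n)}-1)$, the same $L^2(\bP)$ bound showing the chaos terms of order $k\ge2$ contribute $O\big((\gb_n n^{1/4})^2\big)$, a CLT for the first chaos, and the same use of Lemma~\ref{lem:comp} with $\gth$ close to $\min\{\ga,4\}$ to replace $n\gl_n(\gb_n)$ by the stated centering. The only difference is that you spell out the Lindeberg condition with a quantitative tail bound on $\zeta^{(n)}$, whereas the paper asserts the Gaussian limit of $n^{-1/4}\sum_{i,x}p(i,x)\zeta^{(n)}_{i,x}$ directly, relying implicitly on the uniform integrability of $(\zeta^{(n)})^2$ already established in the proof of Proposition~\ref{trun-conv}; your extra step is a legitimate (indeed slightly over-engineered) way of supplying that detail, since the crude bound $\E\big[(\zeta^{(n)})^2\ind_{|\zeta^{(n)}|>\eps n^{1/4}/c}\big]\to0$ combined with $p(i,x)\le c\,i^{-1/2}$ already suffices.
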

 \begin{proof}
 We denote, again, the truncated log-moment generating function
$$
\gl_n(x):=\log \E[e^{x\tilde\go}],
$$
with $\tgo:=\go\ind_{\go\le k_n}$ and $k_n=\gb_n^{-1}\,m(n^{3/2}(\log n)^{\eta})/ m(n^{3/2}) $.
Write $Z_{n,\gb_n}^\tgo \,e^{-n\gl_n(\gb_n)}$ in the form of a multilinear polynomial as in \eqref{multi-exp}, \eqref{multi-var}. 
Denoting by $\gz^{(n)}_{i,x}:=\gb_n^{-1}(e^{\gb_n\tgo_{i,x}-\gl_n(\gb_n)}-1)$, we have
\begin{align}\label{411}
&\gb_n^{-1}n^{-\frac{1}{4}}\Big(e^{-n\gl_n(\gb_n)}\, Z_{n,\gb_n}^\tgo -1\Big) -n^{-\frac{1}{4}}\sum_{1\le i\le n,\, x\in \bZ} p(i,x) \zeta^{(n)}_{i,x}\notag\\
&\qquad\qquad= \sum_{k=2}^n  
 c_{n,k}\,\,\, n^{-\frac{k}{4}}  \sum_{\substack{1\leq i_1<\cdots< i_k\leq n\\ x_1,...,x_k\in \bZ}} 
 \prod_{j=1}^k p(i_j-i_{j-1},x_j-x_{j-1}) \,\, \zeta^{(n)}_{i_j,x_j},
\end{align}
where 
$c_{n,k}:=(\beta_n n^{1/4})^{k-1} \longrightarrow 0$
since $\gb_nn^{1/4}$ converges to zero and $k\ge 2$. 
The estimates in the proof of Proposition \ref{trun-conv} show that
$$\sum_{k=2}^n  
n^{-k/4}  \sum_{\substack{1\leq i_1<\cdots< i_k\leq n\\ x_1,...,x_k\in \bZ}} 
 \prod_{j=1}^k \,p(i_j-i_{j-1},x_j-x_{j-1}) \,\, \zeta^{(n)}_{i_j,x_j},
$$
is bounded in $L^2(\bP)$. Therefore the right hand side of \eqref{411} converges to zero in $L^2(\bP)$. Moreover,
$$n^{-1/4}\sum_{1\le i\le n,\, x\in \bZ} p(i,x) \zeta^{(n)}_{i,x} \xrightarrow{(d)} \cN(0,2\pi^{-1/2}).
$$ 
Noticing that $(\gb_nn^{1/4})^{-1}\log (e^{-n\gl_n(\gb_n)}\, Z_{n,\gb_n}^\tgo\,)\approx (\gb_nn^{1/4})^{-1}(e^{-n\gl_n(\gb_n)}\, Z_{n,\gb_n}^\tgo -1\,) $, the result follows once we 
 check the asymptotic behavior of the centering $n\gl_n(\gb_n)$. To this end, we invoke, again, Lemma \ref{lem:comp} and get, for any $\theta<\ga$,
\begin{align*}
(\gb_n n^{1/4})^{-1}\,\,\Big| n\gl_n(\gb_n) -n\log\E\Big(e^{-\gb_n\go_-} + \sum_{i=1}^{\lfloor \theta \rfloor} \frac{\gb_n^i}{i!}\go_+^i \Big)\,\Big|
 &= (\gb_n n^{1/4})^{-1} \,n\,o(\gb_n^\theta) \\
&= o(n^{3/4}\gb_n^{\theta-1})
=o(n^{\frac{6-6\theta+3\ga}{4\ga} })=o(1),
\end{align*}
since $\ga>2$ and $\theta$ can be chosen to be arbitrarily close to $\ga$ (Notice that in the previous display we ignored slowly varying corrections, since these are immaterial). Finally, if $\ga>4$ and hence we choose $\theta>4$, then
\begin{align*}
&(\gb_n n^{1/4})^{-1} \,n\,\Big| \log\E\Big(e^{-\gb_n\go_-} + \sum_{i=1}^{\lfloor \theta \rfloor} \frac{\gb_n^i}{i!}\go_+^i \Big) - \log\E\Big(e^{-\gb_n\go_-} + \sum_{i=1}^2 \frac{\gb_n^i}{i!}\go_+^i + \frac{\gb_n^3}{3!}\go_+^3\ind_{\ga>3}\Big)\,\Big| \\
&= (\gb_n n^{1/4})^{-1} \,n \,O(\gb_n^4) = O\big((\gb_nn^{1/4})^3\big) =o(1),
\end{align*}
whenever $\gb_nn^{1/4}$ converges to zero. If $\ga\leq 4$, then $\theta$ is chosen strictly less that $4$ and the above difference is trivially equal to zero. 
 \end{proof}
 
 \begin{proof}[Proof of Theorem \ref{thm:gauss}]
 The proof follows immediately from Propositions \ref{truncated-gauss} and \ref{prop:bound} once we check that assumption \eqref{posi-est} in Proposition \ref{prop:bound} is satisfied.
 But this is clear from relation \eqref{411}, which implies that $e^{-n\gl_n(\gb_n)}\, Z_{n,\gb_n}^\tgo=1+O(\gb_n n^{1/4})=1+o(1)$.
 \end{proof}
\section{Proof of theorem \ref{thm:heavy}}\label{sec:le2}
Note that we already have, by Proposition \ref{prop:bound}, that
\[
n^{1/2}(\log Z_{n,\gb_{n}}^{\go} - \log Z_{n,\gb_{n}}^{\tgo} ) \xrightarrow[n\to\infty]{\bP}0,
\]
where $\tgo=\go\ind_{\go\le k_{n}}$ with $k_{n}=\gb_{n}^{-1}m(n^{3/2}(\log n)^{\eta})/m(n^{3/2})$ for some $\eta\in(1/2,\ga)$. So, as in the previous cases, it suffices to determine the limit of
the partition function with truncated disorder, $Z_{n,\gb_{n}}^{\tgo}$. This will be the main effort in the proof. 
Below, we will restrict attention to the case $\gb_n m(n^{3/2})\to \gb>0$. The case $\gb=0$ follows by plain inspection of the bounds in the proof. This is also reflected in the fact that
$\cW_{\gb}^{(\ga)}  \xrightarrow[n\to\infty]{(d)}\cW_{0}^{(\ga)} $, since for $\beta$ tending to zero
\begin{align*}
\frac{1}{\gb} \int_{\sS} (e^{\gb w}-1-\gb w)\rho(t,x)\cP(\dd w \dd t \dd x) \sim \gb \int_{\sS} w^2 \rho(t,x)\cP(\dd w \dd t \dd x),
\end{align*}
which tends to zero, since the last integral is $\cP$-a.s.~finite, thanks to Lemma \ref{PoissonInt},

\begin{proof}[Proof of Theorem \ref{thm:heavy}]
As explained, we assume $\gb_n m(n^{3/2})\to \gb>0$.
Performing the usual multilinear expansion we have
\begin{align*}
e^{-n\gl_n(\gb_n)} Z_{n,\gb_{n}}^{\tgo} = 1 + \sum_{i,x}(e^{\gb_n\tgo_{i,x}-\gl_n(\gb_n)}-1) p_n(i,x) + R_n,
\end{align*}
where the remainder $R_n$ equals
\begin{align*}
R_n:= \sum_{k=2}^\infty\, \sum_{\substack{1\leq i_1<\cdots< i_k\leq n\\ x_1,...,x_k\in \bZ}} \, \prod_{j=1}^k\, (e^{\gb_n\tgo_{i_j,x_j}-\gl_n(\gb_n)}-1) \,p_n(i_j-i_{j-1},x_j-x_{j-1}). 
\end{align*}
We first show that $n\bE[R_n^2]=o(1)$, as $n$ tends to infinity. Using the fact that $e^{\gb_n\tgo_{v}-\gl_n(\gb_n)}-1$ are mean zero we have
\begin{align}\label{L2Rn}
n\bE[R_n^2] =n \sum_{k=2}^\infty\, \sum_{\substack{1\leq i_1<\cdots< i_k\leq n\\ x_1,...,x_k\in \bZ}} \, (e^{\gl_n(2\gb_n) -2\gl_n(\gb_n)}-1)^k \,\prod_{j=1}^k p_n^2(i_j-i_{j-1},x_j-x_{j-1}).
\end{align}
From the second part of Lemma \ref{lem:comp} we have that
\begin{align*}
e^{\gl_n(2\gb_n) -2\gl_n(\gb_n)}-1& \le \text{const.} \,e^{\gb_nk_n} \bar{F}(k_n) \leq \text{const.} \,e^{ m(n^{3/2}(\log n)^\eta)/ m(n^{3/2}) } \,\bar{F}(m(n^{3/2}(\log n)^\eta))\\
&\le \text{const.} \,e^{(\log n)^{\eta/(1-\vartheta)\ga}} n^{-3/2} (\log n)^{-\eta} \le \text{const.} \,e^{(\log n)^{\eta/(1-\vartheta)\ga}} n^{-3/2},
\end{align*}
for all large $n$ and $\vartheta$ arbitrarily small, as in \eqref{kara}.
Inserting this into \eqref{L2Rn} we obtain
\begin{align*}
n\bE[R_n^2] &\le n \sum_{k=2}^\infty  (\text{const.}\,e^{(\log n)^{\eta/(1-\vartheta)\ga}} n^{-3/2})^k \,\sum_{\substack{1\leq i_1<\cdots< i_k\leq n\\ x_1,...,x_k\in \bZ}}   \,\prod_{i=1}^kp_n(i_j-i_{j-1},x_j,x_{j-1})^2\\
&=n \sum_{k=2}^\infty  (\text{const.}\,e^{(\log n)^{\eta/(1-\vartheta)\ga}} n^{-3/2})^k n^{-k} \,\sum_{\substack{1\leq i_1<\cdots< i_k\leq n\\ x_1,...,x_k\in \bZ}}\,   \,\prod_{i=1}^k (\sqrt{n}p_n(i_j-i_{j-1},x_j,x_{j-1}))^2.
\end{align*}
Since the summation runs over $k\ge 2, \eta<\ga$ and $n^{-3k/2} \sum_{\substack{1\leq i_1<\cdots< i_k\leq n\\ x_1,...,x_k\in \bZ}}   \,\prod_{i=1}^k (\sqrt{n}p_n(i_j-i_{j-1},x_j-x_{j-1}))^2  $ converges to a finite Riemann integral, it is easily seen that 
$n\bE[R_n^2]$ converges to zero, as $n$ tends to infinity. 

To proceed further, we write
\begin{align*}
 Z_{n,\gb_{n}}^{\tgo} &= e^{n\gl_n(\gb_n)}\bigg(1 + \sum_{1\leq i\leq n , x\in\mathbb{Z}}(e^{\gb_n\tgo_{i,x}-\gl_n(\gb_n)}-1) p_n(i,x) + R_n\bigg)\\
 &= e^{(n-1)\gl_n(\gb_n)}\bigg( e^{\gl_n(\gb_n)} + \sum_{1\leq i\leq n ,x\in \mathbb{Z}}(e^{\gb_n\tgo_{i,x}}-e^{\gl_n(\gb_n)}) p_n(i,x) + e^{\gl_n(\gb_n)}R_n\bigg)\\
 &= e^{(n-1)\gl_n(\gb_n)}\bigg( 1+ \sum_{1\leq i\leq n , x\in\mathbb{Z}}(e^{\gb_n\tgo_{i,x}}-1) p_n(i,x) - (n-1)\big(e^{\gl_n(\gb_n)}-1\big) + e^{\gl_n(\gb_n)}R_n\bigg)\\
\end{align*}
As it will be checked, the terms inside the parenthesis, besides $1$, tend to zero, in probability,  as $n$ tends to infinity and in fact the third and fourth terms are $o(n^{-1/2})$, while the second term will be of order $O(n^{-1/2})$. Hence,
\begin{align*}
\sqrt{n}\log &Z_{n,\gb_{n}}^{\tgo} \\
&=\sqrt{n} (n-1) \gl_n(\gb_n) + \sqrt{n} \log \Big( 1+ \sum_v(e^{\gb_n\tgo_v}-1) p_n(v) - (n-1)\big(e^{\gl_n(\gb_n)}-1\big) + e^{\gl_n(\gb_n)}R_n\Big)\\
  &=\sqrt{n} \sum_v(e^{\gb_n\tgo_v}-1) p_n(v) - \sqrt{n} (n-1) \Big( e^{\gl_n(\gb_n)}-1-\gl_n(\gb_n)  \Big) + \sqrt{n} e^{\gl_n(\gb_n)}R_n +o(1)\\
  & =\sqrt{n} \sum_v(e^{\gb_n\tgo_v}-1) p_n(v) - \sqrt{n} (n-1) \Big( e^{\gl_n(\gb_n)}-1-\gl_n(\gb_n)  \Big) +o(1).
 \end{align*}
 Using the second part of Lemma \ref{lem:comp}, we have the estimate
 \begin{align*}
 \sqrt{n} (n-1) \Big| e^{\gl_n(\gb_n)}-1-\gl_n(\gb_n)  \Big|  
 &\le \text{const.} \,n^{3/2} (\gl_n(\gb_n))^2 \\
 &=\text{const.}\,n^{3/2} \big( \log(1 + \bE[e^{\gb_n\go\ind_{\go\le k_n}}]-1) \,\big)^2\\
 &\leq\text{const.}\,n^{3/2} \big(\bE[e^{\gb_n\go\ind_{\go\le k_n}}]-1) \,\big)^2\\
 &\le \text{const.}\,n^{3/2} e^{2\gb_n k_n} (\bar{F}(\gb_n^{-1}))^2\\
 &\le \text{const.}\,n^{3/2} e^{2\gb_n k_n} \big(\bar{F}(m(n^{3/2}))\big)^2\\
 &= \text{const.}\,n^{-3/2} \,e^{2\gb_n k_n} ,
 \end{align*}
 which converges to zero as $n$ tends to infinity, {since $\gb_n k_n=m(n^{3/2}(\log n)^\eta)/m(n^{3/2})$ and $\eta\in(1/2,\ga)$}. Therefore, to identify the distributional limit of $\sqrt{n}\log Z_{n,\gb_{n}}^{\tgo} $ it remains to do so for $\sqrt{n} \sum_{v}(e^{\gb_n\tgo_v}-1) p_n(v) $.
 For this, we first notice that the field $\big\{(n^{-1}i,n^{-1/2}x,m(n^{3/2})^{-1}\go_{(i,x)}) \colon i+x\,\,\text{is even}\,\,,|x|\le K\sqrt{n}\big\}$ 
 converges to a Poisson field $\cP$ on $\bR\times(0,1)\times (-K,K)$ with intensity measure $\frac{1}{2}\ga |w|^{-(1+\ga)} (\ind_{w> 0} +c_-\ind_{w<0})\dd w\dd t\dd x$. 
To see this, denote for any set $A\subset (0,1)\times \bR$ and (without loss of generality) any positive $r$
\begin{equation}\label{NA_Poi}
N_{A}:=\sharp\big\{(i,x)\in\bN\times \bZ  \colon\,i+x\,\,\text{is even},\,\, (n^{-1}i,n^{-1/2}x)\in A \,\,\,\text{and}\,\, \,m(n^{3/2})^{-1}\go_{(i,x)}>r  \big\}.
\end{equation}
For every single $(i,x)\in\bN\times \bZ$ it holds $\bP(m(n^{3/2})^{-1}\go_{(i,x)}>r)\to0,$ as $n\to\infty$, while
\begin{align*}
\bE[ N_A ] 
&= \frac{1}{2}n^{3/2}(1+o(1))\,|A| \bar{F}(rm(n^{3/2}))
=  \frac{1}{2} n^{3/2}(1+o(1))\,\,|A| \,\,r^{-\ga} m(n^{3/2})^{-\ga} L(rm(n^{3/2}))\\
&=\,  \frac{1}{2} n^{3/2}(1+o(1))\,|A| \,r^{-\ga}  \bar{F}(m(n^{3/2}))=(1+o(1))\,|A| \,r^{-\ga}  
=\, \frac{1}{2}(1+o(1)) \int_A\, \int_{r}^\infty \frac{\ga\dd w}{w^{1+\ga}}\dd t \dd x,
\end{align*}
where the factor $1/2$ comes from the parity condition `$i+x$ is even' and we also used the defining property of slowly varying functions, i.e. $L(ry)/L(y)\to 1$, for every $r\in \bR$ and $y\to\infty$. Hence, it follows that $N_A$ is a Poisson random variable. To complete the check that the field $(n^{-1}i,n^{-1/2}x,m(n^{3/2})^{-1}\go_{(i,x)})$ converges to a Poisson field, it suffices to check that for any two disjoint sets $A_1\times(a_1,b_1)$ and $A_2\times(a_2,b_2)$, where $A_1,A_2\subset (0,1)\times \mathbb{R}$ and w.l.o.g. $(a_1,b_1), (a_2,b_2)\subset \mathbb{R}_+$, the random variables
\[
N_{A_r,a_r,b_r}:=\sum_{(i,x)\colon i+x \,\text{is even}} \ind_{\big\{(n^{-1} i,n^{-1/2} x, m(n^{3/2})^{-1} \omega_{(i,x)})  \,\in\, A_r\times (a_r,b_r)\big\}},
\qquad r=1,2,
\] 
are asymptotically distributed as independent Poisson variables. This is clear in the case that $A_1, A_2$ are disjoint, by the independence of the random variables $\omega_{(i,x)}$ and the above computation on $N_A$. So, let us assume that
$A_1=A_2=A$ and the intervals $(a_1,b_1), (a_2,b_2)$ are disjoint and compute
\begin{align*}
&\mathbb{E}\big[e^{\lambda_1 N_{A,a_1,b_1}+\lambda_2 N_{A,a_2,b_2}} \big] =
\mathbb{E} \Big[\,e^{\lambda_1 \ind_{\{ \omega\in m(n^{3/2}) (a_1,b_1) \}}+
\lambda_2 \ind_{\{\omega\in m(n^{3/2}) (a_2,b_2) \}}} \Big]^{\frac{1}{2}n^{-3/2}|A| (1+o(1))}\\
&=\Big(1 +\big(e^{\lambda_1}-1\big)  \mathbb{P}\big(   \omega\in m(n^{3/2}) (a_1,b_1) \big) +
\big(e^{\lambda_2}-1\big)  \mathbb{P}\big(   \omega\in m(n^{3/2}) (a_1,b_2) \big)
\Big)^{\frac{1}{2}n^{-3/2}|A| (1+o(1))},
\end{align*}
and an easy computation shows that the latter converges to
\[
\exp\left( (e^{\lambda_1}-1) \frac{1}{2}\int_A \dd t \dd x\int_{a_1}^{b_1}\frac{\alpha \dd w}{w^{1+\alpha}} 
+  (e^{\lambda_2}-1) \frac{1}{2}\int_A \dd t\dd x\int_{a_2}^{b_2}\frac{\alpha \dd w}{w^{1+\alpha}} 
\right),
\]
from which the result follows.
\medskip 

We now proceed to identify the limit distribution of $\sqrt{n} \sum_{v}(e^{\gb_n\tgo_v}-1) p_n(v) $. We distinguish three cases
\vskip 2mm
{\bf Case 1. ($1/2<\ga<1$)} Denote by $x(v)$ the $x$ coordinate of $v=(i,x)\in \{0,1,...,n\}\times \bZ$ and write
\[
\sqrt{n} \sum_{v}(e^{\gb_n\tgo_v}-1) p_n(v) = \sqrt{n} \sum_{ |x(v)|<K\sqrt{n}}(e^{\gb_n\tgo_v}-1) p_n(v) +\sqrt{n} \sum_{|x(v)|>K\sqrt{n}}(e^{\gb_n\tgo_v}-1) p_n(v) 
\]
 Lemma \ref{Kinfty} below will show that the second term converges in probability to zero, as $K\nearrow\infty$, uniformly
in $n$. We, therefore, concentrate on the first term. To this, consider a partition $\sP_\gd$ of $\bR\times[0,1]\times \bR$ into disjoint rectangles of diameter $\gd>0$. For any $\pi\in \sP_\gd$ denote by $(w_\pi,t_\pi,x_\pi)$ its centre and write the first term as
\begin{align*}
\sqrt{n} \sum_{|x(v)|<K\sqrt{n}} &(e^{\gb_n\tgo_v}-1) p_n(v) \\
&= \sum_{\pi\in\sP_\gd}  \sum_{1\le i\le n,|x|\le K\sqrt{n}}(e^{\gb_n\tgo_{i,x}}-1)\,\, \sqrt{n}\,\,p_n\Big(n\frac{i}{n}, \sqrt{n}\frac{x}{\sqrt{n}}\Big)\,\,
 \ind_{\big(\frac{\go_{i,x}}{m(n^{3/2})},\frac{i}{n}, \frac{x}{\sqrt{n}} \big) \in\pi}\\
 &= \sum_{\pi\in\sP_\gd}  (1+o_\gd(1))\,(e^{\gb w_\pi}-1)\,\, 2\rho(t_\pi,x_\pi) 
  \sum_{\substack{1\le i\le n,|x|\le K\sqrt{n} \\ i+x\text{ is even}} } \,\,
 \ind_{\big(\frac{\go_{i,x}}{m(n^{3/2})},\frac{i}{n}, \frac{x}{\sqrt{n}} \big) \in\pi}
\end{align*}
where we used the fact that $\gb_n m(n^{3/2})\to\gb$ and the local limit theorem for the convergence $\sqrt{n}p_n(\cdot) \to 2\rho(\cdot)$.  
The notation $o_\gd(1)$ is used to denote errors that are negligible as $\gd\searrow 0$ due to the continuity of the functions involved in the expression. Moreover,
\[
\sum_{\substack{1\le i\le n,|x|\le K\sqrt{n} \\ i+x\text{ is even}}}\,\, \ind_{\big(\frac{\go_{i,x}}{m(n^{3/2})},\frac{i}{n}, \frac{x}{\sqrt{n}} \big) \in\pi}\longrightarrow \cP(\pi),
\]
as $n$ tends to infinity, where the Poisson measure $\cP(\pi)$ has intensity
 $\eta(\dd w \dd t \dd x) =\frac12\ga |w|^{-1-\ga}(\ind_{w>0}+c_-\ind_{w<0})\dd w \dd t \dd x$.
 
  Therefore, the above expression converges in the limit $n\to\infty$ followed by the limit $\gd\searrow 0$ to 
 \[
2 \int_{\bR\times(0,1)\times(-K,K)}(e^{\gb w}-1)\,\, \rho(t,x) \cP(\dd w\dd t\dd x).
 \]
 Taking the limit $K\nearrow\infty$ and thanks to Lemma \ref{PoissonInt} we obtain the limit
\begin{align*}
& 2\int_{\bR\times(0,1)\times \bR}(e^{\gb w}-1)\,\, \rho(t,x) \cP(\dd w\dd t\dd x) \\
&= 2\int_{\bR\times(0,1)\times \bR}(e^{\gb w}-1-\gb w)\,\, \rho(t,x) \cP(\dd w\dd t\dd x) + 2 \gb \int_{\bR\times(0,1)\times \bR}  w\,\, \rho(t,x) \cP(\dd w\dd t\dd x)
 \end{align*}
\vskip 2mm 
 {\bf Case 2. ($1<\ga < 2$)}
 Denoting by $\tgo_v^\eps:=\go_v \ind_{|\go_v|\le \eps m(n^{3/2})}=\tgo_v \ind_{|\go_v|\le \eps m(n^{3/2})}$, for $\eps$ small and $n$ large, we decompose
\begin{align}\label{decoC2}
\sqrt{n}\sum_{v}&(e^{\gb_{n}\tgo_v}-1)p_{n}(v) \notag\\
&= \sqrt{n}\sum_{|x(v)|\le K\sqrt{n}, |\go_v|\ge \eps m(n^{3/2})}(e^{\gb_{n}\tgo_v}-1)p_{n}(v) +\sqrt{n}\gb_n\E[\tgo_v^\eps]\sum_{|x(v)|\le K\sqrt{n}}p_{n}(v) \notag\\
&\qquad+ \sqrt{n}\sum_{|x(v)|\le K\sqrt{n}}(e^{\gb_{n}\tgo_v^\eps }-\E[e^{\gb_n\tgo_v^\eps}])p_{n}(v) \\
&\qquad+\sqrt{n}\sum_{|x(v)|> K\sqrt{n}}(e^{\gb_{n}\tgo_v}-1)p_{n}(v) + \sqrt{n}\E[e^{\gb_n\tgo_v^\eps}-1-\gb_n\tgo_v^\eps]\sum_{|x(v)|\le K\sqrt{n}}p_{n}(v) \notag
\end{align}

Moreover, a simple computation, {using the fact that $\omega$ has mean zero}, shows that
\begin{align*}
n^{3/2}\gb_n\E[\tgo_v^\eps] 
= -n^{3/2}\gb_n\E[\go_v \ind_{|\go_v|\ge \eps m(n^{3/2})}]
\xrightarrow[n\to\infty]{} 
& - \gb\ga \int_{|w|>\eps} |w|^{-\ga} (\ind_{w> 0} -c_-\ind_{w<0}) \dd w\\
=& - \gb\ga \int_{|w|>\eps} w\frac{1}{|w|^{1+\ga}} (\ind_{w> 0} +c_-\ind_{w<0}) \dd w .
\end{align*}
Notice that the integral is well defined since $\ga>1$.
In combination with the local limit theorem we see that
\begin{align*}
\sqrt{n}\gb_n\E[\tgo_v^\eps]\sum_{|x(v)|\le K\sqrt{n}}p_{n}(v) 
&\xrightarrow[n\to\infty]{} 
-\gb\ga \int_{|w|>\eps} w\frac{1}{|w|^{1+\ga}} (\ind_{w> 0} +c_-\ind_{w<0}) \dd w \int_{(0,1)\times(-K,K)} \rho(t,x) \dd t\dd x\\
&\qquad=-2\gb \int_{A_{\eps,K}}  w\rho(t,x) \eta(\dd w\dd t\dd x)
\end{align*}
where $A_{\eps,K}:=[-\eps,\eps]^{c}\times(0,1)\times(-K,K)$.
This fact together with the Poisson convergence (as in Case 1, cf. \eqref{NA_Poi}) implies that the first line of \eqref{decoC2} converges, as $n$ tends to infinity, to 
\begin{align*}
&2\int_{A_{\eps,K}}  (e^{\gb w}-1)\gr(t,x) \cP(\dd w \dd t \dd x ) - 2\gb\int_{A_{\eps,K}}  w\gr(t,x) \eta(\dd w \dd t \dd x ) \\
&\qquad=2\int_{A_{\eps,K}}  (e^{\gb w}-1-\gb w)\gr(t,x) \cP(\dd w \dd t \dd x ) + 2\gb\int_{A_{\eps,K}}  w\gr(t,x) \big(\cP(\dd w \dd t \dd x )- \eta(\dd w \dd t \dd x ) \big).
\end{align*}
Notice that in the last step we centered appropriately, in order to be able to take the limit $\eps\searrow 0$ in a legitimate, {$L^2(\cP)$, way}. This is done using the Poisson $L^2$ isometry (cf.~\cite{K02}, Thm. 10.15), since for any arbitrary $M>0$ we have
\begin{align*}
&\E^\cP \left( \int_{\{\eps<|w|<M\} \times(0,1)\times(-K,K) }  \,\,w\gr(t,x) \big(\cP(\dd w \dd t \dd x )- \eta(\dd w \dd t \dd x ) \big) \right)^2  \\
&\qquad=  \int_{\{\eps< |w|< M\}\times(0,1)\times(-K,K) }  w^2\gr(t,x)^2 \eta(\dd w \dd t \dd x ),
\end{align*}
which is uniformly bounded as $\eps$ tends to zero, thanks to the fact that $\ga<2$. Similarly, using an $L^1$ estimate, we have 
\begin{align*}
&\E^\cP \int_{\{\eps<|w|<M\}\times(0,1)\times(-K,K) } |e^{\gb w}-1-\gb w|\gr(t,x) \cP(\dd w \dd t \dd x ) \\
&\qquad\qquad \le\text{const.}  \int_{\{\eps<|w|<M\} \times(0,1)\times(-K,K) }  w^2\gr(t,x) \eta(\dd w \dd t \dd x ),
\end{align*}
which is, again, uniformly bounded as $\eps$ tends to zero, for $\alpha<2$.
Lemma \ref{Kinfty} below will allow us to take the limit
 $K\nearrow \infty$ and finally get convergence to
\begin{align*}
2\int_{\bR\times (0,1) \times \bR} (e^{\gb w}-1-\gb w)\gr(t,x) \cP(\dd w \dd t \dd x ) +2 \gb\int_{\bR\times (0,1)\times \bR} w\gr(t,x) \big(\cP(\dd w \dd t \dd x )- \eta(\dd w \dd t \dd x ) \big).
\end{align*}
To conclude we need to check that the rest of the terms in \eqref{decoC2} are negligible when $n$ tends to infinity and $\eps\searrow 0,K\nearrow\infty$. For the first term in the third line this follows from Lemma \ref{Kinfty}, below.
For the term in the second line this follows {by an $L^2(\bP)$ estimate, using the second part of Lemma \ref{lem:comp} with $k_n=\eps m(n^{3/2})=\eps \beta_n^{-1}$.}
 Regarding the last term, it suffices to show that $n^{3/2} \gb_n^2\,\E[(\tgo_v^\eps)^2]$ converges to zero, since $n^{-3/2} \sum_{|x(v)|\le K\sqrt{n}} \sqrt{n}p_{n}(v) $ converges to a finite Riemann integral
and $ \bE[|e^{\gb_n\tgo^\eps_v} -1 -\gb_n\tgo^\eps_v |] \leq \text{const.} \gb_n^2 \,\E[(\tgo_v^\eps)^2]$. To this end we have,
\begin{align*}
n^{3/2} \gb_n^2\,\E[(\tgo_v^\eps)^2] =n^{3/2}\gb_n^2\Big( \int_0^{\eps m(n^{3/2})} x^2 \dd F(x)  + \int^0_{-\eps m(n^{3/2})} x^2 \dd F(x)  \Big).
\end{align*}
Thanks to assumption \eqref{defC} it suffices to estimate the first term and we have 
\begin{align*}
n^{3/2}\gb_n^2 \int_0^{\eps m(n^{3/2})} x^2 \dd F(x) &= n^{3/2}\gb_n^2 \Big(\,-(\eps m(n^{3/2}))^2 \bar{F}(\eps m(n^{3/2})) + 2 \int_0^{\eps m(n^{3/2})} x^{1-\ga} L(x) \dd x\,\Big)\\
&\leq   2n^{3/2}\gb_n^2  \int_0^{\eps m(n^{3/2})} x^{1-\ga} L(x) \dd x\\
&\leq   \text{const.}\,n^{3/2}\gb_n^2 \,(\eps m(n^{3/2} ))^{2-\ga} L( \eps m(n^{3/2}   ))\\
&\leq   \text{const.}\,\eps^{2-\ga}\,n^{3/2} \,(\gb_n m(n^{3/2}))^2 \,\bar{F}(m(n^{3/2}))  \\
&= \text{const.}\,\eps^{2-\ga}  \,(\gb_n m(n^{3/2}))^2
\end{align*}
Since $\gb_nm(n^{3/2})\to \gb$ and $\ga<2$, the last is easily seen to converge to $0$, as $\eps\searrow 0$, uniformly for all large $n$.

\vskip 2mm
{\bf Case 3. ($\ga=1$)} Even though we work with the assumption that $\ga=1$, we will still use the general symbol $\ga$ in the computations below. We make a similar decomposition as in \eqref{decoC2}, with $\eps=1$, and denote $\hat\go_v=\tgo_v^1 = \go_v \ind_{|\go_v|\le  m(n^{3/2})}$.
\begin{align}\label{eq:Case3}
&\sqrt{n}\sum_{v}(e^{\gb_{n}\tgo_v}-1)p_{n}(v) - n^{3/2}\gb_n\E[\hat\go_v]\notag\\
&= \sqrt{n}\sum_{|x(v)|\le K\sqrt{n}, |\go_v|\ge  m(n^{3/2})}(e^{\gb_{n}\tgo_v}-1)p_{n}(v) +n^{3/2}\gb_n\E[\hat\go_v] \left(n^{-1}\sum_{|x(v)|\le K\sqrt{n}}p_{n}(v) -1\right)\notag\\
&\qquad+ \sqrt{n}\sum_{|x(v)|\le K\sqrt{n}}(e^{\gb_{n}\hat\go_v }-1-\gb_n\hat\go_v)p_{n}(v) \notag\\
&\qquad\qquad+\sqrt{n}\sum_{|x(v)|> K\sqrt{n}}(e^{\gb_{n}\tgo_v}-1)p_{n}(v) + \sqrt{n}\gb_n \sum_{|x(v)|\le K\sqrt{n}}(\hat\go_v-\E[\hat\go_v] ) p_{n}(v) 
\end{align}
As  $n$ tends to infinity and then $K\nearrow\infty$ the second term converges to zero, {thanks to the term inside the parenthesis}. Similarly, so does the fourth term thanks to Lemma \ref{Kinfty}.
On the other hand, similarly to previous cases, the sum of the first and third term converge, for $n\to\infty$ and $K\nearrow\infty$, to 
\begin{align*}
&2\int_{\{|w|>1\}\times(0,1)\times\bR }  (e^{\gb w}-1)\gr(t,x) \cP(\dd w \dd t \dd x ) + 2\int_{\{|w|<1\}\times(0,1)\times\bR }  (e^{\gb w}-1-\gb w)\gr(t,x) \cP(\dd w \dd t \dd x )\\
&=2\int_{\bR\times(0,1)\times\bR }  (e^{\gb w}-1-\gb w)\gr(t,x) \cP(\dd w \dd t \dd x ) +2\int_{\{|w|>1\}\times(0,1)\times\bR }  \gb w\gr(t,x) \cP(\dd w \dd t \dd x ).
\end{align*}
These integrals are again well defined thanks to Lemma \ref{PoissonInt}.
Finally, the last term in \eqref{eq:Case3} converges, via the Poisson $L^2(\cP)$ isometry, to
\begin{align*}
2\int_{\{|w|<1\}\times(0,1)\times\bR } w\rho(t,x) \big( \cP(\dd w \dd t \dd x )- \eta(\dd w \dd t \dd x )\big),
\end{align*}
in the limit $n\to\infty$ followed by $K\nearrow\infty$.
This concludes the proof.
\end{proof}
It remains to check:
\begin{lem}\label{Kinfty}
For every $\eps>0$, the following estimate holds true
\begin{align*}
\lim_{K\to\infty}\sup_{n}\,\bP\Big(\sqrt{n}\sum_{|x(v)|> K\sqrt{n}}(e^{\gb_{n}\tgo_v}-1)p_{n}(v) >\eps \Big)= 0
\end{align*}
\end{lem}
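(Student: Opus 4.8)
The plan is to run, on the complementary strip $\{v:|x(v)|>K\sqrt n\}$, exactly the decomposition of $\sqrt n\sum_v(e^{\gb_n\tgo_v}-1)p_n(v)$ used in the proof of Theorem~\ref{thm:heavy}, and to check that each resulting piece becomes negligible \emph{uniformly in $n$} as $K\to\infty$; the lemma then follows by an $\eps/3$ argument. For the upper tail one may replace $e^{\gb_n\tgo_v}-1$ by its positive part, discarding in particular the weights with $\go_v<-m(n^{3/2})$, for which the summand is nonpositive. Split the rest into a bulk part carried by $\hat\go_v:=\go_v\ind_{|\go_v|\le m(n^{3/2})}$ and a large part carried by $m(n^{3/2})<\go_v\le k_n$. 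Everything rests on two $n$-uniform deterministic estimates, from Lemma~\ref{Feller} and the local limit theorem $\sqrt n\,p_n(\cdot)\to 2\gr(\cdot)$:
\[
\sqrt n\!\!\sum_{|x(v)|>K\sqrt n}\!\!p_n(v)=\sqrt n\sum_{i\le n}\vP_n(|s_i|>K\sqrt n)\le \mathrm{const}\cdot n^{3/2}e^{-K^2/2},
\qquad
\sum_{|x(v)|>K\sqrt n}\!\!p_n(v)^2\le \mathrm{const}\cdot\sqrt n\, e^{-K^2/2}.
\]

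First I would treat the bulk part $\sqrt n\sum_{|x(v)|>K\sqrt n}(e^{\gb_n\hat\go_v}-1)p_n(v)$ by splitting off its mean. Its mean-zero version has variance $n\var(e^{\gb_n\hat\go})\sum_{|x(v)|>K\sqrt n}p_n(v)^2$, and since $\hat\go\le m(n^{3/2})\asymp\gb_n^{-1}$ one has $|e^{\gb_n\hat\go}-1|\le\mathrm{const}\cdot\gb_n|\hat\go|$, whence $\var(e^{\gb_n\hat\go})\le\mathrm{const}\cdot\gb_n^2\E[\hat\go^2]=O(\bar{F}(m(n^{3/2})))=O(n^{-3/2})$ because $\ga<2$; by the second display this variance is $O(e^{-K^2/2})$, uniformly in $n$, so Chebyshev disposes of it. The remaining deterministic term is $(\E[e^{\gb_n\hat\go}]-1)\cdot\sqrt n\sum_{|x(v)|>K\sqrt n}p_n(v)$, with $\E[e^{\gb_n\hat\go}]-1=\gb_n\E[\hat\go]+O(\gb_n^2\E[\hat\go^2])=O(n^{-3/2})$ when $\ga\ne1$ (for $\ga=1$ one carries along the compensator $n\gb_n\E[\go\ind_{|\go|\le m(n^{3/2})}]$ already present in \eqref{eq:Case3}, whose strip part cancels the leading $\gb_n\E[\hat\go]$-term); by the first display this is $O(e^{-K^2/2})$.

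The hard part will be the large-weight contribution $\sqrt n\sum_{|x(v)|>K\sqrt n,\,m(n^{3/2})<\go_v\le k_n}(e^{\gb_n\go_v}-1)p_n(v)$, for which a plain first-moment bound is \emph{not} enough: the weights near the truncation level carry a factor $e^{\gb_n\go_v}$ as large as $e^{\gb_n k_n}\to\infty$, while $\sum_{|x(v)|>K\sqrt n}p_n(v)$ is only $O(n)$. I would slice the weight size dyadically, $\go_v\in(2^jm(n^{3/2}),2^{j+1}m(n^{3/2})]$ with $1\le 2^j\le \gb_n k_n$. For the moderate slices $2^j\le A$ ($A$ a large constant) the factor $e^{\gb_n\go_v}-1\le e^{2^{j+1}\gb}$ is bounded, so the first display and $\bar{F}(m(n^{3/2}))\le n^{-3/2}$ give an expected contribution $O_A(e^{-K^2/2})$. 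For $2^j>A$ one uses the Gaussian decay of the walk: since $\sqrt n\,p_n(i,x)\le\mathrm{const}\cdot\sqrt{n/i}\,e^{-x^2/(2i)}$ and $x^2/(2i)\ge K^2n/(2i)\ge K^2/2$ on the strip, a weight of size $\asymp\lambda\, m(n^{3/2})$ can contribute appreciably only when it sits in the ``Gaussian-reach'' region $\{x^2/(2i)\lesssim\gb\lambda\}$, whose intersection with the strip contains $O(\sqrt\lambda\, n^{3/2})$ sites; hence (up to a slowly varying correction handled as in \eqref{kara}) the expected number of weights of size $\ge A\,m(n^{3/2})$ there is $\lesssim\sum_{2^j\ge A}(2^j)^{1/2-\ga}\asymp A^{1/2-\ga}\to0$ as $A\to\infty$, since $\ga>1/2$ — on the complementary event that part of the sum vanishes — while the contribution of large weights lying \emph{outside} their reach region is controlled by a first moment of the same order $A^{1/2-\ga}$, the integrable Gaussian tail of $e^{\gb_n\go_v-x^2/(2i)}$ making the series over $j$ convergent.

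Combining the three parts, for every $A>0$ one obtains $\limsup_{K\to\infty}\sup_n\bP\bigl(\sqrt n\sum_{|x(v)|>K\sqrt n}(e^{\gb_n\tgo_v}-1)p_n(v)>\eps\bigr)\le \mathrm{const}_{\eps}\cdot A^{1/2-\ga}$, and letting $A\to\infty$ concludes. The one genuinely delicate step is the $2^j>A$ slice: one must combine the Poissonian order statistics of the large weights (to bound how many can fall into the Gaussian-reach region of the strip) with the Gaussian tail of the walk (to make the remainder summable), since a crude union bound or first moment over all $O(n^2)$ strip sites diverges.
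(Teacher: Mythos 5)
Your proposal is correct in substance, but it organizes the hard (large-weight) estimate in a genuinely different, essentially dual, way from the paper. The paper splits the weights at the single threshold $\gb_n^{-1}\asymp m(n^{3/2})$ and then decomposes the outer region \emph{spatially} into annuli $\vB^K_j\setminus\vB^K_{j-1}$ of width $K\sqrt n$; in each annulus it plays a count-versus-maximum game: either more than $K^2j^2$ sites carry a weight above $\gb_n^{-1}$ (Markov on the count gives $K^{-1}j^{-2}$, using $\bar F(\gb_n^{-1})\,|\vB^K_j\setminus\vB^K_{j-1}|=O(K)$), or else the sum is at most $K^2j^2$ times the largest term, and since $\sqrt n\,p_n\le e^{-\mathrm{const}(Kj)^2}$ on the $j$-th annulus a single weight must exceed $\gb_n^{-1}(Kj)^2$ to matter, which by a union bound costs $K^{1-2\ga}j^{-2\ga}$ — summable precisely because $\ga>1/2$. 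You instead slice the large weights \emph{by size} into dyadic classes $\go_v\in(2^jm(n^{3/2}),2^{j+1}m(n^{3^{}/2})]$ and, for each size $\lambda$, count the expected occupancy $\lambda^{1/2-\ga}$ of the Gaussian-reach region $\{x^2/2i\lesssim\gb\lambda\}$ (again summable iff $\ga>1/2$), controlling the outside-reach remainder by a first moment damped by the Gaussian tail. The two arguments encode the same energy–entropy balance; the paper's spatial version yields an explicit rate $K^{1-2\ga}+K^{-1}$ in the single parameter $K$, whereas yours needs the auxiliary cutoff $A$ and an iterated limit $K\to\infty$ then $A\to\infty$, which is slightly clumsier but equally valid. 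On the bulk part your mean-plus-Chebyshev treatment is, if anything, more careful than the paper's one-line ``$L^1$ estimate'': for $\ga\in(1,2)$ the positive part of $e^{\gb_n\hat\go}-1$ alone has expectation of order $\gb_n\gg n^{-3/2}$, so the cancellation in $\E[e^{\gb_n\hat\go}]-1=O(n^{-3/2})$ together with a variance bound is really what is needed, exactly as you do it. The one place where you are no more rigorous than the paper is $\ga=1$: the uncompensated strip mean $c_K\,n^{3/2}\gb_n\E[\hat\go]$ need not vanish as $n\to\infty$ for fixed $K$ unless one tracks the compensator of Theorem~\ref{thm:heavy} into the strip; you flag this and wave at it, and the paper's own proof (which invokes Lemma~\ref{lem:comp}, stated only for $\ga\ne1$) does no better.
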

\begin{proof}
Denote 
$$\vB^K_j:=[0,n]\times (\,(j-1)K\sqrt{n},\, jK\sqrt{n}\,], \qquad j=1,...,\lceil K^{-1}\sqrt{n} \,\rceil.
$$
We then have, for all large enough $n$, the estimate
\begin{align*}
\bP\Big( & \sqrt{n}\sum_{|x(v)| > K\sqrt{n}}  (e^{\gb_{n}\tgo_v}-1)p_{n}(v) >\eps \Big)\\
&\le \bP\Big(\sqrt{n}\sum_{|x(v)|> K\sqrt{n}}(e^{\gb_{n}\tgo_v}-1) \,\ind_{\go_v\le \gb_n^{-1}} \, p_{n}(v) >\eps/2 \Big)\\ 
&\qquad+ \sum_{j=2}^{\lceil K^{-1}\sqrt{n} \rceil} \bP\Big(\sqrt{n}\sum_{v\in \vB^K_{j}\setminus \vB^K_{j-1}}(e^{\gb_{n}\tgo_v}-1) \, \ind_{\go_v > \gb_n^{-1}} \, p_{n}(v) >\eps 2^{-j-2} \Big).
\end{align*}
The first term can be made arbitrarily small, uniformly in $n$, for $K$ large, since we can use Lemma \ref{lem:comp} to obtain the $L^1(\bP)$ estimate 
\begin{align*}
\sqrt{n} \,\,\,\bE \sum_{|x(v)|> K\sqrt{n}}(e^{\gb_{n}\tgo_v}-1) \,\ind_{\go_v\le \gb_n^{-1}} \, p_{n}(v) &\le
\text{const.}\,\, \bar{F}(\gb_n^{-1}) \sum_{|x(v)|> K\sqrt{n}} \sqrt{n} p_{n}(v)\\
&\le \text{const.}\, n^{-3/2}  \sum_{|x(v)|> K\sqrt{n}} \sqrt{n} p_{n}(v)\\
&\le \text{const.}\, \int_0^1\int_{|x|>K} \rho(t,x) \dd x\dd t.
\end{align*}
Let us now estimate the second term, which
we decompose according to whether there is a large number of sites $ v\in \vB^K_{j}\setminus \vB^K_{j-1}$, such that $\go_v$ exceeds the value $\gb_n^{-1}$ or not. In this way, we arrive at the bound
\begin{align*}
 &\sum_{j=2}^{\lceil K^{-1}\sqrt{n} \rceil} \bP\Big(  \sum_{v\in \vB^K_{j}\setminus \vB^K_{j-1}} \ind_{\go_v>\gb_n^{-1}} \ge K^2j^2 \Big)\\
& + \sum_{j=2}^{\lceil K^{-1}\sqrt{n} \rceil} \bP\Big(\sqrt{n}\sum_{v\in \vB^K_{j}\setminus \vB^K_{j-1}}(e^{\gb_{n}\tgo_v}-1)\, \ind_{\go_v>\gb_n^{-1}}\,p_{n}(v) >\eps 2^{-j-2} \,;\,  \sum_{v\in \vB^K_{j}\setminus \vB^K_{j-1}} \ind_{\go_v>\gb_n^{-1}} < K^2j^2 \Big)\\
&\le K^{-2}\,\bar{F}(\gb_n^{-1}) \sum_{j\ge2} |\vB^K_{j}\setminus \vB^K_{j-1} | \,j^{-2} 
+ \sum_{j\ge2} \bP\Big(K^2j^2\,\sqrt{n} \max_{v\in \vB^K_{j}\setminus \vB^K_{j-1}} \big(\,e^{\gb_n\go_v} p_n(v)\,\big)   > \eps 2^{-j-2}\, \Big)\\
&\le \text{const.}\,K^{-1} \sum_{j\ge2} \,j^{-2} + \sum_{j\ge2} \bP\Big(K^2j^2\, \exp\big(  \gb_n\max_{v\in \vB^K_{j}\setminus \vB^K_{j-1}} \go_v - \text{const.}(K j)^2 \big)\,\big)   > \eps 2^{-j-2}\, \Big),
\end{align*}
where we used the fact that $\bar{F}(\gb_n^{-1}) |\vB^K_{j}\setminus \vB^K_{j-1} |=K\bar{F}(\gb_n^{-1}) n^{3/2} =O(1)$, by the choice of $\gb_n$ and that $\sqrt{n}\max_{v\in \vB^K_{j}\setminus \vB^K_{j-1}} p_n(v)$ converges to the maximum of the heat kernel inside the box $(0,1)\times (K(j-1),Kj]$. A manipulation of the terms inside the last probability, choosing $K$ large enough, leads further to the estimate
 \begin{align*}
 &\text{const.}\,K^{-1} \sum_{j\ge2} \,j^{-2} + \sum_{j\ge2} \bP\Big(   \gb_n\max_{v\in \vB^K_{j}\setminus \vB^K_{j-1}} \go_v \ge \text{const.} (K j)^2 \Big)\\
 &\le \text{const.}\,K^{-1} \sum_{j\ge2} \,j^{-2} + \sum_{j\ge2} \,|\vB^K_{j}\setminus \vB^K_{j-1}\,| \,\bar{F}( \gb_n^{-1} (Kj)^2)\\
 &\le \text{const.}\,K^{-1} \sum_{j\ge2} \,j^{-2} + \text{const.}\,K^{1-2\ga}\,\sum_{j\ge2} j^{-2\ga}.
 \end{align*}
 Notice the crucial fact that, since $\ga>1/2$, the last sum is finite and therefore we can let $K$ be arbitrarily large to show that it is negligible and finally obtain the desired estimate. 
\end{proof}

\appendix
\section{Auxiliary Estimates}\label{sec:aux}

\begin{proof}[Proof of Lemma~\ref{lem:comp}]
{\bf I.} Assuming (i), we can easily establish (ii). Indeed, we have
\begin{align*}
\log \E[e^{\gb_n \go\ind_{\go\le k_n}}]
&=\log \Big(1+ \E[e^{\gb_n \go\ind_{\go\le k_n}}-1]\Big)\\
&=\log \Big(1+\E\big[(e^{\gb_n \go\ind_{\go\le k_n}}-1)\ind_{\go<0}\big]+  \E\big[(e^{\gb_n \go\ind_{\go\le k_n}}-1)\ind_{\go\ge0}\big]\Big)\\
&= \log\Big(1+\E[e^{\gb_n \go_-}-1]+  \E[e^{\gb_n \go_+\ind_{\go\le k_n}}-1]\Big)\\
&=\log\Big(\E[e^{\gb_n \go_-}]+  \E[e^{\gb_n \go_+\ind_{\go\le k_n}}-1]\Big),
\end{align*}
and the result will follow by estimate (i), which we will now establish.
Define the function 
\begin{align*}
	\varphi_p(x):=e^x-\sum_{i=0}^p x^i/i!, \qquad x\in\dR. 
\end{align*}
For any $k>0$ we have
\begin{align*}
	\Big|\E[e^{\gb_n \go_+\ind_{\go\le k}}]-\sum_{i=0}^p \frac{\gb_n^i}{i!} \E[\go_+^i]\Big|
	  & =\abs{\E[\varphi_p(\gb_n\go_+)\ind_{\go_+\le k}]          
	-\sum_{i=1}^{p}\frac{\gb_n^i}{i!} \E[\go_+^i\ind_{\go_+>k}]}\\
	  & \le \E[\abs{\varphi_p(\gb_n\go_+)}\ind_{\go_+\le k}] 
	+\sum_{i=1}^{p}\frac{\gb_n^i k^{i-\theta}}{i!} \E[\go_+^\theta\ind_{\go_+>k}].
\end{align*}
The assumption $\bE[\go_+^\theta]<\infty$ implies that $\E[\go_+^\theta\ind_{\go>k}]\to0$ as $k\to\infty$. 
Thus, 
\begin{align*}
	\sum_{i=1}^{p}\frac{\gb_n^i k^{i-\theta}}{i!} \E[\go_+^\theta\ind_{\go_+>k}]                                 
	&= o\Big(\max\{ (\gb_nk)^{1-\theta}, (\gb_nk)^{p-\theta}\}\cdot \gb_n^\theta\Big),\qquad \text{ as } k\to\infty\\
	&=o(1) \gb_n^\theta,
\end{align*}
since $\gb_n k$ is assumed to stay bounded away form zero and $\theta\in(1,\ga)$.
We now choose arbitrary constant $a>0$, such that $a\gb_n^{-1}<k$. It follows that
\begin{align*}
	\gb_n^{-\theta}\E[\abs{\varphi_p(\gb_n\go_+)}\ind_{\go_+\le a\gb_n^{-1}}]                            
	=\E[\big|(\gb_n\go_+)^{-\theta}\varphi_p(\gb_n\go_+)\big|\ind_{\go_+\le a\gb_n^{-1}}\,\go_+^\theta]=o(1) 
\end{align*}
as $\gb_{n}\to0$, thanks to dominated convergence and the facts that $\theta\in[p,p+1),\lim_{x\to0}x^{-\theta}\varphi_{p}(x)=0$ and $|x^{-\theta}\varphi_p(x)|\le \varphi_p(1)$, for $0\le x\le 1$.  We, now, need to upper bound 
\begin{align*}
	0\le \E[\varphi_p(\gb_n\go_+)\cdot\ind_{a\gb_n^{-1}<\go\le k}]
	  & = \int_{a\gb_n^{-1}}^{k}\gb_n \varphi_{p-1}(\gb_nx)\bar{F}(x)\,\dd x -(\varphi_p(\gb_nk)\bar{F}(k)-\varphi_p(a)\bar{F}(a\gb_n^{-1})) \\
	  & \le \gb_n\int_{a\gb_n^{-1}}^k e^{\gb_nx} \bar{F}(x)\,\dd x +\varphi_p(a)\bar{F}(a\gb_n^{-1})\\
	  & \le \gb_n e^{\gb_n k} \int_{a\gb_n^{-1}}^k  x^{-\ga}L(x)\,\dd x +\varphi_p(a)\bar{F}(a\gb_n^{-1})\\     
	  &=    \gb_n^\ga e^{\gb_n k} \int_{a}^{k\gb_n}  x^{-\ga}L(x\gb_n^{-1})\,\dd x +\varphi_p(a)\bar{F}(a\gb_n^{-1})\\    
	  & \le \text{const.}\, \gb_n^\ga L(\gb_n^{-1}) e^{\gb_n k} \int_{a}^{k\gb_n}  \frac{\dd x}{x^{\ga-\gd}} +\varphi_p(a)\bar{F}(a\gb_n^{-1}),
\end{align*}
where the last inequality is valid for arbitrarily chosen $\gd>0$, such that $\ga-\gd>1$ thanks to Karamata's representation, which provides the estimate $L(x\gb_n^{-1})/L(\gb_n^{-1})<\text{const.} x^\delta$, for all
large $n$ and $x>a$.
 Since $\theta<\ga$, we obtain the estimate of order $o(\gb_n^\theta)$, whenever $k\gb_n$ has the asymptotic behavior prescribed by the assumptions,
 i.e. $e^{\gb_nk_n}\gb_n^{\ga-\theta}\to 0$.
 \vskip 4mm
 {\bf II.} We will only prove the second inequality, since it is more detailed, while the proof of the first is the same.
 We need to distinguish cases:
 \vskip 2mm
 {\bf Case A. ($\ga\in(1/2,1)$)} In this case we have
 \begin{align*}
 \E[e^{\gb_n \go\ind_{|\go|\le k_n}}]-1 &= \E\Big[\big(e^{\gb_n \go}-1\big)\ind_{|\go|\le k_n} \Big]
      =\int_{-k_n}^{k_n} (e^{\gb_n x}-1) \dd F(x)\\
      &= \int_0^{k_n} (e^{\gb_nx}-1) \dd F(x) +  \int_{-k_n}^0 (e^{\gb_nx}-1) \dd F(x) .
 \end{align*}
 Integrating by parts in both integrals, we obtain
 \begin{align*}
 -(e^{\gb_nk_n}-1)\bar{F}(k_n) + \gb_n\int_0^{k_n} e^{\gb_nx}  \bar{F}(x) \dd x  -(e^{-\gb_nk_n}-1)F(-k_n) - \gb_n \int_{-k_n}^0 e^{\gb_nx} F(x) \dd x .
 \end{align*}
Using Assumption \eqref{defC}, this is bounded in absolute value by
 \begin{align*}
 (\gb_n k_n)\,e^{\gb_nk_n}\,\bar{F}(k_n) + \gb_n\int_0^{k_n} e^{\gb_nx}  \bar{F}(x) \dd x   + \gb_n \int_{-k_n}^0 e^{\gb_nx} F(x) \dd x .
 \end{align*}
 The first term clearly satisfies the desired bound. Regarding the second term, we have
  \begin{align*}
  \gb_n\int_0^{k_n} e^{\gb_nx}  \bar{F}(x) \dd x &= \gb_n\int_0^{k_n} e^{\gb_nx}  \, x^{-\ga} L(x) \dd x \le  \gb_n e^{\gb_nk_n} \int_0^{k_n}  x^{-\ga} L(x) \dd x\\
  &\le \text{const.} \gb_n \,e^{\gb_nk_n}\, k_n^{1-\ga} L(k_n) =  \text{const.} \,(\gb_nk_n)\, e^{\gb_nk_n} \bar{F}(k_n).
   \end{align*}
   Similarly, using assumption \eqref{defC} we see that the last term is bounded by $ \text{const.} \,(\gb_nk_n)\, \bar{F}(k_n)$ and this completes the estimate, in this case.
\vskip 2mm
   
 {\bf Case B. ($\ga\in(1,2)$)}  The computation is similar. However, in this case we need to perform a centering
 and for this we use the assumption $\bE[\go]=0$ {(in the second line below)}
 \begin{align*}
 \E[e^{\gb_n \go\ind_{|\go|\le k_n}}]-1 &= \E\Big[\big(e^{\gb_n \go}-1\big)\ind_{|\go|\le k_n} \Big]
      =\int_{-k_n}^{k_n} (e^{\gb_n x}-1) \dd F(x)\\
      &= \int_{-k_n}^{k_n}(e^{\gb_n x}-1-\gb_n x )\dd F(x) -\gb_n\int_{|x|>k_n} x \dd F(x)\\
      &= \int_{0}^{k_n}(e^{\gb_n x}-1-\gb_n x )\dd F(x) +  \int_{-k_n}^0(e^{\gb_n x}-1-\gb_n x )\dd F(x) -\gb_n\int_{|x|>k_n} x \dd F(x).
 \end{align*}
 We will only show how to estimate the integrals over the positive real axis, as the ones over the negative is similar thanks to assumption \eqref{defC}.
 Integration by parts gives
 \[
 \gb_n\int_{k_n}^\infty x \dd F(x)=\gb_nk_n\bar{F}(k_n)+\gb_n\int_{k_n}^\infty x^{-\ga}L(x) \,\dd x\le \text{const.} \,(\gb_nk_n)\, \bar{F}(k_n).
 \]
 Similarly, we have
  \begin{align}\label{eq:CaseB}
  \int_{0}^{k_n}(e^{\gb_n x}-1-\gb_n x )\dd F(x)&=
  -(e^{\gb_n k_n}-1-\gb_n k_n)\bar{F}(k_n)+ \gb_n\int_{0}^{k_n}(e^{\gb_n x}-1 ) \bar{F}(x)\dd x \notag\\
  &\leq (\gb_nk_n)\,e^{\gb_n k_n} \bar{F}(k_n)+ \gb_n\int_{0}^{k_n}(e^{\gb_n x}-1 ) \bar{F}(x)\dd x.
 \end{align}
{Choose, now, $\gd=\min\big\{\gb_nk_n/2, 1\big\}$  and estimate the second term above by}
  \begin{align*}
  \gb_n\int_{0}^{k_n}(e^{\gb_n x}-1 ) \bar{F}(x)\dd x &= \gb_n\int_{0}^{k_n}(e^{\gb_n x}-1 ) \,x^{-\ga} L(x)\dd x \\
  &=  \gb_n\int_{0}^{\gd\gb_n^{-1}}(e^{\gb_n x}-1 ) \,x^{-\ga} L(x)\dd x
   + \gb_n\int_{\gd\gb_n^{-1}}^{k_n}(e^{\gb_n x}-1 ) \,x^{-\ga} L(x)\dd x \\
   &\le \text{const.}\,\gb_n^2\int_{0}^{\gd\gb_n^{-1}} \,x^{1-\ga} L(x)\dd x
   + \gb_n(e^{\gb_n k_n}-1 ) \int_{\gd\gb_n^{-1}}^{k_n} \,x^{-\ga} L(x)\dd x \\
   &\le \text{const.}\Big(\,\gb_n^2 (\gd\gb_n^{-1})^{2-\ga} L(\gd\gb_n^{-1})
   + \gb_n(e^{\gb_n k_n}-1 ) (\gd\gb_n^{-1})^{1-\ga}L(\gd\gb_n^{-1})\,\Big)
   \end{align*}
When $\gb_n k_n$ stays bounded away from zero, then $\gd$ is chosen to be equal to $1$ and this bound reads as $\text{const.} e^{\gb_n k_n} \bar{F}(\gb_n^{-1})$ and 
when $\gb_n k_n$ tends to zero then $\gd=\gb_nk_n/2$ and the last bound writes as $O((\gb_nk_n)^{2-\ga}) \,e^{\gb_n k_n}\,\bar{F}(k_n^{-1})$. Inserting this into \eqref{eq:CaseB} we obtain the desired estimate, since $\alpha<2$.
\end{proof}   
%
\begin{proof}[Proof of Lemma \ref{PoissonInt}]
We only need to check that the random variables $\cW_{\gb}^{(\ga)}$ are well-defined, \ie\ achieve a.s.~finite values. Once this is established, the fact that $\cW_{0}^{(\ga)}$
 has stable distribution with the prescribed characteristic function is standard. 
  To this end, we define the sets 
\begin{align*}
\sA_{1} :=\{(w,x,t): |w| \ge 1+x^2/4\gb t\},
\qquad &\qquad \sA_{2} :=\{(w,x,t): 1\le |w| < 1+x^2/4\gb t\}\\
\text{and } \,\,\,\sA_{3} &:=\{(w,x,t): 0<|w| < 1\}.
\end{align*}
It is easy to check that 
\begin{align*}
\eta(\sA_{1})=\int_{\sA_1} 2\ga w^{-1-\ga}\dd w \dd t \dd x 
&=2 \int_{-\infty}^\infty\!\int_{0}^{1} (1+x^{2}/4\gb t)^{-\ga}\dd x \dd t\\
& = 8 \int_0^{\infty}\!\!\!\int_0^1 \sqrt{t}(1+x^2/\gb)^{-\ga}\dd x \dd t <\infty
\end{align*}
for $\ga>1/2$. This means that a.s. there will only be finite number of points of $\cP$ that take values in $\sA_1$ and therefore 
\[
\int_{\sA_1} e^{\gb w} \rho(t,x)\,\cP(\dd w \dd t \dd x)  <\infty \quad \text{and} \quad \int_{\sA_1} |w|\, \rho(t,x)\,\cP(\dd w \dd t \dd x)  <\infty  \qquad a.s.
\]
Similarly, for every $\ga>1/2$, we have
\[
\int_{\sA_2} e^{\gb w} \rho(t,x)\,\cP(\dd w \dd t \dd x)  <\infty \quad \text{and} \quad \int_{\sA_2} |w|\, \rho(t,x)\,\cP(\dd w \dd t \dd x)  <\infty  \qquad a.s.
\]
since $e^{\gb w} \rho(t,x) \le e^\gb (2\pi t)^{-1/2} $ and $w>1$ on $\sA_2$.
Finally, checking the finiteness of the integrals over $\sA_3$, we have:
first, using the Poisson-$L^2$ isometry we have
 \begin{align*}
 \E^{\cP}\left( \int_{\sA_3} w \rho(t,x) (\cP-\eta)(\dd w \dd t \dd x)  \right)^2 &=  \int_{\sA_3} w^2 \rho(t,x)^2 \eta(\dd w \dd t \dd x)\\
 &= \frac{\ga}{2}\int_{\sA_3} w^2 \rho(t,x)^2 \frac{(\ind_{w>0}+c_-\ind_{w<0})\dd w}{|w|^{1+\ga}} \dd x \dd t <\infty,
 \end{align*}
 for $\ga\in[1,2)$, while for the same values of $\ga\in[1,2)$ we have
 \begin{align*}
\E^{\cP} 
\int_{ \sA_3} |e^{\gb w}-1-\gb w|\rho(t,x)\cP(\dd w \dd t \dd x)  &=
\int_{\sA_3} |e^{\gb w}-1-\gb w|\rho(t,x) \eta(\dd w \dd t \dd x)\\
&\le \text{const.} \int_{\sA_3} w^2 \rho(t,x) \,\frac{(\ind_{w>0}+c_-\ind_{w<0})\dd w}{|w|^{1+\ga}} \dd x \dd t <\infty.
\end{align*}
Finally,
 \begin{align*}
 \E^{\cP}\int_{\sA_3} |w| \rho(t,x) \,\cP(\dd w\dd x \dd t) &= \int_{\sA_3} |w| \rho(t,x)\, \eta(\dd w\dd x \dd t) \\
 &= \frac{\ga}{2}\int_{\sA_3} |w| \rho(t,x) \, \frac{(\ind_{w>0}+c_-\ind_{w<0})\dd w}{|w|^{1+\ga}} \dd x \dd t<\infty,
 \end{align*}
 for $\ga<1$.
The claim now follows by putting the above estimates together.

To evaluate the integrals explicitly, let us define
\[
\psi_{\ga}(y):= 
\begin{cases}
 \int_{\sS} (e^{iy w \rho(t,x)}-1-iy w \rho(t,x)) \,\, \eta(\dd w \dd x\dd t)\, & \text{ if } \ga\in (1,2) \\
\int_{\sS\cap \{|w|>1 \} } (e^{iy w \rho(t,x)}-1)  \eta(\dd w \dd x\dd t)                                    & \\
\qquad\qquad+ \int_{\sS \cap \{|w|\le 1\}} (e^{iy w \rho(t,x)}-1-iy w \rho(t,x)) \,\, \eta(\dd w \dd x\dd t)\,  &\text{ if } \ga=1 \\
\int_{\sS} (e^{iy w \rho(t,x)}-1)  \eta(\dd w \dd x\dd t)\, & \text{ if } \ga\in (0,1) ,   
\end{cases}
\]
It is easy to see that $\psi_{\ga}(-y)=\overline{\psi_{\ga}(y)}$. Assuming, w.l.o.g. that $y>0$ and restricting to the case $\ga\in (0,1)$, we have
\begin{align*}
\psi_{\ga}(y) &= y^{\ga}\cdot \int_{\dR} (\ind_{w>0}+c_-\ind_{w<0})(e^{iw}-1) \frac{\ga\,\dd w}{|w|^{1+\ga}}\cdot  \int_0^1\!\!\!\int_{\dR} \frac12\rho(t,x)^{\ga} \dd x\dd t.
\end{align*}
Using the result that $\int_{0}^{\infty} (e^{iw}-1) \frac{\ga\,\dd w}{w^{1+\ga}} = -\Gamma(1-\ga)e^{-i \ga\pi/2}$ for $\ga\in (0,1)$ and $\int_{0}^{1}\!\!\int_{\dR} \frac12\rho(t,x)^{\ga} \dd x\dd t = \frac{(2\pi)^{(1-\ga)/2}}{(3-\ga)\sqrt{\ga}}$ we finally have
\begin{align*}
\psi_{\ga}(y) &= - |y|^{\ga} \cdot \frac{(2\pi)^{(1-\ga)/2}\cos(\ga\pi/2)\Gamma(1-\ga)}{(3-\ga)\sqrt{\ga}} \cdot \bigg((1+c_-) - i\sgn(y)(1-c_{-})\tan\frac{\ga\pi}{2}\bigg).
\end{align*}
One can also explicitly evaluate $\psi_{\ga}(y)$ for $\ga\in[1,2)$ in a similar fashion. 
\end{proof}
	
\section*{Acknowledgements}
We are grateful to Xue-Mei Li and Pierre Le Doussal for very useful discussions that initiated this work. This work was done while the first author was a visiting Assist.~Professor at the Univ.~of Warwick. NZ also thanks Academia Sinica where part of this work was completed.

\bibliographystyle{amsplain}
\bibliography{}

\bigskip

\end{document}